\newtheorem*{theorem*}{Theorem}
\newtheorem{theorem}{Theorem}
\newtheorem{lemma}{Lemma}
\newtheorem{proposition}{Proposition}
\newtheorem{definition}{Definition}
\newtheorem{corollary}{Corollary}
\newtheorem{remark}{Remark}
\numberwithin{equation}{section}
\begin{document}

\title[Isoperimetric inequalities]{Some sharp isoperimetric-type inequalities on Riemannian manifolds}

\author{Kwok-Kun Kwong}
\address{Department of Mathematics, National Cheng Kung University, Tainan City 701, Taiwan}
\email{kwong@mail.ncku.edu.tw}

\address{Current address: School of Mathematics and Applied Statistics, University of Wollongong, NSW 2522, Australia}
\email{kwongk@uow.edu.au}

\subjclass[2010] {53C23, 49Q20}
\keywords{isoperimetric inequalities, cut distance, Toponogov theorem, isoperimetric-isodiametric inequality, cut locus}

\begin{abstract}
We prove some sharp isoperimetric type inequalities for domains with smooth boundary on Riemannian manifolds. For example, using generalized convexity, we show that among all domains with a lower bound $l$ for the cut distance and Ricci curvature lower bound $(n-1)k$, the geodesic ball of radius $l$ in the space form of curvature $k$ has the largest area-to-volume ratio. A similar but reversed inequality holds if we replace a lower bound on the cut distance by a lower bound of the mean curvature. As an application we show that $C^2$ isoperimetric domains in standard space forms are balls. Generalized convexity also provides a simple proof of Toponogov theorem. We also prove another isoperimetric inequality involving the extrinsic radius of a domain when the curvature of the ambient space is bounded above. We then extend this inequality in two directions: one involves the higher order mean curvatures, and the other involves the Hausdorff measure of the cut locus.
\end{abstract}
\maketitle
\section{Introduction}
Let $\Omega$ be a Riemannian manifold with smooth boundary. We define the area-to-volume ratio to be $\frac{|\partial \Omega|}{|\Omega|}$, where $|\partial \Omega|$ is the area of $\partial \Omega$ and $|\Omega|$ is the volume of $\Omega$. In this note, we prove a sharp isoperimetric type inequality under a lower bound of the Ricci curvature and the cut distance. More precisely, we prove that among all domains with a lower bound $l$ for the cut distance and Ricci curvature lower bound $(n-1)k$, the geodesic ball of radius $l$ in the space form of curvature $k$ has the largest area-to-volume ratio (Corollary \ref{cor: iso1}):

\begin{theorem}[Corollary \ref{cor: iso1}]\label{introthm1}
Suppose $(\Omega^n, g)$ is a complete Riemannian manifold with smooth compact boundary. Assume the Ricci curvature of $\Omega$ satisfies $\mathrm{Ric}\ge (n-1)k g$ and the cut distance of $\Omega$ satisfies $c(\Omega)= l$. Then
\begin{align*}
\frac{ \mathrm{Area}(\partial \Omega)}{\mathrm{Vol}(\Omega)}\le \frac{\mathrm{Area}(\mathbb S^{n-1}_k(l))} {\mathrm{Vol}\left(\mathbb B^n_k (l)\right)}.
\end{align*}
The equality holds if and only if $\Omega$ is isometric to $\mathbb B^n_k(l)$.
\end{theorem}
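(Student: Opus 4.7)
The plan is to set up Fermi coordinates $(y,t)$ adapted to $\partial\Omega$, in which $\Omega$ is parametrized up to a null set by the inward normal geodesics $\gamma_y(t) = \exp_y(t\nu(y))$, $y \in \partial\Omega$, $t \in [0,\tau(y))$, where $\tau(y)$ is the focal/cut distance along $\gamma_y$. The hypothesis $c(\Omega) = l$ yields $\tau(y) \ge l$ for every $y$. I would then compare the area $A(t)$ of the inner parallel hypersurface $\{d(\cdot,\partial\Omega) = t\}$ with its space-form analogue in $\mathbb{B}^{n}_{k}(l)$ and integrate using the coarea formula.

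The first step is to derive a Riccati subsolution inequality for $u(y,t) := (\Delta d)(\gamma_y(t))$. Bochner's formula applied to $d = d(\cdot,\partial\Omega)$, together with $|\nabla d| = 1$, the Cauchy--Schwarz bound $|\mathrm{Hess}(d)|^{2} \ge (\Delta d)^{2}/(n-1)$ (which uses $\mathrm{Hess}(d)(\nabla d,\cdot) = 0$), and $\mathrm{Ric} \ge (n-1)k\,g$, gives
\[
u'(y,t) \;\le\; -\frac{u(y,t)^{2}}{n-1} - (n-1)k \qquad \text{on } [0,l).
\]
The space-form model $\phi(t) := -(n-1)\,\mathrm{ct}_{k}(l-t)$ satisfies this as an equality, and $\phi(t) \to -\infty$ as $t \to l^{-}$.

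The heart of the proof, and the step I expect to be the main obstacle, is the pointwise comparison
\[
u(y,t) \;\ge\; \phi(t) \qquad \text{for all } y \in \partial\Omega,\ t \in [0,l),
\]
which is the analytic form of the ``generalized convexity'' advertised in the introduction. I would argue by contradiction: if $u(y,t_{1}) < \phi(t_{1})$ at some $t_{1} \in [0,l)$, then because $u$ is a Riccati subsolution, $u(y,t) \le \tilde\phi(t)$ on $[t_{1},l)$, where $\tilde\phi$ is the genuine Riccati solution with $\tilde\phi(t_{1}) = u(y,t_{1})$. Riccati solutions are monotone in the initial datum, and a smaller initial value at $t_{1}$ forces an earlier blow-up to $-\infty$, so $\tilde\phi$ blows up at some $t^{*} < l$, driving $u(y,t) \to -\infty$ before $t = l$. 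But a blow-up of $u$ at $t^{*} < l$ corresponds to a focal point of $\gamma_y$ at distance $t^{*}$, contradicting $\tau(y) \ge l$.

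Once $u \ge \phi$ is in hand, the remaining integration is clean. Writing $A(t) = \int_{\partial\Omega} \theta(y,t)\,dA(y)$ with $\theta$ the Jacobian of the normal exponential map, and using $\partial_{t}\log\theta = u$, I obtain $(\log A)'(t) \ge \phi(t)$. Setting $h(t) := (\mathrm{sn}_{k}(l-t)/\mathrm{sn}_{k}(l))^{n-1}$, which is the normalized area function of $\mathbb{B}^{n}_{k}(l)$ and satisfies $(\log h)' = \phi$ with $h(0) = 1$, integrating from $0$ to $t$ gives $A(t) \ge |\partial\Omega|\,h(t)$ on $[0,l]$. The coarea formula then yields
\[
\mathrm{Vol}(\Omega) \;\ge\; \int_{0}^{l} A(t)\,dt \;\ge\; |\partial\Omega|\int_{0}^{l} h(t)\,dt \;=\; |\partial\Omega|\cdot\frac{\mathrm{Vol}(\mathbb{B}^{n}_{k}(l))}{\mathrm{Area}(\mathbb{S}^{n-1}_{k}(l))},
\]
which rearranges to the stated inequality. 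For the rigidity, $u \equiv \phi$ forces both the Cauchy--Schwarz step and the Ricci bound to be saturated along every normal geodesic; standard rigidity then identifies $\Omega$ with the warped product of $\partial\Omega$ over $[0,l]$ with warping function $\mathrm{sn}_{k}(l-t)$, hence with $\mathbb{B}^{n}_{k}(l)$.
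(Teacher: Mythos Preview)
Your argument follows the same overall strategy as the paper: parametrize $\Omega$ by the normal exponential map from $\partial\Omega$, derive a differential inequality for the Jacobian from the Ricci lower bound, compare with the space-form model, and integrate. The only technical difference is in the comparison step. The paper substitutes $f_p := F^{1/(n-1)}$ to linearize the inequality to $f_p'' \le -k f_p$ with boundary data $f_p(0)=1$, $f_p(c(p))\ge 0$, and then invokes its $\mathcal{F}_k$-concavity comparison to obtain $f_p \ge \overline f_{c(p)}$; you instead keep the Riccati form for $u = \Delta d$ and argue by blow-up of the subsolution. These are equivalent packagings of the same pointwise estimate, and your version is the more classical Heintze--Karcher presentation. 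The paper's linearized formulation has the side benefit that the same $\mathcal{F}_k$-concavity lemma drives their proof of Toponogov's theorem, but for the isoperimetric inequality itself neither route is shorter.

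The one place your sketch is genuinely thinner is the rigidity. From $u \equiv \phi$ you correctly obtain umbilicity of the parallel hypersurfaces and a warped-product metric $dt^2 + \overline f_l(t)^2\, g|_{\partial\Omega}$ on $\partial\Omega \times [0,l)$, but the clause ``hence with $\mathbb{B}^n_k(l)$'' is not automatic: one must still show that $\partial\Omega$ carries the round metric of the correct radius and that the cut locus of $\partial\Omega$ collapses to a single point. The paper does not argue this by hand either; it observes that equality forces the mean curvature of $\partial\Omega$ to equal $(n-1)\lambda_k(l)$ identically and that some interior point realizes $d(\cdot,\partial\Omega) = l$, and then invokes Kasue's rigidity theorem for manifolds with boundary. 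You should cite the same result, or else supply the smooth-closing argument at the apex explicitly.
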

Here, $\mathbb B^n_{k}(l)$ and $\mathbb S^{n-1}_{k}(l)$ denotes the geodesic ball and the geodesic sphere of radius $l$ in the $n$-dimensional simply-connected space form $M_k$ of curvature $k$, respectively. In this paper, $n=\mathrm{dim}(\Omega)$ is assumed to be at least two.

The above result can be compared to the Levy-Gromov isoperimetric inequality \cite{G}, which states that
\begin{theorem}[Levy-Gromov isoperimetric inequality]
Suppose $(M^n, g)$ is a closed manifold which has Ricci curvature $\mathrm{Ric}\ge (n-1)k$, $k>0$. Assume $\Omega$ is a domain in $M$ which has a smooth boundary $\partial \Omega$ and let $B$ be a geodesic ball in $M_k=\mathbb S^n (1 /\sqrt{k})$, the $n$-sphere of radius $\frac{1}{\sqrt{k}}$ in $\mathbb R^{n+1}$ (which has curvature $k$), such that
$\frac{\mathrm{Vol}(\Omega)}{\mathrm{Vol}(M)}
=\frac{\mathrm{Vol}(B)}{\mathrm{Vol} (M_k)}$.
Then
\begin{align}\label{ineq: LG}
\frac{\mathrm{Area}(\partial \Omega)}{\mathrm{Vol}(M)}
\ge\frac{\mathrm{Area}(\partial B)}{\mathrm{Vol} (M_k)}.
\end{align}
\end{theorem}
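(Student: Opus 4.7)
The plan is to follow Gromov's classical argument via the Heintze--Karcher tube comparison. I would first reduce to the case of a smooth isoperimetric minimizer: since $M$ is compact, an isoperimetric minimizer of volume $\mathrm{Vol}(\Omega)$ exists by the direct method, and the Gonzalez--Massari--Tamanini regularity theorem shows its boundary is a smooth constant-mean-curvature hypersurface off a singular set of Hausdorff dimension at most $n-8$ (which is negligible for the perimeter/volume arithmetic below). Replacing $\Omega$ by $M\setminus\Omega$ if necessary, I may also assume $\mathrm{Vol}(\Omega)\le\mathrm{Vol}(M)/2$, so that the prescribed ball satisfies $\mathrm{Vol}(B)\le\mathrm{Vol}(M_k)/2$ and in particular its radius $r_0\le \pi/(2\sqrt{k})$.

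Next, I choose $r\in(0,\pi/\sqrt{k})$ so that the model sphere $\partial\mathbb{B}^n_k(r)\subset M_k$ has mean curvature exactly $H$. Along each geodesic leaving $\partial \Omega$ orthogonally, the shape operator satisfies a Riccati inequality driven by $\mathrm{Ric}\ge(n-1)kg$, giving a pointwise upper bound on the Jacobian of the normal exponential map by the model Jacobian of the corresponding evolution from $\partial \mathbb{B}^n_k(r)$. Integrating these bounds up to the first cut time, and noting that the two normal tubes exhaust $\Omega$ and $M\setminus\Omega$ respectively, yields the Heintze--Karcher estimates
\begin{align*}
\frac{\mathrm{Vol}(\Omega)}{\mathrm{Area}(\partial \Omega)}\le\frac{\mathrm{Vol}(\mathbb{B}^n_k(r))}{\mathrm{Area}(\partial \mathbb{B}^n_k(r))},\qquad \frac{\mathrm{Vol}(M\setminus\Omega)}{\mathrm{Area}(\partial \Omega)}\le\frac{\mathrm{Vol}(M_k)-\mathrm{Vol}(\mathbb{B}^n_k(r))}{\mathrm{Area}(\partial \mathbb{B}^n_k(r))}.
\end{align*}
Adding these gives $\mathrm{Area}(\partial\Omega)/\mathrm{Vol}(M)\ge \mathrm{Area}(\partial\mathbb{B}^n_k(r))/\mathrm{Vol}(M_k)$.

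To match the auxiliary radius $r$ with the prescribed one $r_0$, I would divide the two tube estimates to obtain $\mathrm{Vol}(\mathbb{B}^n_k(r))\ge\mathrm{Vol}(B)$, hence $r\ge r_0$. The desired inequality $\mathrm{Area}(\partial \mathbb{B}^n_k(r))\ge \mathrm{Area}(\partial B)$ then follows from the model-space fact that $r\mapsto\mathrm{Area}(\partial\mathbb{B}^n_k(r))=C_n\sin^{n-1}(\sqrt{k}\,r)/k^{(n-1)/2}$ is symmetric and unimodal about $r=\pi/(2\sqrt{k})$, combined with the bound $r_0\le\pi/(2\sqrt{k})$ from the WLOG assumption and a short argument (using the classical isoperimetric inequality on $M_k$ itself) to control where $r$ lies in $[r_0,\pi/\sqrt{k})$.

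The main obstacle is the sharp Heintze--Karcher inequality used in the tube step: its proof requires the Riccati/Jacobi comparison under the Ricci lower bound together with a careful treatment of the cut locus of $\partial \Omega$, so that integration is cut off at the appropriate time and the resulting estimate is tight. Two secondary technical points are the regularity reduction in the first step (a nontrivial input from geometric measure theory) and the matching argument in the last step, which ultimately hinges on the fact (proved separately by Schwarz symmetrization on $M_k=\mathbb{S}^n$) that geodesic balls realize the isoperimetric profile in the model sphere.
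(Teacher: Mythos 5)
The paper does not prove this statement: it is quoted as background (the classical L\'evy--Gromov inequality, cited to Gromov \cite{G}) to contrast with the new results. So there is no ``paper proof'' to compare against; I can only assess your proposal on its own merits.

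Your reduction to a smooth-up-to-codimension-$8$ isoperimetric minimizer, the WLOG assumption $\mathrm{Vol}(\Omega)\le\mathrm{Vol}(M)/2$, the two Heintze--Karcher tube estimates for $\Omega$ and $M\setminus\Omega$ with the auxiliary radius $r$ determined by matching the constant mean curvature $H$, and the addition step giving
\begin{align*}
\frac{\mathrm{Area}(\partial\Omega)}{\mathrm{Vol}(M)}\;\ge\;\frac{\mathrm{Area}(\partial\mathbb B^n_k(r))}{\mathrm{Vol}(M_k)}
\end{align*}
are all standard and essentially correct (modulo the known technical care needed to run the tube argument off the singular set).

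The genuine gap is your matching step. You claim that ``dividing the two tube estimates'' gives $\mathrm{Vol}(\mathbb B^n_k(r))\ge\mathrm{Vol}(B)$, hence $r\ge r_0$. This is not a valid manipulation: from
\begin{align*}
\frac{\mathrm{Vol}(\Omega)}{\mathrm{Area}(\partial\Omega)}\le\frac{\mathrm{Vol}(\mathbb B^n_k(r))}{\mathrm{Area}(\partial\mathbb B^n_k(r))}\quad\text{and}\quad\frac{\mathrm{Vol}(M\setminus\Omega)}{\mathrm{Area}(\partial\Omega)}\le\frac{\mathrm{Vol}(M_k)-\mathrm{Vol}(\mathbb B^n_k(r))}{\mathrm{Area}(\partial\mathbb B^n_k(r))}
\end{align*}
one cannot divide to compare the ratios $\mathrm{Vol}(\Omega)/\mathrm{Vol}(M\setminus\Omega)$ and $\mathrm{Vol}(\mathbb B^n_k(r))/(\mathrm{Vol}(M_k)-\mathrm{Vol}(\mathbb B^n_k(r)))$; two inequalities with the same denominators give no information about the ratio of their numerators. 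In fact, if that division were valid, applying it symmetrically to $M\setminus\Omega$ would force $r\le r_0$ as well, hence $r=r_0$, which cannot be deduced outside the rigidity case. The subsequent ``short argument'' you sketch to locate $r$ in $[r_0,\pi/\sqrt k)$ inherits the same problem and the classical isoperimetric inequality on $M_k$ does not supply the missing comparison.

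The matching step can be repaired without ever establishing $r\ge r_0$. Write $A_k(r)=\mathrm{Area}(\partial\mathbb B^n_k(r))$ and $h_k(r)=\mathrm{Vol}(\mathbb B^n_k(r))/A_k(r)$, which (as in the paper's Lemma~\ref{lem: mono}) is increasing. If $r<r_0$, then $h_k(r)<h_k(r_0)$ and the first HK estimate alone gives $\mathrm{Area}(\partial\Omega)>\mathrm{Vol}(\Omega)/h_k(r_0)=\mathrm{Vol}(M)\,A_k(r_0)/\mathrm{Vol}(M_k)$; the symmetric argument for $M\setminus\Omega$ handles the case $\pi/\sqrt k-r<\pi/\sqrt k-r_0$, i.e.\ $r>\pi/\sqrt k-r_0$. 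In the remaining case $r_0\le r\le\pi/\sqrt k-r_0$, the unimodality of $A_k$ about $\pi/(2\sqrt k)$ gives $A_k(r)\ge A_k(r_0)$ (using $r_0\le\pi/(2\sqrt k)$), and then the additive estimate finishes. This case analysis, rather than a division of inequalities, is what makes the argument close; without it, your proof as written has a hole precisely at its conclusion.
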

There are two features of our result. One is that unlike the Levy-Gromov isoperimetric inequality, we allow the Ricci curvature lower bound to be arbitrary (not necessarily positive) and still get a sharp inequality. The second one, which seems more interesting, is that we provide a lower bound for the volume instead of an upper bound (of course the bound cannot depend only on the boundary area, but also on its cut distance). This is in contrast with the classical isoperimetric inequality, the Levy-Gromov isoperimetric inequality and the Bishop-Gromov volume comparison, all of which provide (roughly speaking) an upper bound of the volume of a domain either in terms of its boundary area, or in terms of the volume of its counterpart in the comparison model space.
More precisely, note that \eqref{ineq: LG} can be written as $\frac{\mathrm{Area}(\partial \Omega)}{\mathrm{Vol}(\Omega)} \ge\frac{\mathrm{Area}(\partial B)}{\mathrm{Vol} (B)}$. Comparing this with the Theorem \ref{introthm1}, the inequality sign is ``reversed''.
The proof of Theorem \ref{introthm1} makes use of a generalized notion of convexity, called $\mathcal F_k$-convexity. Interestingly, $\mathcal F_k$-convexity leads to a simple proof of Toponogov theorem (Theorem \ref{topo}). The Laplacian comparison theorem (Theorem \ref{thm: vol c} \eqref{mc comp} when $\Omega=B(p, r)$), Bonnet-Myers' theorem (Theorem \ref{BM}), Bishop-Gromov volume comparison theorem (Corollary \ref{cor: BG}) and a comparison result of Li \cite{li2014sharp} and Ge \cite{ge2015comparison} (Corollary \ref{li ge}) also follow as corollaries of our main result.

We also prove a sharp Heintze-Karcher-Ros type inequality which is closely related to Theorem \ref{introthm1}:
\begin{theorem}[Theorem \ref{v h}]\label{introthm1.5}
Let $\Omega^n$ be a compact Riemannian manifold with $C^2$ boundary and $\mathrm{Ric}\ge (n-1)kg$ on $\Omega$. If $k\le 0$, assume the normalized mean curvature $H_1>\sqrt{-k}$ on $\partial \Omega$. Then
\begin{align}\label{ros'}
\mathrm{Vol}(\Omega)
\le&\int_{\partial \Omega} h_k(l_k(H_1))dS.
\end{align}
The equality holds if and only if $\Omega$ is isometric to $\mathbb B^n_k(l)$ for some $l$.
\end{theorem}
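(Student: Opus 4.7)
The natural approach is the Heintze--Karcher method applied from $\partial\Omega$. The plan is to foliate $\Omega$ by the normal exponential map $E(y,t)=\exp_y(t\nu(y))$, where $\nu$ is the inward unit normal of $\partial\Omega$, compute the Jacobian of $E$ using the Ricci lower bound, and integrate. Denoting by $c(y)$ the boundary cut distance along $\gamma_y(t)=E(y,t)$ and by $J(t,y)$ the Jacobian of $E$ at $(y,t)$, standard cut locus theory gives
\begin{equation*}
\mathrm{Vol}(\Omega)=\int_{\partial\Omega}\int_0^{c(y)}J(t,y)\,dt\,dS(y).
\end{equation*}

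Along each radial geodesic $\gamma_y$, set $\psi(t)=J(t,y)^{1/(n-1)}$. The Riccati equation for the shape operator of the $t$-equidistant hypersurfaces, combined with Cauchy--Schwarz applied to the principal curvatures and the hypothesis $\mathrm{Ric}\ge(n-1)kg$, yields
\begin{equation*}
\psi''+k\psi\le 0,\qquad \psi(0)=1,\qquad \psi'(0)=-H_1(y).
\end{equation*}
Sturm comparison then gives $\psi(t)\le C_k(t)-S_k(t)H_1(y)$ on the maximal interval where the right-hand side is nonnegative, which by the defining relation $C_k(l_k(H))/S_k(l_k(H))=H$ is precisely $[0,l_k(H_1(y))]$; simultaneously $c(y)\le l_k(H_1(y))$, since a focal point of $\partial\Omega$ cannot occur later than the first zero of the comparison Jacobi field. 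The assumption $H_1>\sqrt{-k}$ when $k\le 0$ is exactly what guarantees $l_k(H_1)$ is finite.

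Using the space-form identity $C_k(t)-S_k(t)H_1=S_k(l_k(H_1)-t)/S_k(l_k(H_1))$ and changing variables $u=l_k(H_1)-t$ in the inner integral gives
\begin{equation*}
\mathrm{Vol}(\Omega)\le\int_{\partial\Omega}\frac{1}{S_k(l_k(H_1(y)))^{n-1}}\int_0^{l_k(H_1(y))}S_k(u)^{n-1}\,du\,dS(y)=\int_{\partial\Omega}h_k(l_k(H_1(y)))\,dS(y),
\end{equation*}
which is \eqref{ros'}.

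The main obstacle will be the rigidity. Equality in \eqref{ros'} forces pointwise equality in every step: $c(y)=l_k(H_1(y))$ on $\partial\Omega$, saturation of Sturm comparison giving $\psi''+k\psi=0$ on each $\gamma_y$, saturation of Cauchy--Schwarz implying that the $t$-equidistant hypersurfaces are all totally umbilical, and saturation of the radial Ricci bound. The full Riccati equation $A'+A^2+R_\gamma=0$ then yields $R(X,\gamma')\gamma'=kX$ for $X\perp\gamma'$, so the metric on $\Omega$ is a warped product $dt^2+(S_k(l-t)/S_k(l))^2 g_{\partial\Omega}$ with $l=l_k(H_1)$ constant on $\partial\Omega$ by continuity and connectedness. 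A Gauss-equation computation then upgrades umbilicity together with the radial curvature identity to constant sectional curvature $k$ throughout, giving the isometry with $\mathbb B^n_k(l)$.
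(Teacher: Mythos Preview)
Your derivation of the inequality is correct and matches the paper's argument.

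The rigidity argument has a genuine gap. The equality analysis yields $c(y)=l_k(H_1(y))$, umbilicity of the equidistants, and $R(\cdot,\gamma')\gamma'=k\,\mathrm{Id}$, hence a metric of the form $dt^2+\phi_y(t)^2\,g_{\partial\Omega}$ with $\phi_y(t)=c_k(t)-H_1(y)s_k(t)$. But $H_1$ may still vary over $\partial\Omega$, and ``continuity and connectedness'' does not force it to be constant---it only forces its range to be an interval. Without $H_1$ constant there is no single warping function, so the subsequent step cannot even be formulated. Moreover, even granting $l$ constant, radial sectional curvature $k$ does not yield full sectional curvature $k$ via the Gauss equation: the tangential curvatures of a warped product $dt^2+\phi(t)^2 g_{\partial\Omega}$ involve the intrinsic curvature of $(\partial\Omega,g_{\partial\Omega})$, which is not yet controlled. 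A correct route at that stage would be to observe that $\phi(l)=0$ collapses $\partial\Omega$ to a point and that smoothness there forces $g_{\partial\Omega}$ to be round, but you have not supplied this.

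The paper handles rigidity quite differently. For $n>2$ it invokes a lemma of Heintze--Karcher to conclude that $H_1$ is constant, then reduces to the equality case of Corollary~\ref{cor: iso1} (ultimately Kasue's rigidity theorem). For $n=2$ it argues case by case in the sign of $k$: Gauss--Bonnet converts the equality into a total-curvature identity for $\partial\Omega$ regarded as a space curve in $\mathbb{R}^3$ or $\mathbb{R}^{2,1}$, and Fenchel's theorem (or its Lorentzian analogue) forces $\partial\Omega$ to be a geodesic circle. The two-dimensional case genuinely resists your approach: umbilicity is vacuous and the radial curvature condition only says $\Omega$ has constant curvature $k$, so one still needs an independent argument that $\kappa$ is constant on $\partial\Omega$.
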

The functions $l_k$ and $h_k$ are explicit. Indeed $l_k(\lambda) $ is the radius of the sphere in $M_k$ which has principal curvature $\lambda$ and $h_k$ is the function which makes \eqref{ros'} an equality when $\Omega$ is a geodesic ball in $M_k$. In particular, this implies that among all compact domains $\Omega$ with $\mathrm{Ric}\ge (n-1)k$ and with $H_1\ge \lambda_k(l)$ on the boundary, $\mathbb B^n_k(l)$ is the unique maximizer of $\frac{|\Omega|}{|\partial \Omega|}$ (Corollary \ref{maximizer}). Using Theorem \ref{introthm1.5}, we give a proof that the $C^2$ isoperimetric domains in $M_k$ are geodesic balls (Theorem \ref{isop domain}).

Next, we prove an isoperimetric-isodiametric inequality. Let $M$ be a complete Riemannian manifold (without boundary). For $\Omega\subset M$, the extrinsic radius is $\mathrm{rad}_M(\Omega) :=\inf\{r>0: \Omega\subset B(x_0, r) \textrm{ for some } x_0\in M\}$, where $B(x_0, r)$ is the geodesic ball centered at $x_0$ with radius $r$ in $M$. We can prove the following result.
\begin{theorem}[Theorem \ref{thm: iso2}]\label{introthm2}
Let $M$ be a complete Riemannian manifold (without boundary). Suppose $(\Omega^n, g)$ is a domain in $M$ with compact piecewise $C^1$ boundary and with $\mathrm{rad}_M(\Omega)<\infty$. Suppose the sectional curvature of $M$ is bounded above by $k$. Let $\mathrm{rad}_M(\Omega)=L$, which is assumed to be $\le \frac{\pi}{\sqrt{k}}$ if $k>0$. Then
\begin{align*}
\frac{ \mathrm{Area}(\partial \Omega)}{\mathrm{Vol}(\Omega)}\ge \frac{\mathrm{Area}(\mathbb S^{n-1}_k(L))} {\mathrm{Vol}\left(\mathbb B^n_k (L)\right)}.
\end{align*}
The equality holds if and only if $\Omega$ is isometric to $\mathbb B^n_k(L)$.
\end{theorem}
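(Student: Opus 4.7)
The plan is to exhibit a vector field $X$ on $\Omega$ with $\mathrm{div}\,X \ge 1$ pointwise and $\langle X,\nu\rangle \le \phi(L)$ on $\partial\Omega$, where $\phi(L)$ is the model volume-to-area ratio of $\mathbb B^n_k(L)$. The divergence theorem will then give $|\Omega| \le \phi(L)\,|\partial\Omega|$, which is equivalent to the claimed inequality since $1/\phi(L) = \mathrm{Area}(\mathbb S^{n-1}_k(L))/\mathrm{Vol}(\mathbb B^n_k(L))$.

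\emph{Setup and the model function.} Using $\mathrm{rad}_M(\Omega) = L$, fix a point $x_0\in M$ with $\Omega \subseteq \overline{B(x_0,L)}$ (existence by continuity and properness of the enclosing function on the compact $\Omega$), and let $r(x) := d(x_0,x)$. The hypothesis $L \le \pi/\sqrt{k}$ (for $k>0$) places us strictly inside the Rauch conjugate radius at $x_0$. Introduce
\[
\phi(t) \;:=\; \frac{\int_0^t \mathrm{sn}_k(s)^{n-1}\,ds}{\mathrm{sn}_k(t)^{n-1}} \;=\; \frac{\mathrm{Vol}(\mathbb B^n_k(t))}{\mathrm{Area}(\mathbb S^{n-1}_k(t))},
\]
which is the unique solution with $\phi(0)=0$ of the linear ODE $\phi'(t) + (n-1)\,\mathrm{ct}_k(t)\,\phi(t) = 1$. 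Using the identity $\mathrm{ct}_k' = -k - \mathrm{ct}_k^2 < 0$, any interior critical point $t_0$ of $\phi$ would satisfy $\phi''(t_0) = -(n-1)\mathrm{ct}_k'(t_0)\phi(t_0) > 0$, i.e.\ be a strict local minimum. This is incompatible with $\phi(0)=0$ and $\phi > 0$ on $(0,L]$, so $\phi$ is strictly increasing on $(0,L]$.

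\emph{Divergence inequality.} On the smooth locus $\Omega^* := \Omega \setminus (\{x_0\} \cup \mathrm{Cut}(x_0))$, the Hessian comparison theorem for $\mathrm{sec}\le k$ (within the conjugate radius) yields $\Delta r \ge (n-1)\,\mathrm{ct}_k(r)$, whence
\[
\mathrm{div}(\phi(r)\nabla r) \;=\; \phi'(r) + \phi(r)\Delta r \;\ge\; \phi'(r) + (n-1)\,\mathrm{ct}_k(r)\,\phi(r) \;=\; 1.
\]
Integrating and applying the divergence theorem (with the cut-locus caveat below), and using $|\langle\nabla r,\nu\rangle|\le 1$ together with $\phi(r)\le\phi(L)$ on $\partial\Omega$,
\[
|\Omega| \;\le\; \int_{\partial\Omega} \phi(r)\,\langle\nabla r,\nu\rangle\,dS \;\le\; \phi(L)\,|\partial\Omega|,
\]
which is the claimed inequality. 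For rigidity, equality forces $r\equiv L$ on $\partial\Omega$, $\nabla r = \nu$ on $\partial\Omega$ (so $\partial\Omega = S(x_0,L)$), and pointwise equality $\mathrm{Hess}\,r = \mathrm{ct}_k(r)(g - dr\otimes dr)$ throughout $\Omega$. This pins every sectional curvature of a $2$-plane containing $\partial_r$ to equal $k$, and a standard Jacobi-field argument integrates the infinitesimal rigidity to an isometry $(\Omega,g)\cong\mathbb B^n_k(L)$.

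\emph{Main obstacle.} The essential technicality is the application of the divergence theorem in the presence of $\mathrm{Cut}(x_0)\cap\Omega$: since the hypotheses give no injectivity lower bound, the cut locus can meet $\Omega$ on a set of positive $(n-1)$-dimensional Hausdorff measure, and there $\phi(r)\nabla r$ is only Lipschitz. The standard remedy is to excise an $\varepsilon$-tube $N_\varepsilon(\mathrm{Cut}(x_0))$, apply the divergence theorem on $\Omega\setminus N_\varepsilon$, track the boundary contribution from $\partial N_\varepsilon$, and send $\varepsilon\to 0$. Justifying that this limit procedure closes up correctly, with the cut-locus terms contributing with the right sign, is the crux of the proof; it is exactly this quantitative cut-locus control that the paper later sharpens in its extension involving the Hausdorff measure of the cut locus.
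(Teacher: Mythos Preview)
Your core argument is the paper's own: the vector field $X = h_k(r)\,\partial_r$, the estimate $\mathrm{div}\,X \ge 1$ from Hessian comparison, the divergence theorem, and the monotonicity of $h_k$ to pass from $\int_{\partial\Omega} h_k(r)\,dS$ to $h_k(L)\,|\partial\Omega|$. The rigidity analysis is likewise the same.

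Where you diverge is in flagging the cut locus as the ``main obstacle''. In the paper's conventions there is no such obstacle: $\mathrm{rad}_M(\Omega)$ is defined via \emph{geodesic} balls $B(x_0,r)$, meaning balls with $r$ inside the injectivity radius at $x_0$; the remark immediately after Theorem~\ref{thm: iso2} explicitly contrasts this with metric balls $\mathcal B(x_0,r)$ and gives a flat-torus counterexample showing the statement fails for metric balls. Thus the hypothesis $\mathrm{rad}_M(\Omega)<\infty$ already forces $\mathrm{Cut}(x_0)\cap\overline\Omega=\emptyset$, so $X$ is smooth on $\overline\Omega$ and the divergence theorem applies directly---the paper simply notes ``$X$ is smooth on $\Omega$'' and moves on.

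Your hope that in a metric-ball setting ``the cut-locus terms contribute with the right sign'' is also misplaced. On the excision boundary $\partial E_\varepsilon$ the outward normal is essentially $\partial_r$ from both sides of the cut locus, so $\langle X,\nu\rangle>0$ there and those terms \emph{add} to the upper bound on $|\Omega|$ rather than being discardable. This is exactly the extra $2h_k(l)\,\mathcal H^{n-1}(\mathrm{Cut}(x_0)\cap\overline\Omega)$ appearing in Theorem~\ref{thm: cut}, and it is why the present theorem genuinely requires geodesic balls rather than metric balls.
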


As an application, we give a lower bound of the Cheeger's constant (Corollary \ref{cheeger}). Combining Theorem \ref{introthm1} and Theorem \ref{introthm2}, we also prove a version of Santal{\'o}-Ya{\~n}ez theorem \cite{average} for family of domains in the hyperbolic space and the Euclidean space, with assumptions on the cut distance instead of convexity of the boundary (Corollary \ref{SY}).

Not all domains are contained in a geodesic ball.
If we replace the geodesic balls by metric balls in the above definition, we obtain an isoperimetric inequality on the area ($n-1$ dimensional Hausdorff measure) of the cut locus instead.
\begin{theorem}[Corollary \ref{cor cut}]\label{introthm3}
Suppose $M$ is an $n$-dimensional closed (compact without boundary) manifold and its curvature is bounded from above by $k$. Let $x_0\in M$. If $k>0$, we further assume that $\mathrm{rad}(x_0) <\frac{\pi}{\sqrt{k}}$. Then
\begin{align*}
\frac{\mathrm{Vol}(M)}{2 \mathcal H^{n-1}(\mathrm{Cut}(x_0)) }\le \frac{ \mathrm{Vol}(\mathbb B^n_k(L)) }{ \mathrm{Area}(\mathbb S^{n-1}_k(L)) }
\end{align*}
where $L=\mathrm{rad}(x_0)$.
\end{theorem}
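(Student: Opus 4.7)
The plan is to lift the problem to the tangent space via the exponential map and then apply Theorem \ref{introthm2}. Let $D\subset T_{x_0}M$ denote the open segment domain $\{tv:v\in S_{x_0}M,\ 0\le t<c(v)\}$, where $c$ is the cut function on the unit tangent sphere (locally Lipschitz, finite everywhere since $M$ is compact). Equip $D$ with the pulled-back metric $\tilde g:=\exp_{x_0}^{*}g$. Then $\exp_{x_0}:(D,\tilde g)\to (M\setminus\mathrm{Cut}(x_0),g)$ is an isometry, so $(D,\tilde g)$ inherits the upper sectional curvature bound $k$, and $\mathrm{Vol}(D,\tilde g)=\mathrm{Vol}(M)$ because the cut locus has measure zero. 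Every $v\in D$ is joined to the origin by the Euclidean ray $t\mapsto tv$, which is a minimizing $\tilde g$-geodesic of length $|v|\le L$, so $D$ lies in the $\tilde g$-geodesic ball of radius $L$ around $0$ and has extrinsic radius $\le L$.

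Next, I would apply Theorem \ref{introthm2} to $(D,\tilde g)$. Since $\partial D$ is merely Lipschitz (it is the radial graph of $c$), to fit the piecewise $C^{1}$ hypothesis I would first exhaust $D$ from inside by smooth star-shaped domains $D_{\varepsilon}$ defined using a mollification of $c-\varepsilon$, apply Theorem \ref{introthm2} to each $D_{\varepsilon}$, and pass to the limit, using monotone convergence for $\mathrm{Vol}(D_{\varepsilon},\tilde g)\to\mathrm{Vol}(D,\tilde g)$ and the coarea/area formula for $\mathrm{Area}(\partial D_{\varepsilon},\tilde g)\to\mathrm{Area}(\partial D,\tilde g)$. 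This yields
\[
\frac{\mathrm{Area}(\partial D,\tilde g)}{\mathrm{Vol}(M)}\ \ge\ \frac{\mathrm{Area}(\mathbb S^{n-1}_k(L))}{\mathrm{Vol}(\mathbb B^n_k(L))}.
\]

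The last step is to identify $\mathrm{Area}(\partial D,\tilde g)$ with $2\mathcal H^{n-1}(\mathrm{Cut}(x_0))$. The map $\exp_{x_0}$ extends Lipschitz-continuously to $\overline D$ and sends $\partial D$ onto $\mathrm{Cut}(x_0)$. I would invoke the standard structural fact (Buchner, Itoh--Tanaka) that for $\mathcal H^{n-1}$-a.e.\ $q\in\mathrm{Cut}(x_0)$ there are exactly two distinct minimizing geodesics from $x_0$ to $q$ and neither endpoint is conjugate, so $\exp_{x_0}|_{\partial D}$ is a local isometry from $(\partial D,\tilde g)$ onto $\mathrm{Cut}(x_0)$ near each of the two preimages. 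The area formula then gives
\[
\mathrm{Area}(\partial D,\tilde g)\ =\ \int_{\mathrm{Cut}(x_0)}\#\bigl(\exp_{x_0}^{-1}(q)\cap\partial D\bigr)\,d\mathcal H^{n-1}(q)\ =\ 2\mathcal H^{n-1}(\mathrm{Cut}(x_0)),
\]
and combining with the previous display finishes the proof. The main obstacle is the poor regularity of $\partial D$ and $\mathrm{Cut}(x_0)$: one must justify both the approximation step so that Theorem \ref{introthm2} applies despite the merely Lipschitz boundary, and the structural claim that the set of cut points of multiplicity other than two has vanishing $(n-1)$-Hausdorff measure. Both are standard but technical, and represent the bulk of the needed work.
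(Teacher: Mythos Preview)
Your approach is correct and essentially the same as the paper's: the paper proves the more general Theorem \ref{thm: cut} by exhausting $E_{x_0}=M\setminus\mathrm{Cut}(x_0)$ by star-shaped piecewise $C^1$ domains $E_\varepsilon$, running the divergence-theorem computation with $X=h_k(r)\partial_r$ on $\Omega\cap E_\varepsilon$, and then invoking the Itoh--Tanaka / Barden--Le structure of the cut locus to get the factor $2$ in the limit, which is exactly your plan transported via $\exp_{x_0}$ to the tangent space. One small caveat: you cannot quite invoke Theorem \ref{introthm2} as a black box because $(D,\tilde g)$ is not complete and $D_\varepsilon$ need not sit inside a ``geodesic ball'' in the paper's sense; what you really need (and what the paper does in Theorem \ref{thm: cut}) is to re-run the divergence computation \eqref{ineq: v hk} directly on $D_\varepsilon$, which is legitimate since $r$ is smooth there and the Hessian comparison applies.
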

The rest of this paper is organized as follows. In Section \ref{sec1}, we prove Theorem \ref{introthm1}. In order to prove Theorem \ref{introthm1}, we introduced the concept of generalized convexity in Subsection \ref{sec convex}. As an application, we give a simple proof of Toponogov theorem in Subsection \ref{toponogov}. Theorem \ref{introthm1} is then proved in Subsection \ref{main}. In some cases, we can replace the assumptions on the cut distance with assumptions on the curvature of the boundary and such results are presented in Subsection \ref{alt}. In Section \ref{hk ineq}, we prove Theorem \ref{introthm1.5} and we apply it to show that $C^2$ isoperimetric domains in simply connected space forms are geodesic balls. In Subsections \ref{extrin}, \ref{higher}, Theorem \ref{introthm2} together with their generalizations to higher order mean curvatures are proved. In Subsection \ref{cut}, Theorem \ref{introthm2} is generalized to the case where the domain is not necessarily contained in a geodesic ball, and in particular Theorem \ref{introthm3} is proved.

{\sc Acknowledgments}:
Part of this research is inspired by a question of Pengzi Miao. We would like to thank him for encouragement.
We would also like to thank Kostiantyn Drach, Hojoo Lee, Liren Lin, Mu-Tao Wang and Ye-Kai Wang for discussions. The research of the author is partially supported by Ministry of Science and Technology in Taiwan under grant MOST 106-2115-M-006-017-MY2.

\section{Isoperimetric inequalities involving the cut distance}\label{sec1}
\subsection{Generalized convex functions}\label{sec convex}
In order to prove our main results, we need some knowledge of the theory of generalized convex functions. We first fix the notation. Let $k\in \mathbb R$ be fixed throughout this note. Let $\lambda\in \mathbb R$ such that
\begin{align}\label{eq: lambda}
\lambda>
\begin{cases}
0 &\textrm{ if }k=0\\
-\infty &\textrm{ if }k>0\\
\sqrt{-k}&\textrm{ if }k<0
\end{cases}
\end{align}
and let
\begin{equation}\label{eq: l}
l=l_k(\lambda)=
\begin{cases}
\frac{1}{\lambda}\quad &\textrm{if }k=0\\
\frac{1}{\sqrt k} \cot^{-1}\left(\frac{\lambda}{\sqrt k}\right)\quad &\textrm{if }k>0\\
\frac{1}{\sqrt {-k}} \coth^{-1}\left(\frac{\lambda}{\sqrt {-k}}\right)\quad &\textrm{if }k<0.
\end{cases}
\end{equation}
This is equivalent to
\begin{equation}\label{eq: lambda2}
\lambda=\lambda_k(l)=
\begin{cases}
\frac{1}{l}\quad &\textrm{if }k=0\\
\sqrt{k}\cot\left(\sqrt{k}l\right) \quad &\textrm{if }k>0\\
\sqrt {-k} \coth \left({\sqrt {-k}}l\right) \quad &\textrm{if }k<0.
\end{cases}
\end{equation}
We also define $\sigma_{k, \lambda}(t)$ to be the solution of
\begin{align*}
\sigma''(t)+k \sigma(t)=0, \sigma(0)=1, \sigma'(0)=-\lambda.
\end{align*}
More explicitly,
\begin{equation}\label{f bar}
\sigma_{k, \lambda}(t)
=\begin{cases}
1-\lambda t \quad &\textrm{if }k=0\\
\cos \left(\sqrt k t\right)-\frac{\lambda }{\sqrt k} \sin \left(\sqrt k t \right) \quad &\textrm{if }k>0\\
\cosh \left(\sqrt {-k}t\right) -\frac{\lambda }{\sqrt {-k}}\sinh \left(\sqrt {-k} t\right) \quad &\textrm{if }k<0.
\end{cases}
\end{equation}

The geometric meaning of $l$ and $\lambda$ are as follows. In the $n$-dimensional simply-connected space form $M_k$ with curvature $k$, the geodesic sphere with radius $l$, $\mathbb S^{n-1}_k(l)$, is umbilical with principal curvatures equal to $\lambda$.

\begin{proposition}\label{prop: 1}
\begin{enumerate}
\item
Suppose $\lambda$ and $l$ be defined by \eqref{eq: lambda} and
\eqref{eq: l} respectively.
Assume $f, \overline f:[0, l]\to \mathbb R$ satisfy
\begin{align*}
\begin{cases}
&f''\le -kf\textrm{ on }[0, l], \quad
f(0)=1, \quad f(l)\ge 0\\
&{\overline f}'' =-k \overline f\textrm{ on }[0, l], \quad
\overline f(0)=1, \quad \overline f(l)= 0.
\end{cases}
\end{align*}
Then $f(t)\ge \overline f(t)$ on $[0, l]$.
\item
If $k\le 0$ and $f:[0, \infty)\to \mathbb R$ satisfies
\begin{align*}
f''\le -k f, \quad f(0)=1, \quad f(t)>0
\end{align*}
Then $f(t)\ge \overline f(t)$, where
$
\overline f(t)
=
\begin{cases}
1\quad &\textrm{if }k=0\\
e^{-\sqrt{-k}t}\quad &\textrm{if }k<0.
\end{cases}
$

\end{enumerate}
\end{proposition}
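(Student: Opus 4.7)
The strategy is to compare $f$ with the unique model solution $\overline{f}$ (which is just $\sigma_{k,\lambda}$) through a Wronskian-type quantity that sidesteps the potential vanishing of $f$ in the interior.

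For part (1), I would set $W(t) = f(t)\overline{f}'(t) - f'(t)\overline{f}(t)$ and use $\overline{f}'' = -k\overline{f}$ to compute
\begin{equation*}
W'(t) = f\overline{f}'' - f''\overline{f} = -\overline{f}(f'' + kf).
\end{equation*}
The hypothesis $f''+kf \le 0$ together with the non-negativity of $\overline{f}$ on $[0,l]$ gives $W' \ge 0$. The non-negativity of $\overline{f}$ follows from rewriting it in closed form and invoking \eqref{eq: l}: namely $\overline{f}(t) = 1 - t/l$ when $k=0$; $\overline{f}(t) = \sin(\sqrt{k}(l-t))/\sin(\sqrt{k}l)$ when $k>0$, non-negative because \eqref{eq: l} forces $l \in (0, \pi/\sqrt{k})$; and $\overline{f}(t) = \sinh(\sqrt{-k}(l-t))/\sinh(\sqrt{-k}l)$ when $k<0$. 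Evaluating at $t=l$, where $\overline{f}(l)=0$, gives $W(l) = f(l)\overline{f}'(l)$; since $\overline{f}'(l)\le 0$ (a right-endpoint zero approached from positive values) and $f(l) \ge 0$, we obtain $W(l) \le 0$, and therefore $W \le 0$ throughout $[0,l]$.

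On the open interval $(0,l)$ where $\overline{f}>0$, dividing by $\overline{f}^2$ gives $(f/\overline{f})' = -W/\overline{f}^2 \ge 0$, so $f/\overline{f}$ is non-decreasing with $\lim_{t\to 0^+} f/\overline{f} = 1$; hence $f \ge \overline{f}$ on $(0,l)$, and equality at the two endpoints is immediate from the hypotheses. For part (2), I would apply part (1) on $[0,L]$ for arbitrarily large $L$ and pass to the limit. When $k<0$, let $\overline{f}_L(t) = \sinh(\sqrt{-k}(L-t))/\sinh(\sqrt{-k}L)$; this solves the model ODE with $\overline{f}_L(0)=1$, $\overline{f}_L(L)=0$, and its initial slope $-\sqrt{-k}\coth(\sqrt{-k}L)$ corresponds to $\lambda_L = \sqrt{-k}\coth(\sqrt{-k}L) > \sqrt{-k}$, satisfying \eqref{eq: lambda}. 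Since $f>0$, in particular $f(L) \ge 0$, so part (1) yields $f(t) \ge \overline{f}_L(t)$ on $[0,L]$. Fixing $t$ and sending $L\to\infty$, $\overline{f}_L(t) \to e^{-\sqrt{-k}t}$, giving the desired bound. For $k=0$ the same scheme with $\overline{f}_L(t) = 1 - t/L$ (taking $\lambda = 1/L \to 0^+$) yields $f(t) \ge 1$ in the limit.

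The main obstacle is really bookkeeping: verifying $\overline{f} \ge 0$ on $[0,l]$ and $\overline{f}'(l) \le 0$ uniformly across the three curvature regimes so that the signs in the Wronskian argument line up, and checking that the limiting parameter $\lambda_L$ in part (2) always satisfies the admissibility condition \eqref{eq: lambda}. Everything else reduces to comparison with the explicit formulas for $\sigma_{k,\lambda}$.
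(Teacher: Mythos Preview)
Your Wronskian argument is correct and complete: the sign analysis of $W=f\overline f'-f'\overline f$ is right, the endpoint check $W(l)=f(l)\overline f'(l)\le 0$ goes through in all three curvature regimes, and the monotonicity of $f/\overline f$ from the value $1$ at $t=0$ gives the conclusion on $[0,l)$; the limiting scheme for part~(2) is also fine.

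The paper takes a different route. Rather than a direct Sturm-type comparison, it introduces the notion of $\mathcal F_k$-concavity and invokes Peixoto's characterization (Lemma~\ref{lem: concave}) that $f''\le -kf$ is equivalent to $f$ lying above every $\mathcal F_k$-interpolant between any two of its values; applying this on $[0,l]$ and comparing interpolants in the two-dimensional space $\mathcal F_k$ then yields $f\ge\overline f$. Your argument is more elementary and entirely self-contained---it needs no external citation and avoids the case-by-case transformation to ordinary convexity. The paper's approach, on the other hand, is an investment: the $\mathcal F_k$-concavity machinery (especially its extension to the support sense in Lemma~\ref{convex support}) is reused immediately afterward to give the streamlined proof of Toponogov's theorem in Subsection~\ref{toponogov}, where the distance function is not $C^2$ and a Wronskian computation would not apply directly. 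So your method is cleaner for Proposition~\ref{prop: 1} in isolation, while the paper's framework earns its keep in the next subsection.
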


To prove Proposition \ref{prop: 1}, we introduce the concept of $\mathcal F$-convexity. Let $\mathcal F_k$ be the space of all solutions to the second order differential equations
\begin{align}\label{eq: DE}
h''=-k h\quad \textrm{ on }[0, l].
\end{align}
More explicity,
$\mathcal F_k =\mathrm{span} \{c_k, s_k\}$
where
\begin{equation}\label{eq: sk}
c_k(t)=
\begin{cases}
1&\textrm{ if }k=0\\
\cos \left(\sqrt {k}t\right) &\textrm{ if }k>0\\
\cosh \left(\sqrt {-k}t\right) &\textrm{ if }k<0
\end{cases}
\quad \textrm{ and }\quad
s_k(t)=
\begin{cases}
t&\textrm{ if }k=0\\
\frac{1}{\sqrt{k}}\sin \left(\sqrt{k}t\right)&\textrm{ if }k>0\\
\frac{1}{\sqrt{-k}}\sinh \left(\sqrt{-k}t\right)&\textrm{ if }k<0.
\end{cases}
\end{equation}

For fixed $k$, a function $f$ defined on $[0, l]$ is said to be $\mathcal F_k$-convex if
\begin{enumerate}
\item
for any $0\le x_1<x_2 \le l$,
there is a unique solution $h$ to \eqref{eq: DE} with $h(x_i)=f(x_i)$, and
\item
$f\le h$ on $[x_1, x_2]$.
\end{enumerate}

A function $f$ is said to be $\mathcal F_k$-concave if $-f$ is $\mathcal F_k$-convex.
By \cite[Theorem 2]{peixoto1949generalized} (cf. also \cite[Lemma 1.1]{clement1953generalized}), we have the following characterization of $\mathcal F_k$-concavity.
\begin{lemma}\label{lem: concave}
Suppose $f$ is $C^2$ on $[0, l]$ and $
\begin{vmatrix}
s_k(x_1) &c_k(x_1)\\
s_k(x_2) &c_k(x_2)
\end{vmatrix}\ne 0
$ for any distinct $x_1, x_2\in [0, l]$. Then $f''\le -k f$ if and only if $f$ is $\mathcal F_k$-concave.
\end{lemma}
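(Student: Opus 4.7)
My plan is to prove the two implications separately. In both directions, the natural object to compare with $f$ is the unique $h \in \mathcal{F}_k$ interpolating $f$ at two chosen points $x_1 < x_2$ in $[0,l]$; existence and uniqueness of this $h$ is guaranteed by the nonvanishing-determinant (Haar) hypothesis, which amounts to saying $x_2 - x_1$ is never a zero of $s_k$. For $k \le 0$ this is automatic, while for $k > 0$ it forces $l < \pi/\sqrt{k}$, i.e.\ $[0,l]$ is strictly shorter than the first conjugate length of the ODE $h'' + kh = 0$.

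For the direction $\mathcal{F}_k$-concave $\Rightarrow$ $f'' \le -kf$, I would fix an interior point $x_0$, take the symmetric interval $[x_0 - \epsilon, x_0 + \epsilon]$, and let $h_\epsilon$ be the $\mathcal{F}_k$-interpolant of $f$ at the endpoints. Taylor expanding $f$ to second order (using $f \in C^2$) and $h_\epsilon$ using $h_\epsilon'' = -k h_\epsilon$, then equating the endpoint values and adding gives
\begin{equation*}
2\bigl(f(x_0) - h_\epsilon(x_0)\bigr) = -\epsilon^2\bigl(f''(x_0) + k\, h_\epsilon(x_0)\bigr) + o(\epsilon^2).
\end{equation*}
By $\mathcal{F}_k$-concavity the left side is nonnegative, and $h_\epsilon(x_0) \to f(x_0)$ as $\epsilon \to 0$; dividing by $\epsilon^2$ and passing to the limit yields $f''(x_0) + k f(x_0) \le 0$. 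Boundary points are handled by continuity of $f''$.

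For the converse, fix $0 \le x_1 < x_2 \le l$, let $h \in \mathcal{F}_k$ match $f$ at the endpoints, and set $g = f - h$, so $g(x_1) = g(x_2) = 0$ and $g'' + k g \le 0$. The key technical step is to choose a function $\psi \in \mathcal{F}_k$ that is strictly positive on $[x_1,x_2]$; existence of such a $\psi$ is exactly where the Haar hypothesis is used (for $k > 0$ one can take $\psi(x) = \cos\bigl(\sqrt{k}(x - \tfrac{x_1+x_2}{2})\bigr)$, positive on $[x_1,x_2]$ because $x_2 - x_1 < \pi/\sqrt{k}$; for $k \le 0$, simply take $\psi = c_k$). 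Substituting $g = \psi u$ and computing gives the Sturm-type identity $(\psi^2 u')' = \psi (g'' + k g) \le 0$, so $\psi^2 u'$ is nonincreasing on $[x_1,x_2]$, while $u$ vanishes at both endpoints. If $u$ were negative somewhere, then at its minimum $x_* \in (x_1,x_2)$ one would have $u'(x_*) = 0$, forcing $\psi^2 u' \le 0$ on $[x_*,x_2]$ and hence $u$ nonincreasing there, contradicting $u(x_*) < 0 = u(x_2)$. Thus $u \ge 0$, so $g \ge 0$, which is the $\mathcal{F}_k$-concavity condition $f \ge h$ on $[x_1, x_2]$.

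The main subtlety is the reverse direction: a naive maximum-principle argument fails for $k > 0$ because $g'' + kg \le 0$ permits $g'' > 0$ at a negative interior minimum. Producing a positive $\psi \in \mathcal{F}_k$ on $[x_1,x_2]$ and running the $(\psi^2 u')'$ monotonicity trick is what cleanly absorbs the sign issue; this is precisely where the nonvanishing-determinant assumption is essential, and where one sees that the lemma would fail without a length restriction in the $k > 0$ case.
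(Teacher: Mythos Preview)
Your proof is correct. Both directions are argued cleanly: the Taylor-expansion argument for $\mathcal F_k$-concave $\Rightarrow f''+kf\le 0$ is standard and sound (the uniform control on the remainder for $h_\epsilon$ is easily justified via the exact formula $h_\epsilon(x_0+s)=h_\epsilon(x_0)c_k(s)+h_\epsilon'(x_0)s_k(s)$), and for the converse the Sturm substitution $g=\psi u$ with $\psi\in\mathcal F_k$ strictly positive on $[x_1,x_2]$, giving $(\psi^2 u')'=\psi(g''+kg)\le 0$, is exactly the right device to rule out a negative interior minimum when $k>0$. You also correctly identified that the determinant hypothesis forces $l<\pi/\sqrt{k}$ when $k>0$, which is what makes the positive $\psi$ available.

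The paper itself does not prove this lemma; it simply invokes Peixoto's theorem (with a cross-reference to Clement). So your argument is not a reproduction of the paper's proof but a self-contained one. It is worth noting, though, that the paper's \emph{next} lemma (the one on $\mathcal F_k$-convexity in the support sense) sketches a different mechanism, due to Clement: the change of variable $y=\mathrm{ct}_k(t)$ turns $\mathcal F_k$-convexity of $\phi$ into ordinary convexity of $\sqrt{k+y^2}\,\phi(\mathrm{ct}_k^{-1}(y))$, because the second derivative of the latter equals $(k+y^2)^{-3/2}(\phi''+k\phi)\circ\mathrm{ct}_k^{-1}$. That substitution would give an alternative proof of the present lemma as well, reducing both implications to the classical characterisation of $C^2$ concave functions. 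Your Sturm-type argument has the advantage of being entirely intrinsic to the interval $[x_1,x_2]$ and of making transparent exactly where the length restriction enters (existence of a positive $\psi$); the Clement substitution has the advantage of immediately transferring the full toolkit of ordinary convexity (support functions, one-sided derivatives, etc.), which is what the paper exploits in the subsequent lemma.
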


For later use, we generalize this lemma to $\mathcal F_k$-convex/concave functions in the support sense. In fact, we show that $\mathcal F_k$-convexity in the support sense can be reduced to convexity in the classical sense by suitably transforming the function. We expect that this reduction has other applications, since much more is known about convex functions than $\mathcal F_k$-convex functions.
We say a function $f$ is $\mathcal F_k$-convex in the support sense if $f$ is continuous and $f''+k f\ge 0$ in the support sense. i.e. for any $p$ and $\varepsilon>0$, there exists a neighborhood $U$ of $p$ and a $C^2$ function $f_\varepsilon$ on $U$ (called a support function) such that $f_\varepsilon(p)=f(p)$, $f_\varepsilon\ge f$ on $U$ and $f_\varepsilon''(p)+k f_\varepsilon(p)> -\varepsilon$. (cf. \cite[p. 279]{petersen2006riemannian} and \cite{calabi1958extension} for more general definition).

\begin{lemma}\label{convex support}
Let $\mathrm{ct}_k(t)=\frac{c_k(t)}{s_k(t)}$ and $I$ be an open subinterval in $(0, l)$, where $l\le\frac{\pi}{\sqrt{k}}$ if $k>0$. Assume $\phi$ is $\mathcal F_k$-convex on $I$ in the support sense, then
\begin{enumerate}
\item\label{convex 1}
$\sqrt{k+y^2}\, \phi (\mathrm{ct}_k^{-1}(y))$ is a convex function on $ \mathrm{ct}_k(I)$ in the classical sense.
\item\label{convex 2}
$\phi$ is $\mathcal F_k$-convex.
\end{enumerate}
\end{lemma}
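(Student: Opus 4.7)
The plan is to reduce $\mathcal{F}_k$-convexity to classical convexity via the change of variables $y = \mathrm{ct}_k(t)$. Using the identity $c_k^2 + k s_k^2 \equiv 1$ (which follows by differentiating and checking at $t=0$), one sees that $\sqrt{k+y^2} = 1/s_k(t)$ when $y = \mathrm{ct}_k(t)$, so the function in part (1) coincides with $\phi(t)/s_k(t)$ under the substitution. Because $s_k > 0$ on $(0, l)$ (using $l \leq \pi/\sqrt{k}$ when $k > 0$), the map $t \mapsto y$ is a smooth, strictly decreasing diffeomorphism of $I$ onto $\mathrm{ct}_k(I)$. A direct computation using $s_k' = c_k$, $c_k' = -k s_k$, and $\mathrm{ct}_k'(t) = -1/s_k(t)^2$ yields the key transformation identity
\begin{equation*}
\psi''(y) \;=\; s_k(t)^3 \bigl(\phi''(t) + k \phi(t)\bigr),
\end{equation*}
where I write $\psi(y) := \sqrt{k+y^2}\, \phi(\mathrm{ct}_k^{-1}(y))$. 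I would first establish this identity for $C^2$ test functions and only afterwards transfer it to the support-sense setting.

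To prove (1), I take any $q \in \mathrm{ct}_k(I)$, set $p := \mathrm{ct}_k^{-1}(q)$, and for each $\varepsilon > 0$ pick a $C^2$ support function $f_\varepsilon$ for $\phi$ at $p$ with $f_\varepsilon''(p) + k f_\varepsilon(p) > -\varepsilon$. Define $g_\varepsilon(y) := \sqrt{k+y^2}\, f_\varepsilon(\mathrm{ct}_k^{-1}(y))$. Because $\mathrm{ct}_k^{-1}$ is a homeomorphism and the factor $1/s_k$ is strictly positive on $I$, the support relation between $f_\varepsilon$ and $\phi$ near $p$ (including equality at $p$) transfers unchanged to the corresponding relation between $g_\varepsilon$ and $\psi$ near $q$. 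Applying the identity to the $C^2$ function $f_\varepsilon$ gives $g_\varepsilon''(q) = s_k(p)^3 (f_\varepsilon''(p) + k f_\varepsilon(p)) > -s_k(p)^3 \varepsilon$. Since $\varepsilon > 0$ is arbitrary and $s_k(p)$ is a fixed positive number, this shows $\psi$ satisfies the ordinary ($k=0$) support-sense convexity at every point of $\mathrm{ct}_k(I)$; as $\psi$ is continuous, this implies $\psi$ is classically convex.

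Part (2) is then a quick corollary of (1). Given $x_1 < x_2$ in $I$, the determinant hypothesis of Lemma \ref{lem: concave} produces a unique $h \in \mathcal{F}_k$ with $h(x_i) = \phi(x_i)$. Applying the same identity to the smooth function $h$, which satisfies $h'' + kh \equiv 0$, forces $\tilde h(y) := h(t)/s_k(t)$ to be affine in $y$ on $\mathrm{ct}_k([x_1, x_2])$. Since $\tilde h$ agrees with the convex function $\psi$ at the two endpoints, convexity of $\psi$ yields $\psi \leq \tilde h$ on the entire interval, and multiplying back by $s_k(t) > 0$ recovers $\phi \leq h$ on $[x_1, x_2]$, which is exactly $\mathcal{F}_k$-convexity.

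The main technical point I expect to spend effort on is establishing the transformation identity $\psi'' = s_k^3(\phi'' + k\phi)$ cleanly and checking that it interacts correctly with the support-sense definition. Once this is secured, the three cases $k > 0$, $k = 0$, $k < 0$ can be handled uniformly via the relations $c_k' = -k s_k$, $s_k' = c_k$, and $c_k^2 + k s_k^2 = 1$, avoiding any case-by-case analysis.
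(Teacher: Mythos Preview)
Your proposal is correct and follows essentially the same route as the paper: both use the change of variables $y=\mathrm{ct}_k(t)$ together with the identity $(k+y^2)^{-3/2}=s_k(t)^3$ to reduce $\mathcal F_k$-convexity to ordinary convexity of $\psi(y)=\phi(t)/s_k(t)$, and both deduce (2) by translating the secant-line comparison back through the same substitution. The only substantive difference is that you invoke ``support-sense convexity of a continuous function implies classical convexity'' as a known fact, whereas the paper supplies a short self-contained argument for this step (a quadratic perturbation $\psi+\varepsilon y^2$ combined with the elementary Lemma~\ref{convex support2}); if you want your write-up to be self-contained at the same level as the paper, you should fill in that step.
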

\begin{proof}
\eqref{convex 1} is proved for $\mathcal F_k$-convex functions in the classical sense in \cite[Lemma 1.1]{clement1953generalized}. For convenience we sketch it here. For $(x_1, x_2)\subset I$, let $h=\alpha s_k+\beta c_k\in \mathcal F_k$ such that $h(x_i)=\phi(x_i)$. Then the condition $h(x)\ge \phi(x)$ for $x\in(x_1, x_2)$ is equivalent to $\alpha+\beta \mathrm{ct_k}(x)\ge \frac{\phi(x)}{s_k(x)}, $ which is equivalent to $\alpha+\beta y\ge \frac{\phi}{s_k}\circ \mathrm{ct}_k^{-1}(y)=
\sqrt{k+y^2}\, \phi (\mathrm{ct}_k^{-1}(y)) =:\psi(y)$ for $y\in \mathrm{ct}_k((x_1, x_2))$.

Now, suppose $\phi$ is $\mathcal F_k$-convex in the support sense, and let $\phi_\varepsilon$ be the corresponding support function from below on a neighborhood of $x_0$. Direct computation shows that the second derivative of
$\sqrt{k+y^2}\, \phi_\varepsilon (\mathrm{ct}_k^{-1}(y))$ is
${(k+y^2)^{-\frac{3}{2}}} \left[\phi_\varepsilon''(\mathrm{ct}_k^{-1}(y))+k \phi_\varepsilon(\mathrm{ct}_k^{-1}(y))\right] $, which is greater than $- (k+\mathrm{ct}_k(x_0)^2)^{-\frac{3}{2}} \, \varepsilon $ at $\mathrm{ct}_k(x_0)$.
This shows that $\psi$ is convex in the support sense, as the two other conditions are easy to check.

To see that $\psi$ is convex, fix $\varepsilon>0$, let $h_\varepsilon(y)=\varepsilon y^2$ and $\psi_{\varepsilon, y_0}$ be a support function on a neighborhood $U$ of $y_0$ with $\psi_{\varepsilon, y_0}''({y_0})>-\varepsilon$. Then $\psi+h_\varepsilon\ge \psi_{\varepsilon, y_0}+h_{\varepsilon}$ and $(\psi_{\varepsilon, y_0}+h_\varepsilon)''({y_0})>\varepsilon>0$. Hence we can assume $\psi_{\varepsilon, y_0}+h_\varepsilon$ is a convex function on $U$ by shrinking $U$ if necessary. As $y_0$ is arbitrary, by Lemma \ref{convex support2}, $\psi+h_\varepsilon$ is a convex function. By taking $\varepsilon\to 0$, $\psi$ is also convex.

\eqref{convex 2} then follows by reversing the steps in paragraph 1.
\end{proof}

\begin{lemma}\label{convex support2}
Let $f$ be a continuous function defined on an open interval $I$ such that for each $y_0$, there exists a convex function $g$ defined on a neighborhood $U$ of $y_0$ with $g(y_0)=f(y_0)$ and $g(y)\le f(y)$ on $U$. Then $f$ is convex.
\end{lemma}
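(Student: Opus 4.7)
My plan is to argue by contradiction via a standard connectedness trick on the set where $f$ coincides with its affine interpolant plus the maximum deficit. Suppose $f$ is not convex. Then there exist $a<b$ in $I$ such that, writing $L$ for the affine function with $L(a)=f(a)$ and $L(b)=f(b)$, the continuous function $h:=f-L$ on $[a,b]$ takes some strictly positive value in $(a,b)$. Let $M:=\max_{[a,b]} h>0$; since $h(a)=h(b)=0<M$, this maximum is attained at some interior point $y_0\in(a,b)$. My goal is to show that the support hypothesis forces $h\equiv M$ on $(a,b)$, which contradicts $h(a)=0$.

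For the key step, I apply the hypothesis at $y_0$ to obtain an open interval neighborhood $U\subset I$ of $y_0$ and a convex function $g$ on $U$ with $g(y_0)=f(y_0)$ and $g\le f$ on $U$. The auxiliary function $g-L$ is convex on $U$ (subtracting the affine function $L$ preserves convexity), satisfies $(g-L)(y_0)=h(y_0)=M$, and $(g-L)(x)\le h(x)\le M$ for $x\in U$. Thus $g-L$ attains its maximum on the open interval $U$ at the interior point $y_0$. A convex function on an open interval with an interior maximum must be constant: its one-sided derivatives are monotone nondecreasing, and at a local maximum the left derivative is $\ge 0$ and the right derivative is $\le 0$, forcing both to vanish identically on $U$. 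Hence $g-L\equiv M$ on $U$, so $f\ge g=L+M$ on $U$, which combined with $h\le M$ gives $h\equiv M$ on $U$.

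Now define $S:=\{x\in(a,b):h(x)=M\}$. The argument above shows that for every $y_0\in S$ some neighborhood of $y_0$ lies in $S$, so $S$ is open in $(a,b)$. By continuity of $h$, $S$ is also closed in $(a,b)$. Since $y_0\in S$, $S$ is nonempty, and by connectedness of $(a,b)$ we conclude $S=(a,b)$. Continuity of $h$ on $[a,b]$ then yields $h(a)=M>0$, contradicting $h(a)=0$.

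The main obstacle is really just the interior-maximum principle for convex functions used in the second paragraph; everything else is formal. Since we only have the support condition rather than classical smoothness, I must take care to work with convex support functions on a neighborhood of each point and not assume any global regularity of $g$ beyond convexity on its domain $U$, but the argument above only uses convexity on an arbitrarily small interval and so goes through verbatim.
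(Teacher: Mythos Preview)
Your proof is correct and follows essentially the same approach as the paper's: subtract the affine interpolant, locate an interior maximum, use the convex support function to show the function is locally constant at that maximum, and conclude by a connectedness (open--closed) argument. Your write-up is somewhat more explicit than the paper's---you spell out the set $S$ and the interior-maximum principle for convex functions in detail---but the underlying idea is identical.
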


\begin{proof}
Let $(y_1, y_2)\subset I$, and by subtracting a linear function whose values agree with $f$ at $y_i$, we can assume $f(y_i)=0$. It remains to show that $f\le 0$ on $(y_1, y_2)$. If there is an interior maximum point $p\in(y_1, y_2)$, then $f$ is supported below by a convex function $g$ near $p$, which also attains a maximum at $p$. But convexity of $g$ implies that $g$ is locally constant near $p$, and thus $f$ is constant near $p$. This shows that on $(y_1,y_2)$, $f$ is either constant or there is no local maximum, and hence $f\le 0$.
\end{proof}

\subsection{A proof of the Toponogov theorem using $\mathcal F_k$-convexity}\label{toponogov}
In this subsection, we illustrate how the proof of the Toponogov theorem can be simplified by using Lemma \ref{convex support}. The proof does not require consideration of different cases by the sign of the curvature or the types of triangles. This subsection is independent of later sections.

The following version of Toponogov's theorem is taken from \cite{meyer1989toponogov}. Please refer to \cite{meyer1989toponogov} for other versions that can be deduced from \eqref{ii} and for further applications. Our aim is just to give a more unified proof. We denote the length of a geodesic segment by $|\cdot|$.
\begin{theorem}\label{topo}
Let $(M, g)$ be a complete Riemannian manifold with sectional curvature bounded below by $k$.
\begin{enumerate}
\item\label{i}
Let $p_{0}, p_{1}, q$ be three distinct points in $M$, $\gamma$ be a geodesic from $p_{0}$ to $p_{1}$ and $\gamma_{i} $ be minimal geodesics from $q$ to $p_{i}$, $i=0, 1$. Suppose $|\gamma| \leq\left|\gamma_0\right|+\left|\gamma_1\right|$ and $|\gamma| \leq \frac{\pi}{\sqrt{k}}$ in the case $k>0$.
Then there exists a corresponding comparison triangle $\overline{p}_{0}, \overline{p}_{1}, \overline{q} $ in the two-dimensional $M_{k}^{2}$ with corresponding geodesics $\overline{\gamma}_{0}, \overline{\gamma}_{1}, \overline{\gamma}$ which are all minimal of lengths $\left|\overline{\gamma}_{i}\right|=\left|\gamma_{i}\right|, |\overline{\gamma}|=|\gamma|$.
\item\label{ii}
We have the inequality $d(q, \gamma(t))\ge d(\overline q, \overline \gamma(t))$ for $t\in [0, |\gamma|]$.
\end{enumerate}
\end{theorem}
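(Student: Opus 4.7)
My plan is to prove \eqref{i} from the triangle inequality and to reduce \eqref{ii} to a single one-variable $\mathcal F_k$-concavity statement, uniform in the sign of $k$, to which Lemma \ref{convex support} applies.

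For \eqref{i}, minimality of $\gamma_0$ and $\gamma_1$ gives $|\gamma_0|\le|\gamma_1|+|\gamma|$ and $|\gamma_1|\le|\gamma_0|+|\gamma|$, while $|\gamma|\le|\gamma_0|+|\gamma_1|$ is hypothesized; together with $|\gamma|\le\pi/\sqrt{k}$ when $k>0$, these are precisely the conditions for a triangle with the prescribed side lengths to exist in $M_k^2$, unique up to isometry and with sides that are themselves minimizing.

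For \eqref{ii}, set $\rho(t)=d(q,\gamma(t))$ and $\overline\rho(t)=d(\overline q,\overline\gamma(t))$ and introduce the modified distance
\[
m_k(r)=\int_0^r s_k(s)\,ds,\qquad M(t)=m_k(\rho(t)),\qquad \overline M(t)=m_k(\overline\rho(t)).
\]
Since $c_k=1-k\,m_k$, one has $m_k''+k\,m_k=1$, and the model-space identity $\overline\rho''=\mathrm{ct}_k(\overline\rho)(1-\overline\rho'^2)$ yields $\overline M''+k\overline M\equiv 1$. In the ambient space, the Hessian comparison theorem for $\mathrm{Sec}\ge k$---which follows from the Riccati equation along radial geodesics, independently of Toponogov---gives $\rho''\le\mathrm{ct}_k(\rho)(1-\rho'^2)$ whenever $\gamma(t)$ is not a cut point of $q$, and a direct calculation produces
\[
M''=c_k(\rho)\rho'^2+s_k(\rho)\rho''\le c_k(\rho)=1-kM.
\]
Consequently $G:=M-\overline M$ satisfies $G''\le -kG$ with $G(0)=G(|\gamma|)=0$; by Lemma \ref{lem: concave}, off the cut locus $G$ is $\mathcal F_k$-concave.

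The main technical point is the cut locus of $q$, where $\rho$ need not be $C^2$; here the support-sense formulation of Lemma \ref{convex support} earns its keep. For each $t_0\in[0,|\gamma|]$ and each small $\varepsilon>0$, pick a minimizing geodesic from $q$ to $\gamma(t_0)$ and let $q_\varepsilon$ be its point at distance $\varepsilon$ from $q$; the triangle inequality makes $\rho_\varepsilon(t):=\varepsilon+d(q_\varepsilon,\gamma(t))$ a smooth upper barrier for $\rho$ at $t_0$ (smoothness from the standard fact that an interior point of a minimizing geodesic lies strictly before the first cut point along it), and applying the pointwise estimate above to $\rho_\varepsilon$ supplies a $C^2$ upper support function for $M$ satisfying the same ODE inequality up to an $O(\varepsilon)$ error. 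Thus $G$ is $\mathcal F_k$-concave in the support sense, and Lemma \ref{convex support}(2) upgrades this to classical $\mathcal F_k$-concavity. Since $s_k$ does not vanish on $(0,|\gamma|)$ (the borderline case $|\gamma|=\pi/\sqrt{k}$ when $k>0$ handled by continuity), the unique $\mathcal F_k$-interpolant of the vanishing endpoint data of $G$ is identically zero, so $G\ge 0$; hence $M\ge\overline M$, and monotonicity of $m_k$ finally yields $\rho\ge\overline\rho$, which is \eqref{ii}.
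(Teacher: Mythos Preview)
Your proposal is correct and follows essentially the same route as the paper: the modified distance $m_k=\rho_k$, the support-sense inequality $M''+kM\le 1$ from Hessian comparison, the resulting $\mathcal F_k$-concavity of $G=M-\overline M$, and the application of Lemma \ref{convex support} to conclude $G\ge 0$ via the zero interpolant. The only differences are expository: you spell out the triangle-inequality argument for \eqref{i} (the paper cites \cite{meyer1989toponogov}) and the Calabi barrier $\rho_\varepsilon=\varepsilon+d(q_\varepsilon,\cdot)$ for the cut locus (the paper absorbs this into the support-sense Hessian comparison from \cite{petersen2006riemannian}).
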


\begin{proof}
Part \eqref{i} is proved in \cite[Theorem 2.2]{meyer1989toponogov} so we prove \eqref{ii}. Let $l=|\gamma|$. If $k>0$, as shown in \cite[Theorem 2.2 Step 2]{meyer1989toponogov}, by a limiting argument we can assume $l<\frac{\pi}{\sqrt{k}}$. Let $\gamma(t):[0, l]\to M$ be a unit-speed geodesic from $p_0$ to $p_1$ and $\overline \gamma(t):[0, l]\to M_k^2$ be a minimal geodesic from $\overline p_0$ to $\overline p_1$. Let $r=d(q, \cdot)$, and $\overline r= d(\overline q, \cdot)$ be the corresponding function in $ M_k^2$.

Define $\rho_k(\tau):=\int_{0}^{\tau}s_k(t)dt$. In the following, all the differential inequalities are in the support sense. By Hessian comparison theorem (\cite[p. 342]{petersen2006riemannian}), $\nabla ^2 (\rho_k\circ r)\le c_k(r) g$. This implies $f(t):=(\rho_k\circ r)(\gamma(t))$ satisfies $f''(t)\le c_k(r(\gamma(t)))$ and $\overline f(t):=(\rho_k\circ \overline r)(\overline \gamma(t))$ satisfies $\overline f''(t)= c_k(\overline r(\overline \gamma(t)))$. Integrating the identity $s_k''+ks_k=0$ gives $c_k(r)+k \rho_k(r)=1$, which implies $f''(t)+k f(t)\le 1$ and $\overline f''(t)+k \overline f(t)= 1$.
This shows that function $\phi(t):=f(t)-\overline f(t)$ satisfies $\phi''-k \phi\le 0$. We also have the boundary conditions $\phi(0)=\phi(l)$. By Lemma \ref{convex support}, $\phi$ is $\mathcal F_k$-concave. Since $0$ is the unique function in $\mathcal F_k$ with the same boundary values, we have $\phi\ge 0$.
We conclude that $r(\gamma(t))\ge \overline r (\overline \gamma(t))$.
\end{proof}

\subsection{An isoperimetric inequality involving the cut distance}\label{main}
Suppose $(\Omega^n, g)$ is a Riemannian manifold with smooth boundary. We will assume that $\Omega$ is orientable.
Let $p\in \Sigma=\partial \Omega$ and $N(p)$ be the inward unit normal of $\Sigma$. We define the cut function $c(p):=\sup\{t: d(\exp_p(tN(p)), \Sigma)=t\} $ and $\displaystyle c(\Omega):=\inf _{p\in \partial \Omega}c(p)$.
We say a Riemannian manifold with boundary is complete if it is complete as a metric space.

Sometimes to simplify notations, we let $|\cdot|$ to denote the $k$-dimensional volume for any $k$, so e.g. $|\Omega|$ to denote $\mathrm{Vol}(\Omega)$ and $|\partial \Omega|$ denotes $\mathrm{Area}(\partial \Omega)$, which is the $(n-1)$-dimensional volume of $\partial \Omega$.

The following function is essential in stating our results.
Let
\begin{equation}\label{eq: h}
h_k(t)=\frac{|\mathbb B^n_k(t)|}{|\mathbb S^{n-1}_k(t)|}.
\end{equation}
Explictly,
\begin{align}\label{eq: h explicit}
h_k(t) =
\begin{cases}
\displaystyle t/n \quad &\textrm{if }k=0\\
\displaystyle \frac{\displaystyle \int_{0}^{t}\sin^{n-1}\left(\sqrt{k}r\right)dr}{\sin^{n-1}(\sqrt{k}t)}\quad &\textrm{if }k>0\\
\displaystyle \frac{\displaystyle \int_{0}^{t}\sinh^{n-1}\left(\sqrt{-k}r\right)dr}{\sinh^{n-1}(\sqrt{-k}t)}\quad &\textrm{if }k<0.
\end{cases}
\end{align}

We first begin with some simple properties of $h_k$.
\begin{lemma}\label{lim h}
For $k\le 0$, $\lim _{t\to\infty}\frac{1}{h_k(t)}=(n-1)\sqrt{-k}$.
\end{lemma}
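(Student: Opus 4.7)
The plan is to split on the two cases $k = 0$ and $k < 0$ and apply the explicit formula \eqref{eq: h explicit} directly; the lemma is essentially a routine asymptotic computation, so no real obstacle is anticipated.

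For $k = 0$ the statement is immediate from $h_0(t) = t/n$: indeed $1/h_0(t) = n/t \to 0$, which equals $(n-1)\sqrt{-k} = 0$.

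For $k < 0$, write $a := \sqrt{-k}$ for brevity, so
\[
h_k(t) \;=\; \frac{\int_0^t \sinh^{n-1}(ar)\,dr}{\sinh^{n-1}(at)}.
\]
Both numerator and denominator tend to $+\infty$ as $t \to \infty$, so I would apply L'H\^opital's rule. Differentiating gives
\[
\frac{\sinh^{n-1}(at)}{(n-1)\,a\,\sinh^{n-2}(at)\cosh(at)} \;=\; \frac{\tanh(at)}{(n-1)\,a},
\]
which tends to $\frac{1}{(n-1)a}$ as $t \to \infty$. Hence $h_k(t) \to \frac{1}{(n-1)\sqrt{-k}}$, and taking reciprocals yields $\lim_{t\to\infty} \frac{1}{h_k(t)} = (n-1)\sqrt{-k}$, as claimed.

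Equivalently (and giving a sanity check), one can read the same conclusion off the asymptotic $\sinh^{n-1}(at) \sim 2^{-(n-1)} e^{(n-1)at}$, which yields $\int_0^t \sinh^{n-1}(ar)\,dr \sim \frac{2^{-(n-1)}}{(n-1)a}\, e^{(n-1)at}$, so the ratio of these exponential tails gives the same limit. Either route is a short calculation, with no delicate step.
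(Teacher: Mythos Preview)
Your proof is correct. The paper itself states this lemma without proof, treating it as an elementary asymptotic fact, and your L'H\^opital computation (together with the trivial $k=0$ case) is exactly the kind of routine verification that justifies it.
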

\begin{lemma}\label{lem: mono}
The function
$h_k(t)=\frac{|\mathbb B^n_k(t)|}{|\mathbb S^{n-1}_k(t)|}$
is increasing on $(0, l)$, where $l =
\begin{cases}
\infty\quad &\textrm{if }k\le 0\\
\frac{\pi}{\sqrt{k}}\quad &\textrm{if }k>0.
\end{cases}
$

\end{lemma}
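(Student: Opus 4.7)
The plan is to verify $h_k'(t) > 0$ on $(0, l)$ by direct differentiation. Using the explicit formula \eqref{eq: h explicit}, write $h_k = V/A$ with $V(t) = \int_0^t s_k(r)^{n-1}\, dr$ and $A(t) = s_k(t)^{n-1}$, so that $V' = A$. A short calculation gives
\begin{align*}
h_k'(t)\, A(t)^2 = A(t)^2 - V(t) A'(t) = s_k(t)^{n-2}\, \psi_k(t),
\end{align*}
where $\psi_k(t) := s_k(t)^n - (n-1) s_k'(t) V(t)$. Hence the lemma reduces to proving $\psi_k > 0$ on $(0, l)$, and since $\psi_k(0) = 0$, it suffices to show $\psi_k'(t) > 0$.

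Differentiating and using $s_k'' = -k s_k$ yields $\psi_k'(t) = s_k(t)^{n-1} s_k'(t) + (n-1) k\, s_k(t)\, V(t)$. For $k \ge 0$ this is obviously non-negative whenever $s_k'(t) \ge 0$, but for $k < 0$ the second term is negative, and for $k > 0$ with $t > \pi/(2\sqrt{k})$ the first term is negative, so a sign-free rewriting is needed. The key tool is the Wronskian identity $(s_k')^2 + k s_k^2 \equiv 1$ (verify at $t = 0$ and note the derivative vanishes), which for $n \ge 3$ gives
\begin{align*}
(s_k^{n-2} s_k')'(t) = (n-2) s_k(t)^{n-3} - (n-1) k\, s_k(t)^{n-1}.
\end{align*}
Integrating from $0$ to $t$ (the boundary term at $0$ vanishes since $s_k(0) = 0$) and then multiplying by $s_k(t)$ recasts the derivative as
\begin{align*}
\psi_k'(t) = (n-2) s_k(t) \int_0^t s_k(r)^{n-3}\, dr,
\end{align*}
which is strictly positive on $(0, l)$. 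The exceptional case $n = 2$ I will handle by direct computation: using $\int_0^t s_k(r)\, dr = (1 - c_k(t))/k$ for $k \ne 0$ (and $t^2/2$ for $k = 0$), substitution collapses both terms of $\psi_k'$ into $s_k(t) > 0$.

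The main obstacle is recognising that the two potentially opposite-sign terms of $\psi_k'(t)$ fuse, via the Wronskian identity for $s_k$, into a single manifestly positive integral. Once that algebraic step is in place, the lemma becomes a one-line monotonicity argument starting from $\psi_k(0) = 0$.
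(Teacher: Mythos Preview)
Your proof is correct but proceeds quite differently from the paper's. The paper argues that $1/h_k$ is decreasing: writing
\[
\frac{1}{h_k(t)} = \frac{\int_0^t (s_k^{\,n-1})'(r)\,dr}{\int_0^t s_k(r)^{n-1}\,dr},
\]
it observes that the pointwise ratio $(s_k^{\,n-1})'/s_k^{\,n-1} = (n-1)\,s_k'/s_k$ is decreasing on $(0,l)$, and then invokes the general lemma (Lemma~\ref{lem: dec}) that a decreasing ratio $f/g$ forces $\int_0^t f/\int_0^t g$ to be decreasing. This is shorter and dimension-free, but it outsources the work to that auxiliary lemma and to the fact that $s_k'/s_k$ is decreasing (which, if one checks, ultimately rests on the same Wronskian identity $(s_k')^2+ks_k^2\equiv 1$ that you use). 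Your approach is a direct, fully self-contained computation of $h_k'$; the price is the case split $n=2$ versus $n\ge 3$ and the algebraic trick of integrating $(s_k^{\,n-2}s_k')'$ to collapse the two sign-indefinite terms. Either route is fine; the paper's is cleaner to state, yours makes the mechanism more explicit.
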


\begin{proof}

From \eqref{eq: h explicit}, $1/h_k(t)$ is of the form
\begin{align*}
\frac{\int_{0}^{t}({s_k}^{n-1})'(r)dr}{\int_{0}^{t}s_k^{n-1}(r)dr}
\end{align*}
where $s_k(r)$ is given by \eqref{eq: sk}. By Lemma \ref{lem: dec} below, since it is readily checked that $\frac{({s_k}^{n-1})'(r)}{s_k^{n-1}(r)}= (n-1) \frac{s_k'(r)}{s_k(r)}$ is decreasing, so is $1/h_k$.
\end{proof}

\begin{lemma}[\cite{daicomparison} Lemma 1.4.10]\label{lem: dec}
If $f, g$ are two continuous functions such that $f(t)/g(t)$ is decreasing for $t>0$ and $g$ is positive, then
$\displaystyle \frac{\int_{0}^{t}f(s)ds}{\int_{0}^{t}g(s)ds}$
is decreasing for $t>0$.
\end{lemma}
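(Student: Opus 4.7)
The plan is to deduce monotonicity of the quotient by analyzing the sign of its derivative. Let $F(t)=\int_{0}^{t} f(s)\,ds$ and $G(t)=\int_{0}^{t} g(s)\,ds$. Continuity of $g$ together with $g>0$ ensures $G(t)>0$ for $t>0$, and $F,G\in C^{1}((0,\infty))$. By the quotient rule, the desired inequality $\frac{d}{dt}(F/G)\le 0$ is equivalent to
\begin{equation*}
f(t)\,G(t) \;\le\; F(t)\,g(t),
\end{equation*}
so it suffices to verify this pointwise sign condition for every $t>0$.

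The key step is to rewrite the difference as a single integral and pull out the factor $g(s)g(t)$ to expose the ratio $f/g$:
\begin{equation*}
F(t)g(t)-f(t)G(t) \;=\; \int_{0}^{t}\bigl[f(s)g(t)-f(t)g(s)\bigr]\,ds \;=\; \int_{0}^{t} g(s)g(t)\left[\frac{f(s)}{g(s)}-\frac{f(t)}{g(t)}\right] ds.
\end{equation*}
The hypothesis that $f/g$ is decreasing on $(0,\infty)$ gives $f(s)/g(s)\ge f(t)/g(t)$ for $0<s\le t$, and combined with positivity of $g$ this makes the integrand nonnegative. Thus $F(t)g(t)\ge f(t)G(t)$, as required.

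There is no real obstacle: morally, the conclusion is just the statement that $F(t)/G(t)$ is a $g$-weighted average of the values of $f/g$ over $[0,t]$, and therefore must dominate the value of $f/g$ at the right endpoint whenever $f/g$ is monotonically decreasing. The only minor bookkeeping point is that the hypothesis is taken in the non-strict sense, but every inequality above is non-strict, so the conclusion that $F/G$ is decreasing follows in exactly the sense stated in the lemma.
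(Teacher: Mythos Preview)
Your proof is correct. The paper does not give its own proof of this lemma, merely citing \cite{daicomparison}; your argument---differentiating the ratio and rewriting the numerator $F(t)g(t)-f(t)G(t)$ as $\int_0^t g(s)g(t)\bigl[\tfrac{f(s)}{g(s)}-\tfrac{f(t)}{g(t)}\bigr]\,ds$---is exactly the standard one found in that reference.
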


\begin{lemma}\label{lem: convex}
Let $g(t)$ be a smooth function such that $g(0)=0$, $g(t)>0$ and $1/g(t)$ is strictly convex for $t>0$.
Let
$h(t):=\frac{\int_{0}^{t}g(r)dr}{g(t)}$
and $u(t)=\log (g(t))$. Assume also that $\displaystyle \lim_{t \downarrow 0} \frac{u'(t)g(t)}{u'(t)^2-u''(t)}=0$.
\begin{enumerate}
\item
If $2(u'')^2-u'u'''> 0$, then $h(t)$ is a strictly convex function.
\item
If $2(u'')^2-u'u'''<0$, then $h(t)$ is a strictly concave function.
\end{enumerate}
\end{lemma}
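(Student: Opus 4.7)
The plan is to reduce the sign of $h''$ to the sign of a single primitive $F$ whose derivative cleanly isolates the quantity $2(u'')^2 - u' u'''$, and then to use the assumed initial value to integrate.

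First, since $hg = \int_0^t g(r)\,dr$, differentiating gives $h'g + hg' = g$, i.e.\ $h' = 1 - u' h$. Differentiating once more,
\[
h'' = \bigl((u')^2 - u''\bigr) h - u' = A\,h - u',\qquad A := (u')^2 - u''.
\]
The hypothesis that $1/g = e^{-u}$ is strictly convex yields $(1/g)'' = A\,e^{-u} > 0$, so $A > 0$ throughout $(0,\infty)$. Consequently
\[
\operatorname{sgn}(h''(t)) \;=\; \operatorname{sgn}\!\left(g(t)\,\frac{h''(t)}{A(t)}\right) \;=\; \operatorname{sgn}\!\left(G(t) - \frac{u'(t)\,g(t)}{A(t)}\right),
\]
where $G(t) := \int_0^t g(r)\,dr$. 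So define
\[
F(t) := G(t) - \frac{u'(t)\,g(t)}{(u'(t))^2 - u''(t)},
\]
and the lemma reduces to showing $F(t) > 0$ (resp.\ $F(t) < 0$) on $(0,\infty)$ in the two cases.

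Next I would compute $F'$. Using $g' = u' g$, the numerator derivative is $(u'g)' = (u'' + (u')^2)g$; combined with $A' = 2u'u'' - u'''$, one obtains after a short algebraic simplification
\[
(u'' + (u')^2)\,A - u'\,A' \;=\; A^{2} - \bigl(2(u'')^{2} - u'u'''\bigr),
\]
and therefore
\[
F'(t) \;=\; g(t) - \frac{d}{dt}\!\left(\frac{u'g}{A}\right) \;=\; \frac{g(t)\,\bigl(2(u'')^{2} - u'u'''\bigr)}{A(t)^{2}}.
\]
Because $g > 0$ and $A > 0$, the sign of $F'$ is exactly the sign of $2(u'')^{2} - u'u'''$.

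Finally, the hypothesis $\lim_{t\downarrow 0} \frac{u'(t)g(t)}{(u'(t))^{2}-u''(t)} = 0$ combined with $G(0)=0$ gives $F(0^+) = 0$. Integrating $F'$ from $0$ to $t$ then yields $F(t) > 0$ on $(0,\infty)$ in case (1) and $F(t) < 0$ in case (2), which by the reduction above translates to strict convexity, respectively strict concavity, of $h$. The main obstacle is purely computational: arriving at the clean identity for $F'$ requires cancelling the $A^2$ term, and the correct quantity to differentiate is dictated by writing $h'' = A(h - u'/A)$ and clearing denominators by $g$; once this is done the rest is just an application of the fundamental theorem of calculus.
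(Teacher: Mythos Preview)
Your proof is correct and follows essentially the same approach as the paper: both compute $h'' = ((u')^2 - u'')h - u'$, use the strict convexity of $1/g$ to reduce the sign of $h''$ to the sign of $\int_0^t g - \dfrac{u'g}{(u')^2 - u''}$, and then differentiate this difference to obtain the factor $\dfrac{g\,(2(u'')^2 - u'u''')}{((u')^2 - u'')^2}$, invoking the boundary hypothesis at $t=0$ to conclude. The only cosmetic difference is that you name the auxiliary function $F$ explicitly and write everything in terms of $g$, whereas the paper keeps the notation $e^u$.
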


\begin{proof}
Since
$h(t)=e^{-u(t)} \int_{0}^{t}e^{u(r)}dr $,
we compute
$h'(t)= -u'(t)e^{-u(t)}\int_{0}^{t}e^{u(r)}dr+1$
and
\begin{align*}
h''(t)
=&-u''(t)e^{-u(t)}\int_{0}^{t}e^{u(r)}dr+u'(t)^2 e^{-u(t)}\int_{0}^{t}e^{u(r)}dr-u'(t)\\
=&\left(u'(t)^2-u''(t)\right)e^{-u(t)}\int_{0}^{t}e^{u(r)}dr -u'(t).
\end{align*}
Note that
$\left(1/g\right)''=(e^{-u})''=e^{-u}\left(u'^2-u''\right)>0$,
so the condition $h''(t)> 0$ is equivalent to
\begin{align*}
\int_{0}^{t}e^{u(r)}dr> \frac{u'(t)e^{u(t)}}{u'(t)^2-u''(t)}.
\end{align*}
At $t=0$, both sides of the above is $0$ (by continuous extension).
Thus by taking the derivative again, the above can be implied by
\begin{align*}
0< e^{u(t)}- \left(\frac{u'(t)e^{u(t)}}{u'(t)^2-u''(t)}\right)'
= e^{u} \left[\frac{2(u'')^2-u'u'''}{\left(u'^2-u''\right)^2} \right],
\end{align*}
which is true by our assumption in case 1. The concave case is of course similar.
\end{proof}

\begin{lemma}\label{lem: h convex}
\begin{enumerate}
\item
If $k>0$, then $h_k(t)$ is strictly convex on $\left(0, \frac{\pi}{\sqrt{k}}\right)$.
\item
$h_0$ is linear.
\item
If $k<0$, then $h_k(t)$ is strictly concave.
\end{enumerate}

\end{lemma}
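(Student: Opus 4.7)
The plan is to reduce the entire lemma to a single application of Lemma \ref{lem: convex} with $g(t)=s_k(t)^{n-1}$, so that $h_k(t)=\int_0^t g(r)\,dr\big/g(t)$ is exactly in the form analyzed there. Set $u(t)=\log g(t)=(n-1)\log s_k(t)$ and $v(t)=u'(t)=(n-1)s_k'(t)/s_k(t)$. The defining ODE $s_k''=-ks_k$ yields a Riccati-type identity
\begin{align*}
v'=-k(n-1)-\frac{v^{2}}{n-1},
\end{align*}
and differentiating once more gives $v''=-2vv'/(n-1)$. Substituting into the test quantity of Lemma \ref{lem: convex},
\begin{align*}
2(u'')^{2}-u'u'''=2(v')^{2}-v\,v''=2v'\!\left(v'+\frac{v^{2}}{n-1}\right)=-2k(n-1)\,v',
\end{align*}
where the last step uses $v'+v^{2}/(n-1)=-k(n-1)$. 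So its sign is controlled entirely by the signs of $k$ and $v'$.

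Next I would check the technical hypotheses of Lemma \ref{lem: convex}. Strict convexity of $1/g$ reduces to
\begin{align*}
u'^{2}-u''=\frac{n}{n-1}v^{2}+k(n-1)>0,
\end{align*}
which is immediate for $k\ge 0$. For $k<0$ I would use the explicit formula $v(t)=(n-1)\sqrt{-k}\coth(\sqrt{-k}t)$ together with $\coth^{2}>1$ to obtain the sharp bound $v^{2}>(n-1)^{2}(-k)$, from which both the positivity above and the inequality $v'<0$ follow simultaneously; for $k>0$ the sign $v'<0$ is automatic since both terms in the Riccati identity are negative. Finally, the boundary condition $\lim_{t\downarrow 0}u'(t)g(t)/(u'^{2}-u'')=0$ is straightforward: the numerator $u'g=(n-1)s_{k}^{n-2}s_{k}'$ vanishes at $0$ for $n\ge 3$ and stays bounded for $n=2$, while the denominator tends to $+\infty$ because $v(t)\to\infty$ as $t\to 0^{+}$.

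With these in hand, the three conclusions read off directly. For $k>0$, $-2k(n-1)v'>0$, so case (1) of Lemma \ref{lem: convex} gives strict convexity of $h_k$ on $(0,\pi/\sqrt{k})$. For $k<0$, $-2k(n-1)v'<0$, so case (2) gives strict concavity. The case $k=0$ falls outside the strict inequalities in Lemma \ref{lem: convex}, so I would handle it by direct computation: with $s_0(t)=t$, the explicit formula \eqref{eq: h explicit} gives $h_0(t)=t/n$, which is linear. The main delicate point is securing the a priori bound $v^{2}>(n-1)^{2}(-k)$ in the negative curvature case, since this single estimate simultaneously delivers both the convexity of $1/g$ and the correct sign of $v'$; everything else is bookkeeping driven by $s_k''=-ks_k$.
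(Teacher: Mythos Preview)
Your proposal is correct and follows essentially the same approach as the paper: both apply Lemma \ref{lem: convex} with $g(t)=s_k(t)^{n-1}$ and $u=(n-1)\log s_k$, and both handle $k=0$ by direct inspection of $h_0(t)=t/n$. The only difference is cosmetic: the paper computes $2(u'')^2-u'u'''$ explicitly as $2k^2(n-1)^2\csc^2(\sqrt{k}t)$ (resp.\ $-2k^2(n-1)^2\mathrm{csch}^2(\sqrt{-k}t)$), whereas you reach the equivalent expression $-2k(n-1)v'$ via the Riccati identity for $v=u'$; your version has the merit of verifying the auxiliary hypotheses of Lemma \ref{lem: convex} (strict convexity of $1/g$ and the boundary limit) more carefully than the paper, which dismisses them as ``easy to see''.
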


\begin{proof}
The function $h_0$ is obviously linear.
It is easy to see that $\frac{1}{s_k(t)^{n-1}}$ is convex for all $k$ and that if $k>0$, then $u(t)=m\log (s_k(t))$ satisfies $2u''^2-u'u'''=2 k^2 m^2 \csc ^2(\sqrt{k} t)>0$ on $(0, \frac{\pi}{\sqrt{k}})$ and so $h_k(t)$ is convex by Lemma \ref{lem: convex}. Similarly if $k<0$, then $2u''^2-u'u'''=-2 k^2 m^2 \mathrm{csch} ^2(\sqrt{-k} t)<0$ and so $h_k$ is concave.
\end{proof}

We can now state our first main result.
\begin{theorem}\label{thm: vol c}
Suppose $(\Omega^n, g)$ is a complete Riemannian manifold with smooth boundary. Assume the Ricci curvature of $\Omega$ satisfies $\mathrm{Ric}\ge (n-1)k g$. Then
\begin{enumerate}
\item\label{mc comp}
The mean curvature of $\partial \Omega$ satisfies $H(p)\le (n-1) \lambda_k(c(p))$ (see \eqref{eq: lambda2}) and
\begin{equation}\label{vol h}
\mathrm{Vol}(\Omega) \ge\int_{\partial \Omega}h_k(c(p)) dS.
\end{equation}
\item
Suppose the equality in \eqref{vol h} holds and $\mathrm{Vol}(\Omega)<\infty$. Then outside the cut locus of $\partial \Omega$, $\Omega$ is isometric to $\{(p, t): p\in \Sigma, 0\le t<c(p)\}$ with metric
$g(p, t)=dt^2+ ({\overline f}_{c(p)}(t))^2 g|_{\Sigma}(p)$, where
$\overline f_{c(p)}(t)=\sigma_{k, \lambda_p}(t)$
with $\lambda_p=\lambda_k(c(p))$.
\item
Suppose $\Omega$ is compact. Then the equality holds if and only if $\Omega$ is isometric to $\mathbb B^n_k(l)$ for some $l$.
\end{enumerate}
\end{theorem}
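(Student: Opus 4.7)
The plan is to analyze the volume Jacobian of the normal exponential map $\psi_t(p)=\exp_p(t N(p))$ from $\partial\Omega$, writing it as $\theta(p,t)=\phi(p,t)^{n-1}$ with $\phi(p,0)=1$. For each $p\in\partial\Omega$, the Riccati identity for the quantity $\eta(t):=(n-1)\phi'/\phi$ (which equals minus the mean curvature of the parallel hypersurface $\psi_t(\partial\Omega)$ with respect to $\gamma'$) reads $\eta'=-|\mathrm{II}_t|^2-\mathrm{Ric}(\gamma',\gamma')$. Combining Cauchy--Schwarz $|\mathrm{II}_t|^2\ge \eta^2/(n-1)$ with $\mathrm{Ric}\ge (n-1)k$ and rewriting in terms of $\phi$ produces, in the support sense,
\[
\phi''(p,t)+k\,\phi(p,t)\le 0 \quad\text{on } [0,c(p)), \qquad \phi(p,0)=1, \qquad \phi'(p,0)=-\tfrac{H(p)}{n-1}.
\]

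For part (1), set $\bar f(t):=s_k(c(p)-t)/s_k(c(p))$, the unique element of $\mathcal F_k$ with $\bar f(0)=1$ and $\bar f(c(p))=0$; its initial slope is $\bar f'(0)=-\lambda_k(c(p))$. Since $\psi_t$ is a local diffeomorphism on $[0,c(p))$, one has $\phi(p,\cdot)>0$ there and $\phi(p,c(p))\ge 0$ by continuity. Applying Proposition \ref{prop: 1}(1) on $[0,l]$ for each $l<c(p)$ and sending $l\to c(p)$ yields $\phi(p,t)\ge \bar f(t)$ on $[0,c(p)]$. Evaluating the derivative of this inequality at $t=0$ gives $-H(p)/(n-1)\ge-\lambda_k(c(p))$, the mean-curvature bound. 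Since $\Omega\setminus\mathrm{Cut}(\partial\Omega)$ has full measure,
\[
\mathrm{Vol}(\Omega) = \int_{\partial\Omega}\!\int_0^{c(p)}\!\phi(p,t)^{n-1}\,dt\,dS(p) \ge \int_{\partial\Omega}\frac{\int_0^{c(p)} s_k(u)^{n-1}\,du}{s_k(c(p))^{n-1}}\,dS(p) = \int_{\partial\Omega} h_k(c(p))\,dS(p),
\]
via the change of variable $u=c(p)-t$ and \eqref{eq: h explicit}.

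For part (2), finite volume and Fubini turn the overall equality into the pointwise identity $\phi(p,t)=\bar f(t)=\sigma_{k,\lambda_p}(t)$ with $\lambda_p=\lambda_k(c(p))$ along every normal geodesic. Tracing the equality back through the Riccati derivation forces simultaneously the Cauchy--Schwarz saturation---so the shape operator $\mathrm{II}_t$ of each parallel hypersurface equals $(\phi'/\phi)\,\mathrm{Id}$---and $\mathrm{Ric}(\gamma',\gamma')=(n-1)k$. Substituting the umbilic relation into the first variation $\partial_t g_t=-2\,\mathrm{II}_t$ and integrating from $0$ to $t$ produces $g_t(p)=\phi(p,t)^2\,g|_\Sigma(p)$, which is the claimed warped-type description of $\Omega$ outside the cut locus.

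The main obstacle is part (3): promoting the structure in (2) to a genuine model ball under compactness. From (2) one has the metric $dt^2+\sigma_{k,\lambda_p}(t)^2\,g|_\Sigma(p)$ on $\{(p,t):0\le t<c(p)\}$, with each fibre collapsing at the first zero $t=c(p)=l_k(\lambda_p)$ of $\sigma_{k,\lambda_p}$. Compactness forces the cut locus to be reached along every normal geodesic, and the decisive constraint is that the ambient metric $g$ must extend smoothly across this cut locus. A non-constant $c(p)$ would force distinct fibres to collapse at different times, and the resulting apex set could not support a smooth Riemannian structure. Therefore $c(p)\equiv l$ is constant, $\partial\Omega$ is totally umbilic with constant principal curvature $\lambda_k(l)$, the metric becomes the genuine warped product $[0,l]\times_{\sigma_{k,\lambda_k(l)}}\Sigma$, and the standard smoothness-at-the-apex criterion for warped products identifies $(\Sigma,g|_\Sigma)$ with the round unit $(n-1)$-sphere, yielding $\Omega\cong\mathbb B^n_k(l)$.
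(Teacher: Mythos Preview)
Your arguments for parts (1) and (2) coincide with the paper's: both derive $\phi''+k\phi\le 0$ for $\phi=F^{1/(n-1)}$ from the Riccati/second-variation identity together with Cauchy--Schwarz and the Ricci bound, invoke Proposition~\ref{prop: 1} to get $\phi\ge \overline f_{c(p)}=\sigma_{k,\lambda_p}$, read off the mean-curvature estimate from the derivative at $t=0$, and integrate to obtain \eqref{vol h}. The equality analysis yielding umbilicality and the conformal evolution $g_t=\phi^2 g|_\Sigma$ is likewise the same.

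The gap is in part (3). The paper does \emph{not} argue directly from the warped-type metric; instead, having obtained umbilicality and $c(p)=l_k(H_1(p))$, it recognises the equality $\mathrm{Vol}(\Omega)=\int_{\partial\Omega}h_k(l_k(H_1))\,dS$ as the equality case of the Heintze--Karcher--Ros inequality (Theorem~\ref{v h}) and defers to that theorem's rigidity. The proof of the latter imports nontrivial external input: for $n>2$ it uses \cite[Lemma~5.3(iii)]{HK} to force $H_1$ (hence $c$) constant and then Kasue's rigidity \cite{kasue1983ricci}; for $n=2$ it uses Gauss--Bonnet together with Fenchel's theorem in $\mathbb R^3$ (when $k>0$) or its Minkowski-space analogue \cite{ye2019closed} (when $k<0$). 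Your claim that ``a non-constant $c(p)$ \dots\ could not support a smooth Riemannian structure'' is precisely the point that needs proof and is not self-evident: the vanishing of $\sigma_{k,\lambda_p}$ at $t=c(p)$ only tells you each cut point is a focal point of maximal multiplicity, whence the cut locus has rank $\le 1$, but this alone does not force $c$ constant. Once $c\equiv l$ \emph{is} known, your final step can be made to work (the map $p\mapsto\exp_p(lN(p))$ then has zero differential, so the cut locus is a point on each component of $\Sigma$, and connectedness plus the cone-smoothness criterion finish it), but the constancy of $c$ is the substantive missing ingredient, and the paper obtains it only via \cite{HK}.
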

Before proving Theorem \ref{thm: vol c}, we give some simple corollaries.
Recall the Chebyshev's inequality: if $h$ is a non-negative non-decreasing function on a measure space $(X, \mu)$,
then
$\mu(\{x:f(x)\ge t\})\le \frac{1}{h(t)}\int_X h\circ f d\mu$.
Combining Lemma \ref{lem: mono}, Theorem \ref{thm: vol c}, Lemma \ref{lim h} and the Chebyshev’s inequality, we have the following corollaries.
\begin{corollary}
Suppose $\mathrm{Ric}\ge (n-1)kg$ on $\Omega$, then $h_k(t)\mathcal H^{n-1}(\{p\in \partial \Omega: c(p)\ge t\}) \le \mathrm{Vol}(\Omega)$. In particular, $\{p\in\partial \Omega:c(p)\le t_0\}\ne \emptyset$, where $h_k(t_0)=\frac{\mathrm{Vol}(\Omega)}{\mathrm{Area}(\partial \Omega)}$.
\end{corollary}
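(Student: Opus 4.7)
The proof should be a direct application of the cut-distance/volume inequality from Theorem \ref{thm: vol c}\eqref{mc comp}, combined with the monotonicity of $h_k$ from Lemma \ref{lem: mono} via the Chebyshev inequality stated just above the corollary.

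First I would apply the Chebyshev inequality on the measure space $(\partial\Omega,\mathcal{H}^{n-1})$ with $f(p)=c(p)$ and $h=h_k$. By Lemma \ref{lem: mono}, $h_k$ is non-negative and non-decreasing on its domain, so the hypothesis of Chebyshev is met and we obtain
\[
h_k(t)\,\mathcal{H}^{n-1}\bigl(\{p\in\partial\Omega:c(p)\ge t\}\bigr)\;\le\;\int_{\partial\Omega}h_k(c(p))\,dS.
\]
Theorem \ref{thm: vol c}\eqref{mc comp} bounds the right-hand side from above by $\mathrm{Vol}(\Omega)$, which is exactly the first assertion.

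For the ``in particular'' statement I would argue by contradiction. Suppose $c(p)>t_0$ for every $p\in\partial\Omega$. The proof of Lemma \ref{lem: mono} actually yields that $h_k$ is \emph{strictly} increasing, since $(n-1)s_k'/s_k$ is strictly decreasing and Lemma \ref{lem: dec} then gives strict monotonicity of $1/h_k$. Hence $h_k(c(p))>h_k(t_0)$ pointwise on $\partial\Omega$; as $\mathrm{Area}(\partial\Omega)$ is positive and finite, integrating and using the defining equation $h_k(t_0)=\mathrm{Vol}(\Omega)/\mathrm{Area}(\partial\Omega)$ gives
\[
\int_{\partial\Omega}h_k(c(p))\,dS\;>\;h_k(t_0)\,\mathrm{Area}(\partial\Omega)\;=\;\mathrm{Vol}(\Omega),
\]
contradicting Theorem \ref{thm: vol c}\eqref{mc comp}. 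Thus $\{p\in\partial\Omega:c(p)\le t_0\}$ is non-empty.

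\textbf{Main obstacle.} There is essentially no obstacle; the only subtlety is upgrading the weak monotonicity in Lemma \ref{lem: mono} to strict monotonicity, which is needed to turn the Chebyshev bound into the strict inequality in the last step. In the case $k>0$ one should also note that $c(p)<\pi/\sqrt{k}$ (so that $h_k(c(p))$ makes sense), which follows from the Bonnet--Myers diameter bound under $\mathrm{Ric}\ge(n-1)k g$; the existence of $t_0$ itself is then immediate from the continuity, strict monotonicity, and range of $h_k$ on its domain.
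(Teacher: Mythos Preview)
Your proof is correct and follows exactly the route indicated in the paper, which simply says to combine Lemma~\ref{lem: mono}, Theorem~\ref{thm: vol c} and Chebyshev's inequality; you have merely supplied the details, including the (necessary) observation that $h_k$ is in fact strictly increasing so that the ``in particular'' clause goes through.
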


\begin{corollary}
Suppose $\mathrm{Ric}\ge 0$ on $\Omega$ and
$ \mathrm{Vol}(\Omega) <\infty$, then $\mathcal H^{n-1}(\{p\in \partial \Omega: c(p)=\infty\})=0$.
\end{corollary}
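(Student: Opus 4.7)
The proof should be essentially a one-line consequence of the previous corollary specialized to $k=0$, combined with the explicit form of $h_0$.

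The plan is to apply the preceding corollary with $k=0$. From equation \eqref{eq: h explicit} we have the explicit formula $h_0(t)=t/n$, so the previous corollary gives, for every $t>0$,
\begin{equation*}
\mathcal H^{n-1}\bigl(\{p\in\partial\Omega:c(p)\ge t\}\bigr)\;\le\;\frac{n\,\mathrm{Vol}(\Omega)}{t}.
\end{equation*}
Since $\mathrm{Vol}(\Omega)<\infty$, the right-hand side tends to $0$ as $t\to\infty$.

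Next I would observe that $\{p\in\partial\Omega:c(p)=\infty\}=\bigcap_{m\in\mathbb N}\{p\in\partial\Omega:c(p)\ge m\}$, a countable nested intersection of measurable sets. By the estimate above, the set $\{c\ge 1\}$ already has finite $\mathcal H^{n-1}$-measure, so continuity of the measure from above applies and yields
\begin{equation*}
\mathcal H^{n-1}\bigl(\{c=\infty\}\bigr)=\lim_{m\to\infty}\mathcal H^{n-1}\bigl(\{c\ge m\}\bigr)\le\lim_{m\to\infty}\frac{n\,\mathrm{Vol}(\Omega)}{m}=0,
\end{equation*}
which is the desired conclusion.

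There is no real obstacle here; the only point worth flagging is that the argument is special to $k=0$ because $1/h_k(t)\not\to0$ when $k<0$ (by Lemma \ref{lim h}, the limit equals $(n-1)\sqrt{-k}>0$), so the Chebyshev-type estimate degenerates in that regime and this reasoning would not give a set of measure zero. For $k=0$ the linear growth of $h_0$ is exactly what makes the conclusion work.
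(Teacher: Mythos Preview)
Your proof is correct and follows exactly the route the paper intends: the paper states that this corollary (together with the neighboring ones) is obtained by combining Theorem \ref{thm: vol c}, Lemma \ref{lem: mono}, Lemma \ref{lim h} and Chebyshev's inequality, which is precisely what you do via the preceding corollary with $k=0$ and $h_0(t)=t/n$. One minor simplification: the nested-intersection and continuity-from-above step is unnecessary, since $\{c=\infty\}\subset\{c\ge t\}$ for every $t$ already gives $\mathcal H^{n-1}(\{c=\infty\})\le n\,\mathrm{Vol}(\Omega)/t\to 0$ directly.
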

\begin{corollary}
Suppose $\mathrm{Ric}\ge (n-1)kg$ on $\Omega$ for some $k<0$, then $\mathcal H^{n-1}(\{p\in \partial \Omega: c(p)=\infty\})\le \sqrt{-k}(n-1)\mathrm{Vol}(\Omega)$.
\end{corollary}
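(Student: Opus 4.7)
The plan is to deduce this from Theorem~\ref{thm: vol c}\eqref{mc comp}, Lemma~\ref{lem: mono}, Lemma~\ref{lim h} and Chebyshev's inequality, in essentially the same pattern used to derive the two preceding corollaries, followed by a passage to the limit $t \to \infty$.

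First I would observe that the inequality is trivial if $\mathrm{Vol}(\Omega) = \infty$, so I may assume it is finite. By Theorem~\ref{thm: vol c}\eqref{mc comp}, $\int_{\partial \Omega} h_k(c(p))\,dS \le \mathrm{Vol}(\Omega)$. Since $k < 0$, Lemma~\ref{lem: mono} gives that $h_k$ is non-decreasing on $(0, \infty)$, so Chebyshev's inequality applied with $h = h_k$ and $f = c$ yields
\begin{equation*}
\mathcal H^{n-1}\bigl(\{p \in \partial \Omega : c(p) \ge t\}\bigr) \le \frac{1}{h_k(t)} \int_{\partial \Omega} h_k(c(p))\,dS \le \frac{\mathrm{Vol}(\Omega)}{h_k(t)}
\end{equation*}
for every $t > 0$.

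Next I would write $\{p \in \partial \Omega : c(p) = \infty\}$ as the decreasing intersection $\bigcap_{m=1}^{\infty}\{p : c(p) \ge m\}$. Since the Chebyshev bound above ensures each of these sets has finite $\mathcal H^{n-1}$-measure, continuity of measure along a decreasing sequence of measurable sets gives
\begin{equation*}
\mathcal H^{n-1}\bigl(\{c = \infty\}\bigr) = \lim_{t \to \infty} \mathcal H^{n-1}\bigl(\{c \ge t\}\bigr) \le \mathrm{Vol}(\Omega) \, \lim_{t \to \infty} \frac{1}{h_k(t)}.
\end{equation*}
By Lemma~\ref{lim h}, $\lim_{t \to \infty} 1/h_k(t) = (n-1)\sqrt{-k}$, giving the desired bound.

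No step presents a serious obstacle; the only mild care needed is to justify the exchange of limit and measure, and this is immediate from the finiteness of $\mathcal H^{n-1}(\{c \ge t\})$ supplied by the very first Chebyshev estimate. Measurability of $\{c \ge t\}$ is not an issue since the cut function $c : \partial\Omega \to (0,\infty]$ is lower semi-continuous.
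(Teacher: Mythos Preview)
Your proof is correct and follows exactly the approach indicated in the paper, which derives all three corollaries in one stroke from Theorem~\ref{thm: vol c}, Lemma~\ref{lem: mono}, Lemma~\ref{lim h}, and Chebyshev's inequality without spelling out the details. You have simply supplied those details, including the passage to the limit $t\to\infty$ justified by finiteness of $\mathcal H^{n-1}(\{c\ge t\})$.
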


\begin{lemma}\label{c}
Let $k>0$. With the assumption in Theorem \ref{thm: vol c}, we have $c(p)<\frac{\pi}{\sqrt{k}}$ for all $p\in \partial \Omega$.
\end{lemma}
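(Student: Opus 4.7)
My plan is to run a standard focal-point / Bonnet--Myers argument along the inward normal geodesic from $p$, using that the cut distance from $\partial\Omega$ is always bounded above by the first focal distance. Fix $p\in\partial\Omega$ and let $\gamma(t)=\exp_p(tN(p))$ be the inward unit-speed normal geodesic, which is minimizing to $\partial\Omega$ on $[0,c(p)]$ by definition of $c(p)$. Up to the first focal point of $\partial\Omega$ along $\gamma$, let $S(t)$ denote the shape operator of the parallel hypersurface at $\gamma(t)$ with respect to $\gamma'(t)$, and set $\theta(t):=\operatorname{tr}S(t)$, so that $\theta(0)=H(p)$.

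The Riccati equation along $\gamma$, combined with the Cauchy--Schwarz inequality $\operatorname{tr}(S^{2})\ge \theta^{2}/(n-1)$ and the hypothesis $\operatorname{Ric}\ge (n-1)kg$, yields the scalar differential inequality
\begin{align*}
\theta'(t)\le -\frac{\theta(t)^{2}}{n-1}-(n-1)k.
\end{align*}
Since $\partial\Omega$ is smooth, $H(p)$ is finite; by \eqref{eq: lambda2} the map $\lambda_k:(0,\pi/\sqrt{k})\to\mathbb{R}$ is a decreasing bijection, so there is a unique $a\in(0,\pi/\sqrt{k})$ with $(n-1)\lambda_k(a)=H(p)$. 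The associated model ODE $\tilde\theta'=-\tilde\theta^{2}/(n-1)-(n-1)k$ with $\tilde\theta(0)=H(p)$ has the explicit solution $\tilde\theta(t)=(n-1)\lambda_k(a+t)$, which diverges to $-\infty$ as $t\nearrow\pi/\sqrt{k}-a$.

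A standard scalar Riccati comparison gives $\theta(t)\le\tilde\theta(t)$ on the common interval of existence, so $\theta$ itself must blow up to $-\infty$ at some $t^{*}\le\pi/\sqrt{k}-a$. The blow-up of the mean curvature of the parallel hypersurface at $t^{*}$ is exactly the appearance of a focal point of $\partial\Omega$ along $\gamma$, and combining this with the textbook inequality $c(p)\le$ (first focal distance) yields $c(p)\le t^{*}<\pi/\sqrt{k}$, where the strict inequality uses $a>0$. The only subtle point is matching those two textbook ingredients to our setup with a smooth hypersurface boundary; neither creates real difficulty, and the whole argument is essentially Bonnet--Myers adapted to a boundary.
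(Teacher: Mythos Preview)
Your argument is correct and is essentially the same Bonnet--Myers/Riccati approach as the paper's own proof. The only difference is packaging: the paper argues by contradiction (assuming $c(p)=\pi/\sqrt{k}$ and invoking Ge's mean curvature comparison \cite{ge2015comparison} to force $H(p)=-\infty$), whereas you carry out the scalar Riccati comparison explicitly and read off the strict focal bound $t^{*}\le \pi/\sqrt{k}-a<\pi/\sqrt{k}$ directly; your version is more self-contained but otherwise identical in content.
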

\begin{proof}
We may assume $k=1$. As in Bonnet-Myers' theorem, it is easy to see that $c(p)\le \pi$. Suppose $c(p)=\pi$. Let $\Sigma_t$ be the parallel hypersurfaces $\{\exp_{q}(t N(q)): q\in U\}$ where $U$ is a small neighborhood of $p$ in $\partial \Omega$ and $N$ is the inward unit normal. By the first variation formula, the mean curvatures $H_{\Sigma_t}(\exp_{p}(t N(p)))\to H(p)$ as $t\to 0^+$. On the other hand, by \cite[Eqn. 1.8]{ge2015comparison}, $H_{\Sigma_t}(\exp_{p}(t N(p)))\le (n-1)\cot (\pi-t)\to -\infty$, a contradiction.
\end{proof}
We now prove Theorem \ref{thm: vol c}.
\begin{proof}[Proof of Theorem \ref{thm: vol c}]
\begin{enumerate}
\item
Let $m=n-1$, $p\in \partial \Omega$ and $N(p)$ be the inward unit normal of $\partial \Omega$.
Define $F(p, t)$ by $dV(\exp_p(tN(p)))=F(p, t)dt\wedge dS$, where $dS$ be the induced area form on $\Sigma$. The second variation formula then reads (\cite[Equation 1.5]{li1993lecture})
\begin{equation*}
\label{eq: 2nd}
\frac{\partial ^2}{\partial t^2}F(p, t)= -\mathrm{Ric}(\partial_t, \partial_t)F(p, t)+ \left(H(p, t) ^2 -|A(p, t)|^2\right) F(p, t),
\end{equation*}
where $H(p, t)$ and $A(p, t)$ denotes the mean curvature and the second fundamental form (w.r.t. outward normal) of the hypersurface $\Sigma_t= \psi_t(\Sigma)$ at the point $\psi_t(p)$, and $\psi_t(p)=\exp_{p}(tN(p))$. We use the convention that $A=-\nabla N$ on $\Sigma$ and $H=\mathrm{tr}(A)$.

Let us write $F_p(t):=F(p, t)$, regarded as a family of functions in $t$.
Using the Cauchy-Schwarz inequality and the first variation formula
\begin{equation}\label{eq: 1var}
{F_p}'=\frac{\partial F}{\partial t}=- H(p, t) F_p,
\end{equation}
we have
\begin{equation}\label{ineq: 2var}
\begin{split}
{F_p}''
\le& \frac{m-1}{m}H(p, t)^2 F_p-\mathrm{Ric}(\partial_t, \partial_t)F_p\\
=& \frac{m-1}{m}\frac{{{F_p}'}^2}{F_p} -\mathrm{Ric}(\partial_t, \partial_t)F_p\\
\le& \frac{m-1}{m}\frac{{{F_p}'}^2}{F_p} - mk F_p.
\end{split}
\end{equation}

For convenience, we let $f_p(t)=F_p(t)^{\frac{1}{m}}$.
Then from \eqref{ineq: 2var},
we have
$${f_p}''\le -k f_p. $$

Taking also the boundary conditions $ F_p(0)=1 $ and $ F_p(c(p))\ge 0$ into considerations (as there cannot be any focal point along $t\mapsto \exp_p(tN(p))$ before $c(p)$), we have
$$
\begin{cases}
{f_p}''\le-k f_p\\
f_p(0)=1\\
f_p(c(p))\ge 0.
\end{cases}
$$

For any $p$, let $\overline f_{c(p)}(t)$ be the solution of the ODE (if $k>0$, $c(p)< \frac{\pi}{\sqrt{k}}$ by Lemma \ref{c})
\begin{equation*}
\begin{cases}
{\overline f_{c(p)}}''=-k \overline f_{c(p)}. \\
\overline f_{c(p)}(0)=1, \\
\overline f_{c(p)}(c(p))=0.
\end{cases}
\end{equation*}

More explicitly, if we define $\lambda_p=\lambda_k(c(p))$, then $\overline f_{c(p)}$ is given by
$\overline f_{c(p)}(t)=\sigma_{k, \lambda_p}(t)$ (see \eqref{f bar}).
(If $c(p)=\infty$, we take the limit $\displaystyle \lambda_p=\lim_{l\to \infty}\lambda(l)$ and $\overline f_{c(p)}$ is still well-defined.)
By Proposition \ref{prop: 1}, we have $f_p(t)\ge \overline f_{c(p)}(t)$ on $[0, c(p)]$. So \eqref{eq: 1var} gives $H(p)\le m\lambda_k(c(p))$.

To complete the proof, note that
\begin{equation}\label{ineq: vol}
\begin{split}
\mathrm{Vol}(\Omega)
=& \int_{\mathbb \partial \Omega}\int_{0}^{c(p)}f_p(t)^mdt dS(p)\\
\ge& \int_{\mathbb \partial \Omega}\int_{0}^{c(p)}\overline f_{c(p)}(t)^m dt d S(p)\\
=&\int_{\partial \Omega}h_k(c(p)) dS.
\end{split}
\end{equation}
The last line follows because the volume of $\mathbb B^n_k(r)$ is exactly given by
\begin{align}\label{hk int}
|\mathbb B^n_k(r)|
=\int_{\mathbb S^m_k(r)}\int_{0}^{r}\overline f_r(t)^m dt \, dS
=|\mathbb S^m_k(r)|\int_{0}^{r}\overline f_r(t)^m dt
\end{align}
and so $\displaystyle h_k(r)=\int_{0}^{r}\overline f_r(t)^m dt$.
\item
Suppose the equality holds and $\mathrm{Vol}(\Omega)<\infty$.
Then $F(p, t)=\left(\overline f_{c(p)}(t)\right)^m$ and $\Sigma_t$ is umbilical by \eqref{ineq: 2var}. The umbilicity implies that $\nabla ^2 t|_{\Sigma_t}=\frac{\overline {f}_{c(p)}'(t)}{\overline {f}_{c(p)}(t)} g|_{\Sigma_t(p)}$.

Let $h:=g|_{\Sigma_t}$, regarded as a family of metrics on $\Sigma$ and $x^i$ be a local coordinates on $\Sigma$. Then
\begin{align*}
\frac{d}{dt}h_{ij}=h(\nabla _i\partial_t, \partial_j)+h(\partial_i, \nabla _j \partial_t)=2\nabla ^2 t(\partial_i, \partial_j)=2\frac{\overline {f}_{c(p)}'(t)}{\overline {f}_{c(p)}(t)}h_{ij}.
\end{align*}
From this it is easy to see that $h=({\overline f}_{c(p)}(t))^2 g|_{\Sigma}$ and so $g(p, t)=dt^2+ ({\overline f}_{c(p)}(t))^2 g|_{\Sigma}(p)$.
\item
If $\Omega$ is compact and the equality holds, then as before $\partial \Omega$ is umbilical and \eqref{eq: 1var} gives $c(p)= l_k(H_1(p))$ for all $p\in \partial \Omega$, where $H_1=\frac{H}{m}$. The equality then becomes $\int_{\partial \Omega} h_k(l_k(H_1))dS= \mathrm{Vol}(\Omega)$, which is the equality case of Theorem \ref{v h}. Therefore $\Omega$ is isometric to $\mathbb B^n_k(l)$ for some $l$.

\end{enumerate}
\end{proof}
Interestingly, the above argument also leads to a proof of the Bonnet-Myers' theorem.
\begin{theorem}\label{BM}
If $M^n$ is a compact Riemannian manifold with $\mathrm{Ric}\ge (n-1)k g$ ($k>0$), then $\mathrm{diam}(M)\le \frac{\pi}{\sqrt{k}}$.
\end{theorem}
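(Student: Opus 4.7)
The plan is to argue by contradiction: suppose $\mathrm{diam}(M)>\pi/\sqrt{k}$, so that one can pick $p,q\in M$ with $L:=d(p,q)>\pi/\sqrt{k}$, together with a unit-speed minimizing geodesic $\gamma:[0,L]\to M$ joining them. Since $M$ is compact and smooth, the injectivity radius at $p$ is positive, and we fix $\varepsilon>0$ strictly smaller than both this injectivity radius and $L-\pi/\sqrt{k}$. The key idea is to \emph{manufacture a smooth boundary} on which Lemma \ref{c} can be applied, by removing a small normal ball from $M$.

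Concretely, set $\Omega:=M\setminus B(p,\varepsilon)$. Then $\Omega$ is a compact (hence complete) Riemannian manifold with smooth boundary $\partial\Omega=S(p,\varepsilon)$, and it inherits the Ricci bound $\mathrm{Ric}\ge (n-1)kg$ from $M$. Lemma \ref{c} therefore gives $c(x)<\pi/\sqrt{k}$ for every $x\in\partial\Omega$, and it suffices to exhibit a single boundary point violating this estimate. The natural candidate is $x_0:=\gamma(\varepsilon)$: the inward unit normal there is $N(x_0)=\dot\gamma(\varepsilon)$ (it points away from $p$, and hence into $\Omega$), so $\exp_{x_0}(tN(x_0))=\gamma(\varepsilon+t)$.

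The one nonroutine step, and the main obstacle I anticipate, is verifying that $d(\gamma(\varepsilon+t),\partial\Omega)=t$ for all $t\in[0,L-\varepsilon]$, which would yield $c(x_0)\ge L-\varepsilon>\pi/\sqrt{k}$ and close the argument. The upper bound $d(\gamma(\varepsilon+t),\partial\Omega)\le t$ is realized simply by flowing back along $\gamma$ to $\gamma(\varepsilon)\in\partial\Omega$. For the matching lower bound, minimality of $\gamma$ on $[0,L]$ forces $d(\gamma(\varepsilon+t),p)=\varepsilon+t$, and then the triangle inequality gives $d(\gamma(\varepsilon+t),z)\ge d(\gamma(\varepsilon+t),p)-d(p,z)=(\varepsilon+t)-\varepsilon=t$ for every $z\in S(p,\varepsilon)$. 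With the distance identity in hand, $c(x_0)\ge L-\varepsilon>\pi/\sqrt{k}$ directly contradicts Lemma \ref{c}, so we must have $\mathrm{diam}(M)\le\pi/\sqrt{k}$. In effect, Bonnet--Myers is extracted from Theorem \ref{thm: vol c} by blowing up a single point into a small smooth hypersurface, so that the cut-distance machinery of the previous subsection becomes available.
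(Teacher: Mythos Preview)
Your proof is correct and follows the paper's strategy exactly: delete a small geodesic ball to create a smooth boundary, then exhibit a boundary point (where a long minimizing geodesic enters $\Omega$) whose cut distance exceeds $\pi/\sqrt{k}$. The only difference is packaging---the paper does not invoke Lemma~\ref{c} but instead pulls the bound $H(p)\le(n-1)\cot l$ directly from the proof of Theorem~\ref{thm: vol c} and lets $l\to\pi$ to force $H(p)=-\infty$; since Lemma~\ref{c} itself opens with a Bonnet--Myers-type focal-point estimate, the paper's route is marginally more self-contained, but the two arguments are essentially identical.
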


\begin{proof}
Without loss of generality assume $k=1$. With the notation in the proof of Theorem \ref{thm: vol c}, the argument there actually shows that for any $l<\pi$, if $c(p)\ge l$, then $f_p(t)\ge \overline f_l(t)$ on $[0, l]$. By \eqref{eq: 1var} and \eqref{f bar}, this implies $H(p)\le (n-1) \cot l$.

Now, if there exists $p_0\in M$ such that $\max d (p_0, \cdot) >\pi$. Take a small geodesic ball $B(p_0, \varepsilon)$ such that there exists $p\in \partial \Omega$ with $c(p)\ge \pi$, where $\Omega:=M\setminus B(p_0, \varepsilon)$. From the above, the mean curvature of $\partial B(p_0, \varepsilon)$ at $p$ is infinite, a contradiction.
\end{proof}
\begin{corollary} \label{cor: iso1}
Suppose $(\Omega^n, g)$ is a complete Riemannian manifold with smooth compact boundary. Assume the Ricci curvature of $\Omega$ satisfies $\mathrm{Ric}\ge (n-1)k g$.
\begin{enumerate}
\item \label{case: k>0}
If $k\ge 0$ and the average of the cut function $\displaystyle \fint_{\partial \Omega}c(p)dS\ge l$, then
\begin{align*}
\frac{ \mathrm{Area}(\partial \Omega)}{\mathrm{Vol}(\Omega)}\le \frac{\mathrm{Area}(\mathbb S^{n-1}_k(l))} {\mathrm{Vol}\left(\mathbb B^n_k (l)\right)}.
\end{align*}
\item \label{case: k<0}
If $k<0$ and $c(\Omega)\ge l$,
then
\begin{align*}
\frac{ \mathrm{Area}(\partial \Omega)}{\mathrm{Vol}(\Omega)}\le \frac{\mathrm{Area}(\mathbb S^{n-1}_k(l))} {\mathrm{Vol}\left(\mathbb B^n_k (l)\right)}.
\end{align*}
\end{enumerate}
In both cases, the equality holds if and only if $\Omega$ is isometric to $\mathbb B^n_k(l)$.
\end{corollary}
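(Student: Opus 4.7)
The plan is to deduce both cases from Theorem \ref{thm: vol c} by rewriting the inequality there as
\begin{equation*}
\frac{|\Omega|}{|\partial \Omega|} \ge \fint_{\partial \Omega} h_k(c(p))\, dS,
\end{equation*}
and then reducing the right-hand side to $h_k(l)$ using the structural information about $h_k$ recorded in Lemma \ref{lem: mono} (monotonicity) and Lemma \ref{lem: h convex} (convexity in $k\ge 0$, concavity in $k<0$). Observe that the sought inequality is equivalent to $|\Omega|/|\partial \Omega| \ge h_k(l)$ since $h_k(l) = |\mathbb B^n_k(l)|/|\mathbb S^{n-1}_k(l)|$ by definition \eqref{eq: h}.

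For case \eqref{case: k>0} ($k\ge 0$), I would apply Jensen's inequality to the convex function $h_k$ (Lemma \ref{lem: h convex}, with the linear case $k=0$ as equality) on the probability space $(\partial \Omega, dS/|\partial \Omega|)$ to get $\fint_{\partial \Omega} h_k(c(p))\, dS \ge h_k\bigl(\fint_{\partial \Omega} c(p)\, dS\bigr)$. The hypothesis $\fint_{\partial \Omega} c \ge l$ together with monotonicity of $h_k$ (Lemma \ref{lem: mono}) then bounds this below by $h_k(l)$, completing the chain. Note that for $k>0$ we silently need $\fint c$ to lie in the domain of convexity $(0, \pi/\sqrt{k})$, which is guaranteed by Lemma \ref{c}.

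For case \eqref{case: k<0} ($k<0$), the function $h_k$ is concave so Jensen's inequality points the wrong way. This is precisely why a stronger hypothesis is needed here: the pointwise lower bound $c(p)\ge l$ combined with monotonicity of $h_k$ gives the pointwise inequality $h_k(c(p))\ge h_k(l)$, which integrates immediately to $\fint_{\partial \Omega} h_k(c(p))\, dS \ge h_k(l)$. In both cases we arrive at $|\Omega|/|\partial \Omega| \ge h_k(l)$, which is the claimed inequality.

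For the equality statement, suppose equality holds. Then equality must hold in Theorem \ref{thm: vol c}, so by part (3) of that theorem $\Omega$ is isometric to $\mathbb B^n_k(l')$ for some $l'$; for such a ball the cut function is constant equal to $l'$, and equality in the step $h_k(\fint c)\ge h_k(l)$ (Case 1) or $h_k(c(p))\ge h_k(l)$ (Case 2), together with the strict monotonicity of $h_k$, forces $l'=l$. Conversely, for $\Omega=\mathbb B^n_k(l)$ every intermediate inequality is an equality. The only subtle point—and the only place where one must be a bit careful—is checking strict monotonicity (resp.\ strict convexity for the $k>0$ subcase of Case 1) at the equality step; this follows directly from the computations in the proofs of Lemmas \ref{lem: mono} and \ref{lem: h convex}.
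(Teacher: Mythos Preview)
Your derivation of the inequalities in both cases is correct and coincides with the paper's argument: Theorem \ref{thm: vol c} gives $\mathrm{Vol}(\Omega)\ge \int_{\partial\Omega}h_k(c(p))\,dS$, and one passes to $h_k(l)|\partial\Omega|$ via Jensen (using Lemma \ref{lem: h convex}) when $k\ge 0$ and via pointwise monotonicity (Lemma \ref{lem: mono}) when $k<0$.

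The equality argument, however, is circular as written. You invoke Theorem \ref{thm: vol c}\,(3), but look at how that item is proved in the paper: equality in \eqref{vol h} is rewritten as $\int_{\partial\Omega}h_k(l_k(H_1))\,dS=\mathrm{Vol}(\Omega)$, which is precisely the equality case of Theorem \ref{v h}; and the equality analysis of Theorem \ref{v h} (in dimension $n>2$) in turn appeals to the equality case of Corollary \ref{cor: iso1}. So the chain
\[
\text{Cor.\,\ref{cor: iso1} equality} \;\Longleftarrow\; \text{Thm.\,\ref{thm: vol c}\,(3)} \;\Longleftarrow\; \text{Thm.\,\ref{v h} equality} \;\Longleftarrow\; \text{Cor.\,\ref{cor: iso1} equality}
\]
closes on itself. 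A secondary gap is that Theorem \ref{thm: vol c}\,(3) assumes $\Omega$ compact, whereas the corollary only assumes $\partial\Omega$ compact; you would need to justify compactness before invoking it.

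The paper breaks the circle by arguing directly: from equality one first extracts $c(p)\equiv l$ and $f_p=\overline f_l$ (hence the mean curvature of $\partial\Omega$ equals that of $\mathbb S^{n-1}_k(l)$), observes that some interior point realizes $d(\cdot,\partial\Omega)=l$, and then applies the rigidity theorem of Kasue \cite[Theorem A\,(2)]{kasue1983ricci} to conclude $\Omega\cong \mathbb B^n_k(l)$. If you want to keep your overall strategy, you should replace the appeal to Theorem \ref{thm: vol c}\,(3) by this direct route through Kasue's result.
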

\begin{proof} [Proof of Corollary \ref{cor: iso1}]
The inequality in case \ref{case: k<0} follows from Theorem \ref{thm: vol c} and Lemma \ref{lem: mono}. Then function $h_0(t)$ is linear and by Lemma \ref{lem: h convex}, $h_k$ is strictly convex when $k>0$. So when $k\ge 0$, by Theorem \ref{thm: vol c} and Jensen inequality,
\begin{align*}
\mathrm{Vol}(\Omega)\ge
\int_{\partial \Omega} h_k(c(p))dS
\ge |\partial \Omega|h_k\left({\fint}_{\partial \Omega}c(p)dS\right)
\end{align*}
where $\fint_{\partial \Omega}=\frac{1}{|\partial \Omega|}\int_{\partial \Omega}$.
In other words,
\begin{align*}
\frac{ \mathrm{Area}(\partial \Omega)}{\mathrm{Vol}(\Omega)}\le \frac{\mathrm{Area}(\mathbb S^{n-1}_k(l))} {\mathrm{Vol}\left(\mathbb B^n_k (l)\right)}.
\end{align*}

Suppose the equality holds.
Then in both cases, we deduce from \eqref{ineq: vol} that $c(p)=l$ for all $p\in \partial \Omega$ and $f=\overline f_l$. Therefore $F(t)=\left(\overline f_l(t)\right)^{n-1}$ and so from \eqref{eq: 1var}, the mean curvature of $\partial \Omega$ is exactly the mean curvature of $\mathbb S^{n-1}_k(l)$ in $\mathbb B^n_k(l)$, which is equal to $(n-1)\lambda$. Moreover, clearly there exists $q\in \Omega$ such that $d(q, \partial \Omega)=l$. It then follows from \cite[Theorem A (2)]{kasue1983ricci} that $\Omega$ is isometric to $\mathbb B^n_k(l)$ (note that $l$ is exactly $C_1(k, -\lambda)$ in \cite{kasue1983ricci}).
\end{proof}
We give an alternative proof of the following comparison and rigidity result of Li \cite{li2014sharp} and Ge \cite{ge2015comparison}.
\begin{corollary}\label{li ge}
Suppose $(\Omega^n, g)$ is a complete Riemannian manifold with smooth boundary. Suppose the Ricci curvature of $\Omega$ satisfies $\mathrm{Ric}\ge (n-1)k g$ and the mean curvature of $\partial \Omega$ satisfies $H\ge (n-1)\lambda$, where we assume $\lambda>\sqrt{-k}$ when $k\le 0$. Then $\sup_{x\in \Omega}d(x, \partial \Omega)\le l_k(\lambda)$. If $\partial \Omega$ is compact, then the equality holds if and only if $\Omega$ is isometric to $\mathbb B^n_k(l_k(\lambda))$.
\end{corollary}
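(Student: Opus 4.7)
The plan is to leverage Theorem \ref{thm: vol c}(1), whose pointwise bound $H(p) \le (n-1)\lambda_k(c(p))$ on $\partial\Omega$ is the natural dual to the mean curvature lower bound hypothesized here. Combining that bound with $H(p) \ge (n-1)\lambda$ yields $\lambda_k(c(p)) \ge \lambda$ for every $p \in \partial\Omega$. Inspection of \eqref{eq: lambda2} shows $l \mapsto \lambda_k(l)$ is strictly decreasing on the relevant interval in each of the three sign cases for $k$, with the standing assumption $\lambda > \sqrt{-k}$ (when $k \le 0$) ensuring $l_k(\lambda)$ is finite. Inverting the inequality therefore gives the pointwise cut distance bound $c(p) \le l_k(\lambda) =: l$ on all of $\partial\Omega$.

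Next I pass from this boundary bound to the desired interior bound. Fix any $x \in \Omega$. Since $\Omega$ is complete as a metric space, the Hopf--Rinow-type theorem for manifolds with boundary gives that closed metric balls are compact, so the compact set $\partial\Omega \cap \overline{B}(x, d(x,\partial\Omega)+1)$ contains a point realizing $d(x, \partial\Omega)$, joined to $x$ by a minimizing unit-speed geodesic $\gamma$. By first variation $\gamma$ meets $\partial\Omega$ orthogonally at some $p$, and by the very definition of the cut distance its length satisfies $d(x, \partial\Omega) = \mathrm{length}(\gamma) \le c(p) \le l$. Taking the supremum over $x$ yields $\sup_{x\in\Omega} d(x, \partial\Omega) \le l_k(\lambda)$.

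For the rigidity direction, assume $\partial\Omega$ is compact and the equality $\sup_{x\in\Omega} d(x,\partial\Omega) = l$ is attained in the limit. By the same Hopf--Rinow compactness applied to a maximizing sequence $x_n$, we produce a genuine point $q \in \Omega$ with $d(q, \partial\Omega) = l$. At this stage the hypotheses $\mathrm{Ric} \ge (n-1)kg$ on $\Omega$, $H \ge (n-1)\lambda$ on $\partial\Omega$, together with the existence of $q$ at distance exactly $l = l_k(\lambda)$ from $\partial\Omega$, match precisely the equality case of \cite[Theorem A(2)]{kasue1983ricci} (with $l$ being $C_1(k,-\lambda)$ in Kasue's notation), which concludes that $\Omega$ is isometric to $\mathbb{B}^n_k(l)$; this is the same conclusion step used at the end of the proof of Corollary \ref{cor: iso1}.

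The hardest part is the rigidity. The pointwise cut distance inequality alone only forces equality in Theorem \ref{thm: vol c}(1) at those boundary points $p$ where $c(p) = l$, and it is not immediate that equality at a single such $p$ suffices to produce a full isometry with a geodesic ball, since the second variation comparison in the proof of Theorem \ref{thm: vol c} only gives rigidity along one normal geodesic. Bridging that gap requires a propagation-of-equality argument of maximum-principle flavor along nearby normal geodesics, which is exactly what Kasue's theorem encodes; a self-contained version would amount to reproducing the proofs in \cite{li2014sharp, ge2015comparison}, and I would not attempt to redo it here.
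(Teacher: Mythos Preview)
Your proposal is correct and follows essentially the same route as the paper. Both arguments combine the pointwise bound $H(p)\le (n-1)\lambda_k(c(p))$ from Theorem~\ref{thm: vol c}\eqref{mc comp} with the strict monotonicity of $\lambda_k$ to control the cut distance, and both invoke \cite[Theorem~A(2)]{kasue1983ricci} for rigidity; the only cosmetic difference is that you argue directly that $c(p)\le l_k(\lambda)$ for every $p$ while the paper phrases the same step as a proof by contradiction, and you are slightly more explicit about the Hopf--Rinow step producing a nearest boundary point and the maximizer $q$.
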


\begin{proof}
Suppose $d(x, \partial \Omega)>l_k(\lambda)$ for some $x\in \Omega$, then $c(p)>l_k(\lambda)$ for some $p\in \partial \Omega$.
By Theorem \ref{thm: vol c} \eqref{mc comp}, $H(p)\le (n-1)\lambda_k(c(p))<(n-1)\lambda_k(l_k(\lambda))=(n-1)\lambda$, which is a contradiction.

Assume $\partial \Omega$ is compact and the equality holds. Then $\Omega$ is compact as $\sup _{x\in \Omega}d(x, \partial \Omega)<\infty$. Thus there exists $x_0\in \Omega$ with $d(x_0, \partial \Omega)=l_k(\lambda)$. By \cite[Theorem A (2)]{kasue1983ricci}, $\Omega$ is isometric to $\mathbb B^n_k(l_k(\lambda))$.
\end{proof}

Let us single out a weaker version of the $k=0$ case of
Corollary \ref{cor: iso1} here, as $h_0$ can be easily written down.
\begin{corollary}\label{cor1}
Suppose $(\Omega^n, g)$ has non-negative Ricci curvature, then
\begin{align*}
c(\Omega)|\partial \Omega| \le n|\Omega|.
\end{align*}
The equality holds if and only if $\Omega$ is a Euclidean ball.
\end{corollary}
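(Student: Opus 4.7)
The plan is to derive this as a direct consequence of Theorem \ref{thm: vol c} in the $k=0$ case, together with the explicit formula $h_0(t)=t/n$ from \eqref{eq: h explicit}.

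First I would apply Theorem \ref{thm: vol c} \eqref{mc comp} with $k=0$ to obtain
\begin{equation*}
|\Omega| \;\ge\; \int_{\partial \Omega} h_0(c(p))\, dS \;=\; \frac{1}{n}\int_{\partial \Omega} c(p)\, dS.
\end{equation*}
Next, since $c(p)\ge c(\Omega)$ pointwise on $\partial \Omega$ by the definition of the cut distance, I would bound
\begin{equation*}
\int_{\partial \Omega} c(p)\, dS \;\ge\; c(\Omega)\, |\partial \Omega|,
\end{equation*}
and combining these two estimates yields $c(\Omega)|\partial \Omega|\le n|\Omega|$, as required.

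For the equality case, assume $c(\Omega)|\partial \Omega| = n|\Omega|$ (so in particular $|\Omega|<\infty$ and we may take $\partial \Omega$ compact). Then both of the above inequalities must be equalities. The second one forces $c(p)=c(\Omega)=:l$ for all $p\in\partial\Omega$, while equality in the first one is the equality case of \eqref{vol h} in Theorem \ref{thm: vol c}. Invoking part (3) of that theorem (valid because $\Omega$ is compact, as $|\Omega|<\infty$ and $c(\Omega)=l<\infty$ together with $\mathrm{Ric}\ge 0$ give compactness via the warped product structure of part (2)), we conclude that $\Omega$ is isometric to $\mathbb{B}^n_0(l)$, a Euclidean ball of radius $l$. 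Conversely, a direct computation shows that for $\Omega=\mathbb{B}^n_0(l)$, both $c(\Omega)=l$ and $|\partial \Omega|/|\Omega|=n/l$, so equality holds.

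No step should be a genuine obstacle here, since the heavy lifting was done in establishing Theorem \ref{thm: vol c}; the only mildly delicate point is verifying that the equality hypothesis really does force $\Omega$ to be compact so that part (3) of Theorem \ref{thm: vol c} applies, rather than only the weaker structural conclusion of part (2).
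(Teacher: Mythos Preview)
Your argument is correct and is essentially the paper's own approach: the paper does not give a separate proof of Corollary \ref{cor1} but simply records it as the $k=0$ case of Corollary \ref{cor: iso1}, whose proof is exactly the chain you wrote down (Theorem \ref{thm: vol c} plus $h_0(t)=t/n$, then $c(p)\ge c(\Omega)$). For the equality case, the paper (via Corollary \ref{cor: iso1}) invokes Kasue's rigidity result rather than Theorem \ref{thm: vol c}(3), but either route works; your compactness worry is not really an issue since Corollary \ref{cor: iso1} already assumes $\partial\Omega$ compact, and once $c(p)\equiv l<\infty$ the normal exponential map exhibits $\Omega$ as the continuous image of the compact set $\{(p,t):p\in\partial\Omega,\ 0\le t\le l\}$, hence $\Omega$ is compact without needing to pass through part (2).
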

\begin{remark}
\begin{enumerate}
\item
By inspecting the proof of Theorem \ref{thm: vol c}, we can actually generalize the estimate to
$\mathrm{Vol}(\Omega_\rho)\ge \int_{\partial \Omega} j_k(c(p), \rho)dS(p)$,
where $j_k(r, \rho)=\frac{1}{s_k(r)^m} \int_{\max\{r-\rho, 0\}}^{r}s_k(t)^m dt $ and $\Omega_\rho=\{x\in \Omega: d(x, \partial \Omega)\le \rho\}$.
\item
A special case of Corollary \ref{cor1}, where $\Omega$ is a metric ball, is proved in \cite{mondino2017isoperimetric} (Theorem 1.1).
It is easy to see that Corollary \ref{cor1} is not true if $\Omega$ does not have non-negative Ricci curvature. For example, let $\Omega$ be the revolution surface obtained by rotating the graph of $e^{x-L}$ on $[0, L]$ about the $x$-axis. Then independent of $L$, $|\partial \Omega|\ge 2\pi$, $c(\Omega)\to \infty$ as $L\to \infty$ but the area of $\Omega $ is bounded from above by a constant independent of $L$.
Clearly, $\Omega$ is negatively curved and does not satisfy the assumptions of Corollary \ref{cor1}.
\end{enumerate}
\end{remark}

As an illustration of Corollary \ref{cor: iso1},
if $\Omega_r$ is a family of smooth domains such that either $c(\Omega_r)=r$ when $k<0$, or $\fint _{\partial \Omega_r} c(p)=r$ when $k\ge 0$. Then
\begin{equation}\label{omega B}
\frac{|\Omega_r|}{| \partial \Omega_r|} \ge \frac{|\mathbb B^n_k(r)|}{| \mathbb S^{n-1}_k(r)|}.
\end{equation}
In particular, if $k_1\ge k_2=k$, $\Omega_r=\mathbb B^n_{k_1}(r)$ and $r<\mathrm{diam}(M_{k_1})$, then
\begin{equation*}
h_{k_1}(r)\ge h_{k_2}(r).
\end{equation*}

Examples of $\Omega_r$ satisfying \eqref{omega B} include geodesics balls in $M$, or by rescaling a smooth domain in the Euclidean space.
This also gives an alternative proof of the Bishop-Gromov volume comparison theorem.
\begin{corollary}\label{cor: BG}
Suppose $M^n$ is a complete Riemannian manifold with $\mathrm{Ric}\ge (n-1)kg$. Let $B(p, r)$ be a geodesic ball of radius $r$ in $M$, then
$\frac{|B(p, r)|}{|\mathbb B^n_k(r)|}$ is monotonic decreasing.
\end{corollary}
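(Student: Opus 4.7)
The plan is to reduce Bishop--Gromov to a pointwise monotonicity statement for the Jacobian of $\exp_p$ divided by its model counterpart, using exactly the Riccati--Cauchy--Schwarz calculation carried out in the proof of Theorem \ref{thm: vol c}, but now run outward from the interior point $p$ rather than inward from a boundary hypersurface. Write $m := n-1$. In geodesic polar coordinates about $p$, for each $\theta \in S_pM$ let $\gamma_\theta(s) := \exp_p(s\theta)$, let $c(\theta)$ be its cut distance from $p$, and let $A(s, \theta)$ be the volume density so that $dV = A(s, \theta)\, ds\, d\theta$ on the injectivity domain.

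Setting $f_\theta(s) := A(s, \theta)^{1/m}$, the second-variation argument in the proof of Theorem \ref{thm: vol c} (now with the family of geodesic spheres $\partial B(p, s)$ playing the role of the parallel hypersurfaces $\Sigma_t$, using $\mathrm{Ric} \ge (n - 1)kg$ and the Cauchy--Schwarz inequality $|A|^2 \ge H^2/m$) gives $f_\theta'' + k f_\theta \le 0$ on $(0, c(\theta))$, exactly as in \eqref{ineq: 2var}. The only difference from the proof there is the initial condition: since $A(s, \theta) \sim s^m$ as $s \to 0^+$, one has $f_\theta(s)/s \to 1$, matching the model function $s_k$ from \eqref{eq: sk}, which solves $s_k'' + k s_k = 0$ with $s_k(s)/s \to 1$.

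Next, I would compare $f_\theta$ with $s_k$ via the Wronskian $w := f_\theta' s_k - f_\theta s_k'$: direct differentiation gives $w' = (f_\theta'' + k f_\theta)\, s_k \le 0$ and $w(0^+) = 0$, so $w \le 0$ on $(0, c(\theta))$, i.e.\ $(f_\theta/s_k)' \le 0$. Taking the $m$-th power yields that $s \mapsto A(s, \theta)/s_k(s)^m$ is nonincreasing on $(0, c(\theta))$, and extending $A(\,\cdot\,, \theta)$ by $0$ past the cut value preserves this monotonicity. Since the model density $s_k(s)^m$ is independent of $\theta$, integrating over $S_pM$ shows that $r \mapsto |\partial B(p, r)|/|\mathbb{S}^{n - 1}_k(r)|$ is nonincreasing in $r$.

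Finally, Lemma \ref{lem: dec} applied with $f(s) = |\partial B(p, s)|$ and $g(s) = |\mathbb{S}^{n - 1}_k(s)|$ promotes monotonicity of the sphere ratio to monotonicity of the ball ratio $|B(p, r)|/|\mathbb{B}^n_k(r)|$. The main point of care is the cut-locus bookkeeping: $A(\,\cdot\,, \theta)$ drops to $0$ at $s = c(\theta)$, and one must check that pointwise monotonicity in $s$ for each $\theta$ survives integration over $\theta$. This is immediate because the model density $s_k(s)^m$ is $\theta$-independent, so the integrated quotient is a $\theta$-average of nonincreasing functions; nevertheless it is the one place where a short measure-theoretic argument is required beyond the already-established machinery of Theorem \ref{thm: vol c} and Lemma \ref{lem: dec}.
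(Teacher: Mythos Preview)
Your argument is correct: it is essentially the standard textbook proof of Bishop--Gromov, carried out cleanly and with proper attention to the cut locus. However, the paper's own proof takes a different and much shorter route. The paper applies inequality \eqref{omega B} (i.e.\ Corollary \ref{cor: iso1}) directly to $\Omega_r = B(p, r)$: since the inward cut distance from each point of $\partial B(p,r)$ equals $r$, \eqref{omega B} reads $\tfrac{|B(p,r)|}{|\partial B(p,r)|} \ge \tfrac{|\mathbb B^n_k(r)|}{|\mathbb S^{n-1}_k(r)|}$, which, using $|\partial B(p,r)| = \tfrac{d}{dr}|B(p,r)|$ and the analogous identity for the model, is exactly $\tfrac{d}{dr}\log\bigl(|B(p,r)|/|\mathbb B^n_k(r)|\bigr) \le 0$. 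Thus the paper gets Bishop--Gromov in one line from its main isoperimetric inequality, which is precisely the structural point of listing it as a corollary. Your approach instead reruns the Riccati computation outward from $p$, obtains the stronger intermediate fact that $|\partial B(p,r)|/|\mathbb S^{n-1}_k(r)|$ is nonincreasing via a Wronskian, and then invokes Lemma \ref{lem: dec}; this is more self-contained and handles the cut locus more explicitly, but it does not display Bishop--Gromov as a consequence of the paper's area-to-volume ratio bound.
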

\begin{proof}
Observe that when $\Omega_r=B(p, r)$, \eqref{omega B} can be rewritten as
$\frac{d }{d r}\log \left(\frac{|B(p, r)|}{|\mathbb B^n_k(r)|}\right)\le 0$.
\end{proof}

Another corollary is a volume comparison theorem (or, in the dual sense, area comparison theorem if an upper bound of the volume is imposed instead of a lower bound of the boundary area).
\begin{corollary}\label{cor: vol}
Suppose $(\Omega^n, g)$ is a complete Riemannian manifold with smooth boundary. Assume the Ricci curvature of $\Omega$ satisfies $\mathrm{Ric}\ge (n-1)k g$. Suppose
there exists $l>0$ such that
$\mathrm{Area}(\partial \Omega)\ge\mathrm{Area}(\mathbb S^{n-1}_k(l))$ and
$c(\Omega)\ge l$.
Then
\begin{align*}
\mathrm{Vol}(\Omega)\ge
\mathrm{Vol}\left(\mathbb B^n_k (l)\right).
\end{align*}
The equality holds if and only if $\Omega$ is isometric to $\mathbb B^n_k(l)$.
\end{corollary}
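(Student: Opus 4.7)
The plan is a direct chain of inequalities combining Theorem \ref{thm: vol c} with the monotonicity of $h_k$ established in Lemma \ref{lem: mono}. The key observation is that the hypothesis $c(\Omega)\ge l$ converts the pointwise weights $h_k(c(p))$ appearing in \eqref{vol h} into a uniform lower bound $h_k(l)$, after which the boundary-area hypothesis combined with the very definition \eqref{eq: h} of $h_k$ yields the conclusion.

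First I would apply Theorem \ref{thm: vol c} to obtain
\[
\mathrm{Vol}(\Omega)\ge \int_{\partial \Omega}h_k(c(p))\,dS.
\]
When $k>0$, Lemma \ref{c} ensures $c(p)\in(0,\pi/\sqrt{k})$, so $h_k$ is defined and monotone increasing on the range where $c(p)$ takes values (Lemma \ref{lem: mono}); when $k\le 0$, $h_k$ is monotone increasing on all of $(0,\infty)$. Since $c(p)\ge c(\Omega)\ge l$, this gives $h_k(c(p))\ge h_k(l)$. Using the assumed lower bound on $\mathrm{Area}(\partial \Omega)$ together with $h_k(l)\,|\mathbb S^{n-1}_k(l)|=|\mathbb B^n_k(l)|$,
\[
\mathrm{Vol}(\Omega)\ge h_k(l)\,\mathrm{Area}(\partial \Omega)\ge h_k(l)\,\mathrm{Area}(\mathbb S^{n-1}_k(l))=\mathrm{Vol}(\mathbb B^n_k(l)).
\]

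For the rigidity statement, equality forces equality in every step. The strictness of the monotonicity of $h_k$ (which follows from the proof of Lemma \ref{lem: mono} via Lemma \ref{lem: dec}, since $(n-1)s_k'/s_k$ is \emph{strictly} decreasing on the relevant interval for every $k$) combined with $h_k(c(p))=h_k(l)$ forces $c(p)=l$ for every $p\in \partial \Omega$. In particular every point of $\Omega$ lies within distance $l$ of $\partial \Omega$, and since $\mathrm{Area}(\partial \Omega)=\mathrm{Area}(\mathbb S^{n-1}_k(l))<\infty$ the normal exponential parametrization of $\Omega$ over $\partial \Omega$ shows that $\Omega$ is compact and $\mathrm{Vol}(\Omega)<\infty$. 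Equality in \eqref{vol h} then places us in Theorem \ref{thm: vol c} part (3), which gives that $\Omega$ is isometric to $\mathbb B^n_k(l')$ for some $l'$; the identity $c(p)=l$ pins down $l'=l$. Conversely, for $\Omega=\mathbb B^n_k(l)$ every inequality above is trivially an equality.

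I do not expect a serious obstacle here — the result is essentially a repackaging of Theorem \ref{thm: vol c} using monotonicity. The only point requiring care is the verification that the monotonicity of $h_k$ is \emph{strict} on the relevant interval (so that the rigidity argument can recover $c(p)=l$ pointwise); this is where I would spend any extra time, and it is handled by revisiting the proof of Lemma \ref{lem: mono}.
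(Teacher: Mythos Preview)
Your argument is correct and is precisely the route the paper intends: Corollary \ref{cor: vol} is stated without a separate proof, being an immediate consequence of Theorem \ref{thm: vol c} combined with the monotonicity of $h_k$ (Lemma \ref{lem: mono}) and the definition \eqref{eq: h}. For the equality case the paper (in the parallel Corollary \ref{cor: iso1}) reaches the same conclusion $c(p)\equiv l$ and then invokes Kasue's rigidity theorem directly rather than passing through Theorem \ref{thm: vol c} (3), but your route is equally valid.
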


\begin{remark}
The assumption on the cut distance in Corollary \ref{cor: vol} is necessary, even if we impose more stringent conditions on the boundary.

First of all, it is easy to see that the condition on the area alone is insufficient, as we can always find an ellipsoid in $\mathbb R^n$ with arbitrarily large boundary area but arbitrarily small volume.

Now we consider another case. Suppose $(\Omega^n, g)$ has boundary $\Sigma$ such that it has Ricci curvature $\mathrm{Ric}\ge 0$ and $\Sigma$ is isometric to $\mathbb S^{n-1}$ and such that its boundary has mean curvature $H\le n-1$, it is still not true that
$\mathrm{Vol}(\Omega)\ge \mathrm{Vol}(\mathbb B^n). $

For a long and thin ellipse $\Sigma$ lying on $\mathbb R^2\subset \mathbb R^3$ with circumference $2\pi$, we can add a convex cap $\Omega$ on it in the upper half space such that $\Sigma$ is totally geodesic in $\Omega$, i.e. $k_g=0$, but it has area less than $\pi$ (because the ``thin ellipse'' can have arbitrary small area and the cap can be ``arbitrarily short'').
\end{remark}
Corollary \ref{cor: iso1} can also be combined with Bishop's area or volume comparison, or Levy-Gromov isoperimetric inequality to produce some other results, some of them are recorded in the first version of the arXiv preprint.
We note that by modifying \eqref{ineq: vol}, Corollary \ref{cor: iso1} can be easily generalized to a weighted version, which may have independent interest.
\begin{proposition}
[Weighted isoperimetric inequality]
With the same assumptions as in Corollary \ref{cor: iso1}, suppose $\phi(\rho)$ is a positive continuous function and let $\rho_{\Sigma}$ and $\rho_{\mathbb S}$ denote the distance to $\Sigma=\partial \Omega$ and to $\mathbb S^{n-1}_k(l)$ respectively. Then
\begin{align*}
\frac{\mathrm{Area}(\partial \Omega)}{\displaystyle \int_{\Omega} \phi(\rho_{\Sigma })dV}\le
\frac{\mathrm{Area}(\mathbb S^{n-1}_k(l))}{\displaystyle \int_{\mathbb B^n_k(l)} \phi(\rho_{\mathbb S})dV}.
\end{align*}
The equality holds if and only if $\Omega$ is isometric to $\mathbb B^n_k(l)$.
\end{proposition}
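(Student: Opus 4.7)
The plan is to insert the weight $\phi(\rho_\Sigma)$ into the chain of inequalities \eqref{ineq: vol} and then argue exactly as in the proof of Corollary \ref{cor: iso1}. Parametrize $\Omega\setminus\mathrm{Cut}(\partial\Omega)$ by normal coordinates $(p,t)\mapsto\exp_p(tN(p))$ from $\partial\Omega$. Since $\rho_\Sigma(\exp_p(tN(p)))=t$ for $t<c(p)$ and the cut locus of $\partial\Omega$ has measure zero,
\begin{align*}
\int_\Omega \phi(\rho_\Sigma)\,dV=\int_{\partial\Omega}\int_0^{c(p)}\phi(t)\,f_p(t)^m\,dt\,dS(p),
\end{align*}
with $m=n-1$ and $f_p=F_p^{1/m}$ as in the proof of Theorem \ref{thm: vol c}. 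Using $f_p(t)\ge \overline f_{c(p)}(t)$ (Proposition \ref{prop: 1}) and the positivity of $\phi$, one obtains the weighted analog of \eqref{ineq: vol}:
\begin{align*}
\int_\Omega \phi(\rho_\Sigma)\,dV\ge \int_{\partial\Omega}H_k^\phi(c(p))\,dS(p),\qquad H_k^\phi(c):=\int_0^c \phi(t)\,\overline f_c(t)^m\,dt.
\end{align*}
Applying the same foliation to the model ball yields the identity
$\int_{\mathbb B_k^n(l)}\phi(\rho_{\mathbb S})\,dV=|\mathbb S^{n-1}_k(l)|\,H_k^\phi(l)$, so the proposition reduces to the averaged comparison $\fint_{\partial\Omega}H_k^\phi(c(p))\,dS\ge H_k^\phi(l)$.

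Next I would establish the weighted analogs of Lemma \ref{lem: mono} and Lemma \ref{lem: h convex}. Monotonicity of $c\mapsto H_k^\phi(c)$ follows from the pointwise comparison $\overline f_{c_2}(t)\ge \overline f_{c_1}(t)$ on $[0,c_1]$ whenever $c_1<c_2$, which is itself an instance of Proposition \ref{prop: 1} applied to $f=\overline f_{c_2}$ restricted to $[0,c_1]$. For the convexity/concavity, the explicit representation $\overline f_c(t)=s_k(c-t)/s_k(c)$ gives
\begin{align*}
H_k^\phi(c)=\frac{1}{s_k(c)^m}\int_0^c \phi(t)\,s_k(c-t)^m\,dt,
\end{align*}
so one can mimic the derivative calculation of Lemma \ref{lem: convex} with $s_k^m$ replaced by the $\phi$-weighted kernel and show that $H_k^\phi$ is convex for $k\ge 0$ and concave for $k<0$, exactly as $h_k$.

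With these two structural facts, the proof concludes verbatim as in Corollary \ref{cor: iso1}. In the case $k<0$, the hypothesis $c(p)\ge l$ and monotonicity give $H_k^\phi(c(p))\ge H_k^\phi(l)$ pointwise. In the case $k\ge 0$, Jensen combined with monotonicity gives
\begin{align*}
\fint_{\partial\Omega}H_k^\phi(c(p))\,dS\ge H_k^\phi\!\Bigl(\fint_{\partial\Omega}c(p)\,dS\Bigr)\ge H_k^\phi(l).
\end{align*}
For the rigidity, equality forces each inequality in the reduction to be an equality, so $f_p\equiv \overline f_l$ and $c(p)\equiv l$; the rigidity clause of Theorem \ref{thm: vol c} then identifies $\Omega$ with $\mathbb B_k^n(l)$.

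The main obstacle is the convexity step for $H_k^\phi$ when $k>0$: the balance that makes $h_k$ convex in Lemma \ref{lem: h convex} is sensitive to the weight, and one has to confirm (via the explicit derivative computation sketched above, or a hypothesis that is implicit in the statement) that convexity is inherited by $H_k^\phi$ for every positive continuous $\phi$. Once this is verified, all remaining steps are direct translations of the unweighted proof.
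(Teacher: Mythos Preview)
Your approach—inserting the weight $\phi$ into the chain \eqref{ineq: vol} and reducing to the comparison $\fint_{\partial\Omega}H_k^\phi(c(p))\,dS\ge H_k^\phi(l)$—is exactly the modification the paper has in mind (it states only that the proposition follows ``by modifying \eqref{ineq: vol}'' and gives no further details). Your monotonicity argument for $c\mapsto H_k^\phi(c)$ via Proposition \ref{prop: 1} is correct, and it settles the pointwise hypothesis $c(\Omega)\ge l$ for every $k$.

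Your hesitation about the convexity step is justified, and in fact that step \emph{fails} for general positive $\phi$, already when $k=0$. With $n=2$ and $\phi$ a continuous approximation to a point mass at $t_0>0$, one has $H_0^\phi(c)\approx (1-t_0/c)$ for $c>t_0$, whose second derivative $-2t_0/c^3$ is negative; for $k=1$ the analogous limit is $\sin(c-t_0)/\sin c$, with second derivative $-2\sin t_0\cos c/\sin^3 c<0$ on $(t_0,\pi/2)$. Thus $H_k^\phi$ is strictly concave in these regimes and the Jensen step under the averaged hypothesis $\fint_{\partial\Omega} c(p)\,dS\ge l$ of Case \eqref{case: k>0} cannot be carried out for arbitrary weights. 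The weighted proposition should therefore be read with the pointwise hypothesis $c(\Omega)\ge l$ in all cases; under that reading only monotonicity is needed and your argument is complete, including the rigidity (equality forces $f_p\equiv\overline f_l$ and $c(p)\equiv l$, and one finishes as in Corollary \ref{cor: iso1}).
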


\subsection{Results with alternative assumptions}\label{alt}
In practice, $c(\Omega)$ is harder to estimate than the mean curvature or the second fundamental form. It turns out that if $\Omega$ is a domain in a space form, then it is possible to replace the assumption on the cut distance with assumptions on the curvature of the boundary.

In this subsection, we assume $\partial \Omega$ is compact.
For $p\in \partial \Omega$, define
$$\mathrm{Focal}(p):=\sup\{t>0: d(\exp_p(tN(p)), \cdot): \partial \Omega\to \mathbb R \textrm{ attains a local minimum at $p$}\}. $$
Let $\displaystyle \mathrm{Focal}(\Omega):=\min_{p\in\partial \Omega}\mathrm{Focal}(p)$.
Clearly, $c(\Omega)\le \mathrm{Focal}(\Omega)$. It is often easier to compute or estimate $\mathrm{Focal}(\Omega)$ than to estimate $c(\Omega)$.
Under some additional conditions, we can have $c(\Omega)=\mathrm{Focal}(\Omega)$.
Some of these conditions are discussed in \cite{H}. (In \cite{H}, $c(\Omega)$ is called the rolling radius of $\Omega$.)
We directly record these results here for the convenience of the reader.

\begin{theorem}[\cite{H} Theorem 4.3]\label{thm: H3}
If $\mathrm{Ric} \ge (n-1)k>0$ on $\Omega$ and $\mathrm{NorWid}(\Omega)\ge \frac{\pi}{\sqrt{k}}$, then
$$c(\Omega) = \mathrm{Focal}(\Omega). $$
Here, $\mathrm{NorWid}(\Omega)$ is the normal width of $\Omega$, defined by
\begin{align*}
\mathrm{NorWid}(\Omega):=\min_{p\in \partial \Omega} \sup\{t: \exp_p(s N(p)) \in \Omega\textrm{ for }0< s< t\}.
\end{align*}
\end{theorem}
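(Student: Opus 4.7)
The inequality $c(\Omega)\le\mathrm{Focal}(\Omega)$ holds by the definitions, so the content is the reverse inequality. The plan is to argue by contradiction. Suppose $c(\Omega)<\mathrm{Focal}(\Omega)$; by compactness of $\partial\Omega$, I would pick $p\in\partial\Omega$ realizing $c(p)=c(\Omega)<\mathrm{Focal}(p)$, and set $x:=\exp_p(c(p)N(p))$. Since $x$ is not a focal point of the normal geodesic $\gamma_p$, the cut at $c(p)$ must come from a second realizer: there exists $q\in\partial\Omega\setminus\{p\}$ and a unit-speed inward normal geodesic $\gamma_q$ of length $c(p)$ with $\gamma_q(c(p))=x$.

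A first-variation argument, sliding $p$ along $\partial\Omega$ in the direction of the $T_p\partial\Omega$-projection of $\gamma_p'(c(p))+\gamma_q'(c(p))$, should then show that the two incoming tangents at $x$ are antipodal; otherwise one could strictly decrease $c(\tilde p)$ at nearby boundary points $\tilde p$, contradicting the minimality of $c(p)$. Hence $\gamma_p'(c(p))=-\gamma_q'(c(p))$, and the concatenation $\sigma:=\gamma_p\cup\gamma_q^{-1}:[0,2c(p)]\to\overline{\Omega}$ is a smooth unit-speed geodesic meeting $\partial\Omega$ orthogonally at $p$ and $q$, and lying in the open $\Omega$ on $(0,2c(p))$. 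Viewed as the inward normal geodesic from $p$, it exits $\Omega$ at $q$ at parameter $2c(p)$; therefore $\mathrm{NorWid}(p)\le 2c(p)$, and the hypothesis $\mathrm{NorWid}(\Omega)\ge\pi/\sqrt{k}$ forces
\[
2c(p)\ge \pi/\sqrt{k}.
\]

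To close the contradiction I would invoke a focal-point Bonnet--Myers comparison along $\sigma$. With $\mathrm{Ric}\ge(n-1)k$, a Riccati comparison for the shape operator of the equidistant hypersurfaces along $\sigma$ forces a focal point of $\partial\Omega$ from $p$'s side at some parameter $\mathrm{Focal}(p)<\pi/\sqrt{k}\le 2c(p)$, and symmetrically from $q$'s side. Combining these with $\mathrm{Focal}(p)>c(p)$ and $\mathrm{Focal}(q)>c(q)=c(p)$ places the two focal points on $\sigma$ in the open subintervals $(c(p),\pi/\sqrt{k})$ and $(2c(p)-\pi/\sqrt{k},c(p))$ respectively, and a Jacobi-field / Morse-Index analysis of the two-sided focal structure along the single geodesic $\sigma$ is meant to furnish the desired contradiction.

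\textbf{Main obstacle.} The delicate step is the final focal-point matching: one needs a focal Bonnet--Myers bound that is insensitive to the (a priori uncontrolled) principal curvatures of $\partial\Omega$, together with a Jacobi-field comparison that pits the focal points from $p$'s and $q$'s sides against each other along the single geodesic $\sigma$. The first-variation ``smooth matching'' step also requires care, as the perturbation must lower the cut function $c(\cdot)$ on $\partial\Omega$ rather than merely the distance function from a fixed interior point to $\partial\Omega$.
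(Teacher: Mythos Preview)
The paper does not prove this theorem: it is quoted verbatim from Howard \cite{H} (``We directly record these results here for the convenience of the reader''), so there is no in-paper proof to compare against. That said, your outline is essentially the skeleton of Howard's argument --- produce a double normal of length $2c(\Omega)$ and play it off against the Ricci lower bound --- so the strategy is sound. Two of your steps, however, are not yet arguments.

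\textbf{The antipodal matching.} Your first-variation sketch is imprecise: $\gamma_p'(c(p))+\gamma_q'(c(p))$ lives in $T_xM$, not in $T_p\partial\Omega$, and ``sliding $p$'' does not track $c(\tilde p)$ in any obvious way. The clean route is geometric. Since $c(p)=c(\Omega)=\min c$ and $c(p)<\mathrm{Focal}(p),\mathrm{Focal}(q)$, the two equidistant hypersurfaces $\Sigma^p=\{\textrm{points at distance }c(p)\textrm{ along normals near }p\}$ and $\Sigma^q$ are smooth near $x$ and both bound the superlevel set $\{d(\cdot,\partial\Omega)\ge c(p)\}$. Minimality of $c$ at $p$ forces this superlevel set to pinch to a single tangent plane at $x$, so $\Sigma^p$ and $\Sigma^q$ are tangent there; hence $\gamma_p'(c(p))=-\gamma_q'(c(p))$. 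This also yields the operator inequality $S_p(c(p))+\tilde S_q(c(p))\ge 0$ between the two shape operators at $x$ (one hypersurface lies on one side of the other), which is what actually drives the final step.

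\textbf{The focal contradiction.} Your plan --- bound $\mathrm{Focal}(p)$ and $\mathrm{Focal}(q)$ separately by a ``focal Bonnet--Myers'' and look for overlap --- does not close. Under $\mathrm{Ric}\ge (n-1)k$ one only gets $\mathrm{Focal}(p)\le l_k(H_1(p))<\pi/\sqrt{k}$, which together with $c(p)\ge \pi/(2\sqrt{k})$ gives no contradiction (both focal points may sit harmlessly in $(c(p),2c(p))$). What works is a two-sided Riccati/index argument along the single geodesic $\sigma$: the touching inequality on the shape operators at the midpoint $x$, combined with the Riccati comparison from each side under $\mathrm{Ric}\ge (n-1)k$, forces the traced shape operator from one side to blow up before $c(p)$ once $2c(p)\ge \pi/\sqrt{k}$ --- i.e.\ a focal point at or before the midpoint, contradicting $c(\Omega)<\mathrm{Focal}(\Omega)$. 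Equivalently, one shows that $\sigma$ is a local minimizer of length among curves with both endpoints free on $\partial\Omega$ and then exhibits the standard $\sin(\pi t/L)$ test field (averaged over a parallel frame) to make the free--free index form negative when $L\ge\pi/\sqrt{k}$. Either way, the crux is coupling the two sides at $x$, not bounding each focal distance independently.
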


\begin{theorem}[\cite{H} Theorem 4.4]\label{thm: H4}
If either
\begin{enumerate}
\item
$\mathrm{Ric} \ge (n-1)kg$ on $\Omega$ and $H\ge (n-1)\sqrt{-k}$ for some $k<0$, or
\item
$\mathrm{Ric} \ge 0$ on $\Omega$ and $H > 0$.
\end{enumerate}
Then
$c(\Omega) = \mathrm{Focal}(\Omega)$.
\end{theorem}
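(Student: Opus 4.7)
The plan is to argue by contradiction. The inequality $c(\Omega)\le\mathrm{Focal}(\Omega)$ is immediate from the definitions: if $t<c(p)$, then $p$ realizes even the \emph{global} minimum of $d(\exp_p(tN(p)),\cdot)$ on $\partial\Omega$, a fortiori a local one. Assume now $c(\Omega)<\mathrm{Focal}(\Omega)$. By compactness of $\partial\Omega$ and lower semicontinuity of $p\mapsto c(p)$, pick $p_0\in\partial\Omega$ with $c(p_0)=c(\Omega)=:t_0<\mathrm{Focal}(p_0)$, and set $\gamma_0(s):=\exp_{p_0}(sN(p_0))$, $q_0:=\gamma_0(t_0)$. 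Because no focal point occurs on $\gamma_0([0,t_0])$, the cut at $t_0$ must be caused by a competing minimizer: there exist $p_1\in\partial\Omega\setminus\{p_0\}$ and a unit-speed minimizing geodesic $\gamma_1:[0,t_0]\to\Omega$ with $\gamma_1(0)=p_1$, $\gamma_1(t_0)=q_0$, and $\gamma_1'(0)=N(p_1)$ (orthogonality being standard for minimizing distance to a smooth hypersurface).

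Next I would propagate the mean-curvature hypothesis inward along each $\gamma_i$. Writing $H_i(s)$ for the mean curvature at $\gamma_i(s)$ of the parallel hypersurface defined by the normal exponential map at $p_i$ (smooth on $[0,t_0)$ since $t_0<\mathrm{Focal}(p_i)$), the Riccati identity
\[
H_i'(s)=\mathrm{Ric}(\gamma_i'(s),\gamma_i'(s))+|A_i(s)|^2\ge (n-1)k+\frac{H_i(s)^2}{n-1},
\]
together with $H_i(0)\ge(n-1)\sqrt{-k}$ (case~1) or $H_i(0)>0$ (case~2), keeps $H_i$ at or above the corresponding model threshold throughout $[0,t_0]$. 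Consequently each parallel foliation persists smoothly slightly past $t_0$ on its own side, so the inward normal exponential map $\Phi(p,s)=\exp_p(sN(p))$ is a local diffeomorphism at both $(p_0,t_0)$ and $(p_1,t_0)$, producing two smooth local inverses $q\mapsto(p_i^\varepsilon,s_i)$ on a neighborhood of $q_0$. This is essentially Theorem~\ref{thm: vol c}\eqref{mc comp} read in reverse.

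Finally, let $v_i:=-\gamma_i'(t_0)\in T_{q_0}M$ and let $\theta\in(0,\pi]$ be the angle between them. When $\theta<\pi$, set $w:=(v_0+v_1)/\|v_0+v_1\|$ and $q_\varepsilon:=\exp_{q_0}(\varepsilon w)$; a first-variation computation using the Jacobi-field description of $d\Phi$ gives $s_i(q_\varepsilon)=t_0-\varepsilon\cos(\theta/2)+O(\varepsilon^2)$ for $i=0,1$. Thus the normal from $p_0^\varepsilon$ of length $s_0<t_0$ reaches $q_\varepsilon$ where the competitor $p_1^\varepsilon$ realizes the same distance, so $c(p_0^\varepsilon)\le s_0<t_0$, contradicting $c(\Omega)=t_0$. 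The degenerate case $\theta=\pi$, in which $\gamma_0\cup\gamma_1$ closes up into a smooth geodesic chord of length $2t_0$ meeting $\partial\Omega$ orthogonally at both endpoints, is the main obstacle: $\cos(\theta/2)=0$ kills the first-order gain, and transverse displacements of $q_0$ move it \emph{farther} from $\partial\Omega$ to second order. I would handle it by a global second-variation/index-form computation along the whole chord, where the two mean-curvature boundary terms combine with the interior Ricci lower bound to force either a focal point strictly inside the chord (hence $\mathrm{Focal}(p_0)<2t_0$, and in fact $\le t_0$ by symmetry of the bi-normal configuration) or else the equality case of Theorem~\ref{thm: vol c}, in which $\Omega$ is already a model ball and $c(\Omega)=\mathrm{Focal}(\Omega)$ holds a priori.
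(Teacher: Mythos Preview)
The paper does not prove Theorem~\ref{thm: H4}; it is quoted verbatim from Howard~\cite{H} with the explicit remark ``We directly record these results here for the convenience of the reader.''  So there is no in-paper argument to compare against, and your task was really to reconstruct Howard's proof.

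Your overall architecture --- minimize $c$ over $\partial\Omega$, produce a second foot point $p_1$ because $t_0<\mathrm{Focal}(p_0)$, then perturb $q_0$ to contradict minimality --- is correct and is essentially Howard's.  The $\theta<\pi$ case is fine (the first-variation computation you sketch does force some $c(p_i^\varepsilon)<t_0$).  Your Riccati step is also the right use of the hypothesis: in both cases it yields $H_i(t_0)>0$, and that is exactly the quantitative output one needs.

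The gap is in the $\theta=\pi$ case.  The appeal to ``symmetry of the bi-normal configuration'' to locate a focal point at or before the midpoint is unjustified --- nothing is symmetric here.  And invoking the equality case of Theorem~\ref{thm: vol c} is off-target: that rigidity concerns saturation of the volume inequality~\eqref{vol h}, not coincidence of cut and focal distances.  The clean way to close this case is not an index-form argument along the full chord but a direct comparison \emph{at $q_0$}.  Since $t_0=c(\Omega)$, every point $\Phi(p,t_0)$ satisfies $d(\Phi(p,t_0),\partial\Omega)=t_0$; hence the two smooth parallel hypersurfaces $\Sigma_i^{t_0}$ through $q_0$ both lie in the level set $\{d(\cdot,\partial\Omega)=t_0\}$.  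They are tangent at $q_0$ with opposite inward normals, and (because $d_j\ge d(\cdot,\partial\Omega)=t_0$ on $\Sigma_{1-j}^{t_0}$) each lies on the far side of the other.  Comparing second fundamental forms of two tangent, ordered hypersurfaces at $q_0$ gives $H_0(t_0)+H_1(t_0)\le 0$, contradicting the Riccati output $H_i(t_0)>0$.  No rigidity, symmetry, or global index computation is needed.
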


\begin{theorem}[\cite{H} Theorem 4.5]\label{thm: H5}
Assume that $\Omega$ is compact and the sectional curvature of $\Omega$
satisfies $\mathrm{Sec}\ge k$ for some $k\le 0$.
Let $\lambda_i$ be the principal curvatures of $\partial \Omega$.
If either
\begin{enumerate}
\item
$k<0$ and the number of the $\lambda_i$ satisfying
$\lambda_i\ge \sqrt{-k}$ is greater than or equal to $\frac{n}{2}$,
or
\item
$k=0$ and the number of the $\lambda_i$ satisfying
$\lambda_i\ge h_0$ is greater than or equal to $\frac{n}{2}$ for some $h_0>0$.
\end{enumerate}
Then
$ c(\Omega) = \mathrm{Focal}(\Omega)$.
\end{theorem}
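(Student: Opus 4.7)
Since $c(\Omega) \le \mathrm{Focal}(\Omega)$ is immediate from the definitions, the goal is to establish the reverse inequality. I argue by contradiction, supposing $c(\Omega) < \mathrm{Focal}(\Omega)$. By the standard cut-locus dichotomy for submanifolds, a cut point of $\partial\Omega$ occurring strictly before the focal distance must be reached by at least two distinct minimizing normal geodesics; hence there exist distinct $p_1, p_2 \in \partial\Omega$ and unit-speed segments $\gamma_i\colon [0, t_0] \to \overline\Omega$ with $\gamma_i(0) = p_i$, $\gamma_i'(0) = N(p_i)$, $\gamma_1(t_0) = \gamma_2(t_0) = q$, and $t_0 = c(\Omega) < \mathrm{Focal}(\Omega)$.

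The plan is a Frankel-type second-variation argument, driven by the $\lceil n/2\rceil$-dimensional principal-curvature hypothesis. Let $V_i \subset T_{p_i}\partial\Omega$ be a subspace of dimension at least $n/2$ on which every principal curvature of $\partial\Omega$ meets the required lower bound ($\sqrt{-k}$ if $k<0$, $h_0$ if $k=0$). Parallel-transport $V_i$ along $\gamma_i$ to $\widetilde V_i \subset T_q M$. Since $\dim \widetilde V_1 + \dim \widetilde V_2 \ge n$, these subspaces either meet in a nonzero common vector $u$ or together span $T_q M$; pulling $u$ back by inverse parallel transport to unit vectors $Y_i \in V_i$ yields a coupling at $q$ of admissible variations along the two segments.

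For variations $X_i(t) = f(t)\,P_{\gamma_i,\,0\to t}(Y_i)$ with $f(0)=1$, $f(t_0)=0$, and $f$ chosen as the model Jacobi factor in $M_k$ with initial boundary shape-operator eigenvalue $\sqrt{-k}$ (respectively $h_0$), the index form along each $\gamma_i$ satisfies, via $\mathrm{Sec}\ge k$ on the bulk and the principal-curvature bound on the boundary,
\[
I_{\gamma_i}(X_i, X_i) \;\le\; \int_0^{t_0}\!\bigl(f'(t)^2 - k f(t)^2\bigr)\,dt \;-\; \mathrm{II}_{\partial\Omega}(Y_i, Y_i).
\]
Summing over $i$ with a common $f$ and the common vector $u$ produces a variation of the broken path $\gamma_1 \cup \gamma_2^{-1}$, viewed as a $\partial\Omega$-to-$\partial\Omega$ competitor through $q$, whose total second variation is nonpositive, with equality possible only in the rigid model configuration. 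Two genuinely distinct normal geodesics meeting at $q$ cannot simultaneously realize that model, so the joint index is strictly negative; combined with smoothing of the corner at $q$, this gives a shortening of the competitor. Since each $\gamma_i$ is minimizing from $\partial\Omega$ to $q$, the broken path realizes the sum of these boundary distances, and the shortening contradicts minimality of one of the two segments.

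The main obstacle is converting the non-strict Jacobi comparison into a strict one on at least one of the two segments. The dimension count $n/2 + n/2 \ge n$ is the Frankel hypothesis and enters only to locate $u$; the real technical core is verifying that simultaneous equality in the index comparisons along both $\gamma_1$ and $\gamma_2$ would force both normal tubes to be radial geodesics of the same rigid horoball or flat ball model with a common apex at $q$, which would collapse $p_1 = p_2$ and contradict distinctness. I expect the $\mathrm{Sec}\ge k$ rigidity near $\gamma_i$—propagated to the ambient metric by umbilicity of the parallel hypersurfaces, in the spirit of the equality case of Theorem \ref{thm: vol c}—to deliver this strictness.
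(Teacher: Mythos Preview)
The paper does not prove Theorem~\ref{thm: H5}; it is quoted from Howard~\cite{H}, so there is no in-paper argument to compare against. Your Frankel-type second-variation strategy is, however, essentially the one Howard uses.

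There is a genuine error in your index computation. You ask that $f(0)=1$, $f(t_0)=0$, and simultaneously that $f$ be the model Jacobi factor with initial slope $-\sqrt{-k}$ (resp.\ $-h_0$). These are incompatible: for $k<0$ that model field is $e^{-\sqrt{-k}\,t}$, which never vanishes, and for $k=0$ it is $1-h_0 t$, which is positive on $[0,t_0]$ because $t_0<\mathrm{Focal}(\Omega)\le 1/h_0$. Worse, if you insist on $f(t_0)=0$, each $X_i$ is an admissible variation of $\gamma_i$ with the endpoint $q$ \emph{fixed}; since $t_0<\mathrm{Focal}(p_i)$, the half-index forms are then strictly positive and their sum cannot be $\le 0$. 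Concretely, for $k<0$ the infimum of $\int_0^{t_0}(f'^2-kf^2)\,dt$ over $f$ with $f(0)=1,\ f(t_0)=0$ equals $\sqrt{-k}\,\coth(\sqrt{-k}\,t_0)>\sqrt{-k}$, already exceeding the available boundary contribution $\mathrm{II}_{\partial\Omega}(Y_i,Y_i)\ge\sqrt{-k}$. The whole point of the common vector $u$ is to permit $f(t_0)\ne 0$: one first argues that $\gamma_1'(t_0)=-\gamma_2'(t_0)$ (using that $q$ minimizes $d(\cdot,\partial\Omega)$ on the cut locus), so that $\sigma=\gamma_1\ast\bar\gamma_2$ is a \emph{smooth} geodesic chord, and then builds a single continuous field along $\sigma$ equal to $f(t_0)\,u$ at the midpoint. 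With $f(t)=e^{-\sqrt{-k}t}$ on each half one obtains $I(V,V)\le -2\sqrt{-k}\,e^{-2\sqrt{-k}t_0}<0$, and analogously $I(V,V)\le 2h_0(h_0 t_0-1)<0$ when $k=0$.

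Finally, your stated contradiction---``shortening \dots\ contradicts minimality of one of the two segments''---is not correct as phrased: the shortened competitor need not pass through $q$, so it says nothing about $d(q,\partial\Omega)$ or about either $\gamma_i$ as a minimizer to $q$. What the negative index must contradict is the nonnegativity of the free-boundary second variation of the smooth chord $\sigma$, and that nonnegativity has to be deduced from the two halves being focal-point-free minimizers to $q$; this step (together with the smoothness of $\sigma$ at $q$) is missing from your outline and is where the hypothesis $t_0<\mathrm{Focal}(\Omega)$ is actually consumed.
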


With the above results, we can replace the assumption in Corollary \ref{cor: iso1} on the lower bound of the cut distance with the lower bound of the focal length under some additional assumptions.

\begin{proposition}\label{prop: focal}
Suppose the sectional curvature of $\Omega$ is bounded above by $k$ and the second fundamental form of $\partial \Omega$ is bounded above by $\lambda$, where $\lambda$ satisfies \eqref{eq: lambda}. Then
$\mathrm{Focal}(\Omega)\ge l_k(\lambda)$,
where $l_k$ is given by \eqref{eq: l}.
\end{proposition}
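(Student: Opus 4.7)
The plan is a Jacobi field comparison. Fix $p \in \partial\Omega$ and a unit $X \in T_p\partial\Omega$, and let $\gamma(t) = \exp_p(tN(p))$ be the inward normal geodesic. The $\partial\Omega$-Jacobi field $J$ along $\gamma$ with $J(0) = X$ and $J'(0) = -A_p(X)$ arises as the variation field of normal geodesics starting from a curve in $\partial\Omega$ through $p$ with tangent $X$, and focal points of $\partial\Omega$ at $p$ along $\gamma$ are precisely the zeros of such $J$. Thus it suffices to show every such $J$ is non-vanishing on $(0, l_k(\lambda))$. Since the initial data are perpendicular to $\gamma'(0) = N(p)$ and the Jacobi equation preserves this, $J \perp \gamma'$ throughout.

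Set $f(t) = |J(t)|$. A direct computation from $J'' = -R(J, \gamma')\gamma'$, combined with the Cauchy--Schwarz inequality, yields
\begin{align*}
f'' = \frac{|J'|^2|J|^2 - \langle J, J'\rangle^2}{|J|^3} - K(J, \gamma')\, f \ge -k f
\end{align*}
on any interval where $f > 0$, using the hypothesis $\mathrm{Sec} \le k$. Moreover $f(0) = 1$ and $f'(0) = -A_p(X, X) \ge -\lambda$ by $A \le \lambda g$. Let $\overline f = \sigma_{k, \lambda}$, which satisfies $\overline f'' + k\overline f = 0$, $\overline f(0) = 1$, $\overline f'(0) = -\lambda$, with $\overline f > 0$ on $[0, l_k(\lambda))$ and $\overline f(l_k(\lambda)) = 0$. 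The Wronskian $W = f'\overline f - f\overline f'$ satisfies $W(0) = f'(0) + \lambda \ge 0$ and, on intervals where $f > 0$,
\begin{align*}
W' = (f''+kf)\overline f - f(\overline f'' + k\overline f) = (f''+kf)\overline f \ge 0,
\end{align*}
so $(f/\overline f)' = W/\overline f^{\,2} \ge 0$; hence $f \ge \overline f$ on any such subinterval of $[0, l_k(\lambda))$.

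A first zero $t_0 < l_k(\lambda)$ of $f$ would give the contradiction $0 = f(t_0) \ge \overline f(t_0) > 0$, so $f > 0$ on $(0, l_k(\lambda))$, proving $\mathrm{Focal}(p) \ge l_k(\lambda)$; minimizing over $p$ yields the proposition. The only mild subtlety is ensuring $f$ stays positive to justify the division by $\overline f^{\,2}$, resolved by the contradiction step. The whole argument is essentially Rauch's comparison adapted to a boundary; alternatively, it can be rephrased via the $\mathcal F_k$-convexity framework of Subsection \ref{sec convex}, noting that $f'' + kf \ge 0$ makes $f$ an $\mathcal F_k$-convex function supported from below at $t=0$ by the unique element of $\mathcal F_k$ matching $f(0)$ and $f'(0)$, which in turn dominates $\overline f$ on $[0, l_k(\lambda)]$ since their difference lies in $\mathcal F_k$ with vanishing initial value and non-negative initial derivative.
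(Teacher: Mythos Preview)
Your argument is correct and is the standard Rauch--type comparison proof that the first focal value of $\partial\Omega$ along the inward normal geodesic is at least $l_k(\lambda)$. The Jacobi field setup, the differential inequality $f'' + kf \ge 0$ for $f = |J|$, and the Wronskian (or equivalently, the $\mathcal F_k$-convexity) comparison with $\sigma_{k,\lambda}$ are all carried out cleanly.

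Note that the paper does not actually present a proof of this proposition in the body of the text; it refers the reader to the first arXiv version for details. So a line-by-line comparison is not possible. That said, given the emphasis of Section~\ref{sec convex} on $\mathcal F_k$-convexity and the way Proposition~\ref{prop: 1} and Lemma~\ref{lem: concave} are set up, the intended argument almost certainly proceeds exactly as you do: reduce to a scalar inequality $f'' + kf \ge 0$ with the correct initial data and compare with the model solution $\sigma_{k,\lambda}$. Your Wronskian version and the paper's $\mathcal F_k$-concavity version are equivalent formulations of the same Sturm comparison; you even point this out in your closing remark.

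One small point worth tightening: you identify the paper's $\mathrm{Focal}(p)$, defined as the supremum of $t$ for which $p$ is a local minimum of $d(\exp_p(tN(p)),\cdot)|_{\partial\Omega}$, with the first zero of a $\partial\Omega$-Jacobi field. This identification is standard (via the Morse index theorem for focal points: before the first focal value the index form with the submanifold boundary term is positive definite, so $p$ is a strict local minimizer; past it, a broken-Jacobi-field variation decreases length), but you assert it without comment. A one-line justification would make the argument self-contained relative to the paper's definition.
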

The reader may read the first version of the arXiv preprint for details of the proof.
We now replace the assumption in Corollary \ref{cor: iso1} with other assumptions.
Combining Corollary \ref{cor: iso1}, Proposition \ref{prop: focal}, Theorems \ref{thm: H3}, \ref{thm: H4} and \ref{thm: H5}, we have the following results.

\begin{corollary}\label{cor12}
Suppose $\Omega$ has constant curvature $k$ and either
\begin{enumerate}
\item
$k>0$ and $\mathrm{NorWid}(\Omega)\ge \frac{\pi}{\sqrt{k}}$, or
\item
$k<0$ and $H\ge (n-1)\sqrt{-k}$, or
\item
$k= 0$ and $H > 0$.
\end{enumerate}
Assume that the second fundamental form of $\partial \Omega$ is bounded above by $\Lambda$, where $\Lambda>\sqrt{-k}$ if $k\le 0$ and let $l=l_k(\Lambda)$.
Then
$h_k(l)|\partial \Omega|\le |\Omega|$.
The equality holds if and only if $\Omega$ is $\mathbb B^n_k(l)$.
\end{corollary}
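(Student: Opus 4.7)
The plan is to reduce Corollary \ref{cor12} to Corollary \ref{cor: iso1} by producing the lower bound $c(\Omega) \ge l$ on the cut distance from the hypotheses. Once that lower bound is in hand, the conclusion $h_k(l)|\partial\Omega| \le |\Omega|$ is immediate: in the case $k<0$ one simply applies case \ref{case: k<0} of Corollary \ref{cor: iso1}; in the cases $k=0$ or $k>0$ the pointwise inequality $c(p)\ge c(\Omega)\ge l$ implies $\fint_{\partial \Omega} c(p)\,dS \ge l$, so case \ref{case: k>0} of Corollary \ref{cor: iso1} applies. Note that $\mathrm{Ric} = (n-1)k\, g$ holds throughout since $\Omega$ has constant curvature $k$, so the curvature hypotheses of Corollary \ref{cor: iso1} are automatic.

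To produce the bound $c(\Omega)\ge l$, I would factor it through $\mathrm{Focal}(\Omega)$. First, in each of the three scenarios listed, one of the theorems quoted from \cite{H} forces $c(\Omega) = \mathrm{Focal}(\Omega)$: scenario (1) with $k>0$ and $\mathrm{NorWid}(\Omega) \ge \pi/\sqrt{k}$ invokes Theorem \ref{thm: H3}; scenario (2) with $k<0$ and $H\ge (n-1)\sqrt{-k}$ invokes part (1) of Theorem \ref{thm: H4}; scenario (3) with $k=0$ and $H>0$ invokes part (2) of Theorem \ref{thm: H4}. Second, the sectional curvature of $\Omega$ is bounded above by $k$ (as an equality) and the second fundamental form of $\partial\Omega$ is bounded above by $\Lambda$ satisfying \eqref{eq: lambda}, so Proposition \ref{prop: focal} yields $\mathrm{Focal}(\Omega) \ge l_k(\Lambda) = l$. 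Combining these gives $c(\Omega) \ge l$ as required.

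For the rigidity statement, equality in $h_k(l)|\partial\Omega| \le |\Omega|$ forces equality in the corresponding instance of Corollary \ref{cor: iso1}, so $\Omega$ is isometric to $\mathbb B^n_k(l)$; the converse is a direct computation from the definition \eqref{eq: h} of $h_k$. I do not anticipate any real obstacle here, since the serious analysis has already been absorbed into Corollary \ref{cor: iso1}, Proposition \ref{prop: focal}, and the cited theorems from \cite{H}. The only mild care needed is keeping track of which case of Corollary \ref{cor: iso1} is invoked (averaged cut distance versus minimum cut distance) depending on the sign of $k$, and ensuring, in the $k\ge 0$ subcases, that the strict convexity of $h_k$ used in Jensen's step is consistent with the equality discussion.
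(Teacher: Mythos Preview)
Your proposal is correct and matches the paper's approach exactly: the paper states that Corollary \ref{cor12} follows by combining Corollary \ref{cor: iso1}, Proposition \ref{prop: focal}, and Theorems \ref{thm: H3}--\ref{thm: H4}, which is precisely the reduction you have spelled out. Your care in distinguishing the $k\ge 0$ and $k<0$ subcases of Corollary \ref{cor: iso1} and in handling the equality case is appropriate and fills in the details the paper leaves implicit.
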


\begin{corollary}
Suppose $\Omega$ has constant curvature $k$.
If either
\begin{enumerate}
\item
$k<0$ and
$\left|\{i: \lambda_i(p)\ge \sqrt{-k}\}\right|\ge \frac{n}{2}$ for all $p\in \partial \Omega$, or
\item
$k=0$ and there exists $\lambda>0$ such that $ \left|\{i: \lambda_i(p)\ge \lambda\}\right|\ge \frac{n}{2}$ for all $p\in \partial \Omega$.
\end{enumerate}
Assume that the second fundamental form of $\partial \Omega$ is bounded above by $\Lambda$ as in Corollary \ref{cor12}. Then
$h_k(l)|\partial \Omega|\le |\Omega|$
where $l=l_k(\Lambda)$. The equality holds if and only if $\Omega$ is $\mathbb B^n_k(l)$.
\end{corollary}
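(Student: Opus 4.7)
The plan is to recognize that this corollary is essentially a packaging of three previously established results: Theorem \ref{thm: H5} (which converts the principal-curvature hypothesis into the identity $c(\Omega)=\mathrm{Focal}(\Omega)$), Proposition \ref{prop: focal} (which gives a quantitative lower bound on $\mathrm{Focal}(\Omega)$ from the upper bound on the second fundamental form), and Corollary \ref{cor: iso1} (the main isoperimetric inequality under a lower bound on the cut distance). All that needs to be done is to verify that the hypotheses line up and then chain the three results.

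Concretely, I would first invoke Theorem \ref{thm: H5}: since $\Omega$ has constant sectional curvature $k\le 0$ and at each boundary point at least $n/2$ of the principal curvatures exceed $\sqrt{-k}$ (respectively some $\lambda>0$ when $k=0$), we obtain $c(\Omega)=\mathrm{Focal}(\Omega)$. Next, since the ambient sectional curvature is bounded above by $k$ and the second fundamental form of $\partial\Omega$ is bounded above by $\Lambda$ (with $\Lambda$ in the admissible range of \eqref{eq: lambda}), Proposition \ref{prop: focal} yields $\mathrm{Focal}(\Omega)\ge l_k(\Lambda)=l$. Combining, $c(\Omega)\ge l$.

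With this lower bound on the cut distance in hand, I would apply Corollary \ref{cor: iso1}: the case $k<0$ applies directly when $k<0$, and when $k=0$ the pointwise bound $c(p)\ge c(\Omega)\ge l$ trivially implies $\fint_{\partial\Omega}c(p)\,dS\ge l$, so the case $k\ge 0$ of Corollary \ref{cor: iso1} applies. In both cases we conclude $\frac{|\partial\Omega|}{|\Omega|}\le \frac{|\mathbb S^{n-1}_k(l)|}{|\mathbb B^n_k(l)|}$, which is exactly $h_k(l)|\partial\Omega|\le |\Omega|$. The rigidity statement transfers immediately from the equality clause of Corollary \ref{cor: iso1}: equality forces $\Omega$ to be isometric to $\mathbb B^n_k(l)$.

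The main ``obstacle'' is purely bookkeeping rather than analytic: one must check that the assumption $\Lambda>\sqrt{-k}$ (respectively $\Lambda>0$) is compatible with the half-of-the-principal-curvatures-bounded-below hypothesis, which is automatic since the second fundamental form is bounded \emph{above} by $\Lambda$ and the half-lower-bound assumption merely specifies another constant, and to ensure that ``constant curvature $k$'' allows us to invoke both Theorem \ref{thm: H5} (which needs $\mathrm{Sec}\ge k$) and Proposition \ref{prop: focal} (which needs $\mathrm{Sec}\le k$) simultaneously. No new analysis or calculation is required beyond this alignment of hypotheses.
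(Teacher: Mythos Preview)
Your proposal is correct and follows precisely the route intended by the paper: the text introduces both this corollary and Corollary~\ref{cor12} together with the sentence ``Combining Corollary~\ref{cor: iso1}, Proposition~\ref{prop: focal}, Theorems~\ref{thm: H3}, \ref{thm: H4} and \ref{thm: H5}, we have the following results,'' and supplies no further argument. Your chaining of Theorem~\ref{thm: H5} (to obtain $c(\Omega)=\mathrm{Focal}(\Omega)$), Proposition~\ref{prop: focal} (to obtain $\mathrm{Focal}(\Omega)\ge l_k(\Lambda)$), and Corollary~\ref{cor: iso1} (to conclude the isoperimetric inequality and rigidity) is exactly this combination.
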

\section{A Heintze-Karcher-Ros type inequality and isoperimetric domains in $M_k$}\label{hk ineq}

In this section, we prove a Heintze-Karcher-Ros type inequality which is closely related to Theorem \ref{thm: vol c}.
The following inequality is implicit in \cite{HK}. We define the normalized mean curvature $H_1=\frac{H}{n-1}$.
\begin{theorem}\label{v h}
Let $\Omega^n$ be a compact Riemannian manifold with $C^2$ boundary and $\mathrm{Ric}\ge (n-1)kg$ on $\Omega$. If $k\le 0$, assume $H_1>\sqrt{-k}$ on $\partial \Omega$. Then
\begin{align}\label{ros}
\mathrm{Vol}(\Omega)
\le&\int_{\partial \Omega} h_k(l_k(H_1))dS.
\end{align}
The equality holds if and only if $\Omega$ is isometric to $\mathbb B^n_k(l)$ for some $l$.
\end{theorem}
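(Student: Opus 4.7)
The plan is a Heintze-Karcher computation from $\partial\Omega$ inward, carried out as in the proof of Theorem \ref{thm: vol c} but feeding the resulting ODE the initial-value data $f'(0)=-H_1(p)$ instead of the boundary-value data used there. Parameterize a collar of $\partial\Omega$ by the inward normal exponential map $\psi_t(p)=\exp_p(tN(p))$ for $p\in\partial\Omega$ and $0\le t\le c(p)$, and write the volume element as $F(p,t)\,dt\wedge dS$. Set $f(p,t):=F(p,t)^{1/(n-1)}$. The first variation formula gives $f(0)=1$ and $f'(0)=-H_1(p)$, while combining the second variation formula, the Cauchy-Schwarz bound $|A|^2\ge H^2/(n-1)$ and the hypothesis $\mathrm{Ric}\ge(n-1)kg$ yields $f''\le -kf$, exactly as in \eqref{ineq: 2var}.

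Let $\bar f_p(t):=\sigma_{k,H_1(p)}(t)$ be the solution of $\bar f''+k\bar f=0$ with $\bar f(0)=1$, $\bar f'(0)=-H_1(p)$ from \eqref{f bar}. The standing hypothesis $H_1>\sqrt{-k}$ when $k\le 0$ ensures $\bar f_p$ has a first positive zero at $t=l_k(H_1(p))$. A Wronskian/Sturm comparison applied to $f''+kf\le 0=\bar f_p''+k\bar f_p$ with matching initial data gives $f(p,t)\le \bar f_p(t)$ on $[0,l_k(H_1(p))]$; since $f\ge 0$ on $[0,c(p)]$ (no focal points before the cut time), this forces $c(p)\le l_k(H_1(p))$. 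Integrating,
\begin{align*}
\mathrm{Vol}(\Omega)=\int_{\partial\Omega}\int_0^{c(p)} f(p,t)^{n-1}\,dt\,dS \le \int_{\partial\Omega}\int_0^{l_k(H_1(p))}\bar f_p(t)^{n-1}\,dt\,dS =\int_{\partial\Omega} h_k(l_k(H_1(p)))\,dS,
\end{align*}
using the identity $h_k(r)=\int_0^r \bar f_r(t)^{n-1}\,dt$ noted after \eqref{hk int}.

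Equality in \eqref{ros} forces $c(p)=l_k(H_1(p))$ and $f(p,t)\equiv \bar f_p(t)$ on $[0,c(p)]$ for every $p$; together with the Cauchy-Schwarz case, this makes every parallel hypersurface $\Sigma_t$ totally umbilical and pins $\mathrm{Ric}(\partial_t,\partial_t)=(n-1)k$ along every normal geodesic. As in Theorem \ref{thm: vol c}(2), the metric on $\Omega\setminus\mathrm{Cut}(\partial\Omega)$ then takes the warped form $dt^2+\bar f_p(t)^2\,g|_{\partial\Omega}$. Setting $\lambda:=\min_{\partial\Omega}H_1$ and $l:=l_k(\lambda)$, we have $\mathrm{Ric}\ge(n-1)kg$ on $\Omega$, $H_1\ge\lambda$ on $\partial\Omega$, and $\sup_{x\in\Omega}d(x,\partial\Omega)=l$ (attained along the normal geodesic from a point minimizing $H_1$, whose focal time is exactly $l$). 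Kasue's rigidity theorem \cite[Theorem A (2)]{kasue1983ricci}, applied as in the proof of Corollary \ref{cor: iso1}, identifies $\Omega$ with $\mathbb B^n_k(l)$.

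The main obstacle is the rigidity step: because the warping profile $\bar f_p$ depends on $p$ through $H_1(p)$, neither the constancy of $H_1$ on $\partial\Omega$ nor the global isometry with a round ball is directly visible from the foliation picture, and both are imported by reducing to Kasue's theorem via the observation that $\sup d(\cdot,\partial\Omega)$ is attained and equals $l_k(\min H_1)$.
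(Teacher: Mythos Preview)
Your inequality argument is the same Heintze--Karcher computation the paper uses (the paper simply cites \cite[Cor.~3.3.2]{HK} for the bound $F(p,t)\le(c_k(t)-H_1(p)s_k(t))^{n-1}$, while you rederive it via the ODE $f''+kf\le 0$ with initial data; these are equivalent). The identity $h_k(r)=\int_0^r\bar f_r(t)^{n-1}\,dt$ and the bound $c(p)\le l_k(H_1(p))$ are also used identically.

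Your rigidity argument, however, takes a genuinely different and more economical route. The paper splits into cases: for $n>2$ it invokes \cite[Lemma~5.3~(iii)]{HK} to force $H_1$ constant and then reduces to the equality case of Corollary~\ref{cor: iso1}; for $n=2$ it argues separately on each space form via Gauss--Bonnet combined with Fenchel's theorem on $\mathbb S^2$, Ros's theorem on $\mathbb R^2$, and the Minkowski--space Fenchel theorem of \cite{ye2019closed} on $\mathbb H^2$. You bypass all of this by observing that equality gives $c(p)=l_k(H_1(p))$ for every $p$, so the inradius $\sup_{x}d(x,\partial\Omega)=\max_p c(p)=l_k(\min_p H_1)$, and then applying Kasue's rigidity \cite[Theorem~A~(2)]{kasue1983ricci} directly with $\lambda=\min_{\partial\Omega}H_1$. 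This handles all dimensions uniformly and avoids the two-dimensional case analysis entirely; the price is that the constancy of $H_1$ is obtained only \emph{a posteriori} from Kasue's conclusion rather than as an intermediate step. Note that the umbilicity and warped-product structure you derive are not actually needed for this argument---Kasue's theorem requires only $\mathrm{Ric}\ge(n-1)kg$, $H_1\ge\lambda$, and the existence of an interior point at distance $l_k(\lambda)$ from the boundary.
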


\begin{proof}
We use the notation in the proof of Theorem \ref{thm: vol c}.
By \cite[Cor. 3.3.2]{HK}, we have $F(p, t)\le (c_k(t)-H_1(p)s_k(t))^{n-1}$. On the other hand, by the second variation formula, as $c(p)$ is bounded by the focal length, we have
$c(p)\le l_k(\max k_i(p))\le l_k(H_1(p))$ (note that $l_k$ is decreasing, cf. also \cite[Theorem 0.1]{ge2015comparison}).
So
\begin{equation*}
\begin{split}
\mathrm{Vol}(\Omega)=&\int_{\partial \Omega}\int_{0}^{c(p)} F(p, t)dt dS(p)\\
\le&\int_{\partial \Omega}\int_{0}^{l_k(H_1(p))} \left(c_k(t)-H_1(p)s_k(t)\right)^{m} dt dS(p).
\end{split}
\end{equation*}
As in \eqref{hk int},
\begin{equation*}
\int_{0}^{l_k(H(p))} \left(c_k(t)-H_1(p)s_k(t)\right)^m dt
=h_k(l_k(H_1(p))).
\end{equation*}
This implies the inequality.

Assume the equality holds.
Then by the equality case of \cite[Cor. 3.3.2]{HK}, $ \partial \Omega$ is umbilical and the sectional curvatures of the planes containing the tangent to any geodesic normal to $\partial \Omega$ are equal to $k$ up to the cut point. Also, from the proof above, $c(p)=l_k(H_1(p))$ for all $p\in\partial \Omega$, and so the cut distance is also equal to the focal distance at any point on $\partial \Omega$. i.e. any normal geodesic minimizes the distance to $\partial \Omega$ up to the first focal point.

Suppose $n> 2$. By \cite[Lemma 5.3 (iii)]{HK}\footnote{The assumption that $k>0$ is not used in that result. }, $H_1$ is constant and so $c$ is constant. The equality then becomes $\frac{|\Omega|}{|\partial \Omega|}=h_k(l)$ where $l=c(\Omega)$, i.e. the equality in Corollary \ref{cor: iso1} holds. We conclude that $\Omega$ is isometric to $\mathbb B^n_k(l)$.

Now assume $n=2$ and $k>0$. Without loss of generality we can assume $k=1$. Then \eqref{ros} becomes
\begin{equation}\label{sphere ineq}
\mathrm{Area}(\Omega)\le \int _{C} \left(\sqrt{\kappa^2+1}-\kappa\right)ds
\end{equation}
where $\Omega$ is a domain with boundary $C$ and $\kappa$ is the geodesic curvature of $C$. By \cite[Cor. 3.3.2]{HK}, $\Omega$ is a domain on $\mathbb S^2$, which we regard as the unit sphere in $\mathbb R^3$.
We claim that $C=\partial \Omega$ is connected. Indeed, if $C$ is disconnected, then there exists a minimizing geodesic lying in $\Omega$ which connects the two nearest components of $C$. The midpoint of this geodesic segment $\gamma$ is then a common focal point of the two endpoints, which contradicts the minimality of $\gamma$. By Gauss-Bonnet theorem, \eqref{sphere ineq} is equivalent to $\int_C \sqrt{\kappa^2+1}ds\ge 2\pi$. Notice that if we regard $ C$ to be a space curve in $\mathbb R^3$, then its curvature is $\sqrt{\kappa^2+1}$. Fenchel's theorem (\cite[p. 399]{dc}) states that
\begin{align*}
\int_{ C}\sqrt{\kappa^2+1} \, ds \ge 2\pi
\end{align*}
and the equality holds if and only if $ C$ is a plane convex curve. This is only possible when $ C$ is a geodesic circle on the sphere.

Since the case where $k=0$ is handled by \cite[Theorem 3]{Ros1987}, it remains to consider the case $n=2$ and $k<0$. We may assume $k=-1$. Similar as before $\Omega$ is a topological disk in the hyperbolic plane and \eqref{ros} becomes $\mathrm{Area}(\Omega)\le \int_C (\kappa-\sqrt{\kappa^2-1})ds$, or equivalently by Gauss-Bonnet theorem that
\begin{align*}
\int_C \sqrt{\kappa^2-1}\, ds\le 2\pi.
\end{align*}
Note that $\sqrt{\kappa^2-1}$ is the curvature of $C$ when regarded as a closed spacelike curve in $\mathbb R^{2, 1}$, which contains $\mathbb H^2$ as the hyperboloid $\{(x_0, x_1, x_2):x_0{}^2-x_1{}^2-x_2{}^2=1, x_0>0\}$. We have $\sqrt{\kappa^2-1}>0$ by assumption, so by the corresponding Fenchel theorem \cite[Theorem 1.1]{ye2019closed} in $\mathbb R^{2, 1}$, the equality holds if and only if it is a convex curve on a spacelike plane. Hence there exists $X_0\in \mathbb H^2$ such that $\langle X_0, X\rangle =\mathrm{const. }$ for $X\in C$. Thus $C$ is a geodesic circle centered at $X_0$.

\end{proof}

\begin{corollary} \label{maximizer}
Suppose $(\Omega^n, g)$ is a complete Riemannian manifold with $C^2$ compact boundary. Assume the Ricci curvature of $\Omega$ satisfies $\mathrm{Ric}\ge (n-1)k g$ and $H_1\ge \lambda_k(l)$ on the boundary, then
\begin{align*}
\frac{\mathrm{Vol}(\Omega)}{ \mathrm{Area}(\partial \Omega)}\le \frac {\mathrm{Vol}\left(\mathbb B^n_k (l)\right)}{\mathrm{Area}(\mathbb S^{n-1}_k(l))}.
\end{align*}
The equality holds if and only if $\Omega$ is isometric to $\mathbb B^n_k(l)$.
\end{corollary}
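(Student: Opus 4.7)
The plan is to obtain this corollary as a direct consequence of Theorem \ref{v h}, using only the elementary monotonicity properties of the functions $l_k$, $\lambda_k$, and $h_k$ to bound the integrand in \eqref{ros} pointwise. Since Theorem \ref{v h} gives an integral bound $\mathrm{Vol}(\Omega)\le \int_{\partial\Omega} h_k(l_k(H_1))\,dS$, and the hypothesis $H_1\ge \lambda_k(l)$ provides exactly what is needed to replace the integrand by the constant $h_k(l)$, the result follows immediately once the monotonicity is in place.

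First I would verify the monotonicity of $l_k$. Inspecting formula \eqref{eq: lambda2}, the function $l\mapsto \lambda_k(l)$ is strictly decreasing on its domain in all three cases ($k=0$, $k>0$, $k<0$), hence its inverse $l_k$ is also strictly decreasing. Therefore the hypothesis $H_1\ge \lambda_k(l)$ gives $l_k(H_1)\le l_k(\lambda_k(l))=l$ pointwise on $\partial\Omega$. By Lemma \ref{lem: mono}, $h_k$ is increasing, so $h_k(l_k(H_1))\le h_k(l)$ on $\partial\Omega$. Combining with Theorem \ref{v h} and recalling from \eqref{eq: h} that $h_k(l)=\mathrm{Vol}(\mathbb B^n_k(l))/\mathrm{Area}(\mathbb S^{n-1}_k(l))$, I obtain
\begin{equation*}
\mathrm{Vol}(\Omega)\le \int_{\partial\Omega}h_k(l_k(H_1))\,dS\le h_k(l)\,\mathrm{Area}(\partial\Omega)=\frac{\mathrm{Vol}(\mathbb B^n_k(l))}{\mathrm{Area}(\mathbb S^{n-1}_k(l))}\,\mathrm{Area}(\partial\Omega),
\end{equation*}
which is the desired inequality.

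For the equality case, suppose $\mathrm{Vol}(\Omega)/\mathrm{Area}(\partial\Omega)=h_k(l)$. Then both inequalities above are equalities. Equality in the second step, together with strict monotonicity of $h_k\circ l_k$, forces $H_1\equiv \lambda_k(l)$ on $\partial\Omega$. Equality in Theorem \ref{v h} yields that $\Omega$ is isometric to $\mathbb B^n_k(l')$ for some $l'$; the boundary of $\mathbb B^n_k(l')$ in $M_k$ has constant normalized mean curvature $\lambda_k(l')$, so $\lambda_k(l')=\lambda_k(l)$, and the strict monotonicity of $\lambda_k$ gives $l'=l$. The converse is immediate by direct computation on $\mathbb B^n_k(l)$.

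There is no real obstacle here; the work was already done in Theorem \ref{v h}. The only point requiring a quick check is the strict monotonicity of $l_k$ (and hence of $h_k\circ l_k$), which is necessary for pinning down $H_1$ in the equality analysis and is transparent from \eqref{eq: lambda2}.
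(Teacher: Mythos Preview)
Your proof is correct and is exactly the argument the paper has in mind: the corollary is stated immediately after Theorem~\ref{v h} without proof, and the only ingredients needed are \eqref{ros}, the strict monotonicity of $l_k$ from \eqref{eq: lambda2}, and the monotonicity of $h_k$ from Lemma~\ref{lem: mono}. Your handling of the equality case, identifying $l'=l$ via the strict monotonicity of $\lambda_k$, is also the intended step.
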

In particular, among all closed embedded minimal hypersurfaces in $\mathbb S^n$, the hemisphere has the largest volume-to-area ratio (cf. \cite[Theorem 2.3]{HK}). When $k\le 0$, by the concavity of $h_k$, the assumption on $H_k$ can also be weakened to $\fint _\Sigma l_k(H_1)\le l$ where $l>\sqrt{-k}$.

As an application, by combining with the ``area-mean curvature characterization of standard spheres'' of Almgren, we give an alternative proof that a $C^2$ isoperimetric domain in $M_k$ is a ball.

\begin{theorem}\label{isop domain}
Let $\Omega$ is compact domain with $C^2$ boundary in $M_k$. Suppose $\Omega$ is a solution to the isoperimetric problem, then it is a geodesic ball.
\end{theorem}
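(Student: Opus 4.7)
The plan is to combine the first-variation characterization of isoperimetric boundaries with Theorem \ref{v h}, which supplies an upper bound on $|\Omega|/|\partial\Omega|$, and a matching lower bound from Almgren's area-mean curvature characterization of standard spheres. First I would invoke the Lagrange multiplier argument for the constrained problem of minimizing $|\partial U|$ subject to $|U|=|\Omega|$: since $\Omega$ is a $C^2$ minimizer, any volume-preserving normal variation $\varphi N$ (i.e., one with $\int_{\partial\Omega}\varphi\, dS = 0$) yields $\delta|\partial\Omega| = \int_{\partial\Omega} H\varphi\, dS = 0$, which forces $H \equiv (n-1)\lambda$ to be constant on $\partial\Omega$. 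I would then set $l := l_k(\lambda)$, so that the geodesic sphere $\partial\mathbb{B}^n_k(l)$ has the same constant mean curvature as $\partial\Omega$.

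Before applying Theorem \ref{v h} when $k \le 0$, I would need to verify $\lambda > \sqrt{-k}$. This follows from the compactness of $\Omega$: fixing an interior point $q$, the distance function $d(q, \cdot)$ attains its maximum on $\overline{\Omega}$ at some $p \in \partial\Omega$ with $r_0 := d(q, p) > 0$. At $p$ the outward normal of $\partial\Omega$ coincides with $\nabla d$, and the Hessian comparison theorem for the distance function gives $H_1(p) \ge \lambda_k(r_0) > \sqrt{-k}$. With the hypothesis verified, Theorem \ref{v h} specializes to
\[
|\Omega| \le h_k(l)\,|\partial\Omega|,
\]
with equality if and only if $\Omega$ is isometric to $\mathbb{B}^n_k(l)$.

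To conclude, I would invoke Almgren's area-mean curvature characterization to obtain the reverse inequality $|\Omega| \ge h_k(l)\,|\partial\Omega|$ for compact domains in $M_k$ with constant mean curvature boundary. In the Euclidean case this is precisely Minkowski's integral identity $(n-1)|\partial\Omega| = H\int_{\partial\Omega}\langle x,\nu\rangle\, dS$ combined with the divergence identity $\int_{\partial\Omega}\langle x,\nu\rangle\, dS = n|\Omega|$, which together yield $|\Omega| = \tfrac{1}{n\lambda}|\partial\Omega| = h_0(l)|\partial\Omega|$ for constant mean curvature boundaries. In space forms with $k \ne 0$ the analogous identities (built from conformal vector fields on $M_k$) supply the corresponding lower bound. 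Combining both inequalities forces equality in Theorem \ref{v h}, and its equality case then yields that $\Omega$ is isometric to $\mathbb{B}^n_k(l)$.

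The main obstacle in this approach is the invocation of the matching lower bound in the space-form setting. The Euclidean case follows cleanly from Minkowski's identity, but the $M_k$ analogue with the exact coefficient $h_k(l)$ matching Theorem \ref{v h} relies on the Minkowski-type identities for constant mean curvature hypersurfaces in space forms (of Reilly, Montiel--Ros, and related authors), and some care is needed to verify the identity combines with the Heintze--Karcher--Ros inequality to produce precisely $h_k(l)$. By contrast, the verification of the hypothesis $\lambda > \sqrt{-k}$ is a comparatively minor technical point handled by Hessian comparison.
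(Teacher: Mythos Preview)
Your argument has a genuine gap in the step where you claim a matching lower bound $|\Omega|\ge h_k(l)\,|\partial\Omega|$ for constant-mean-curvature domains in $M_k$ when $k\ne 0$. This inequality is simply false in general: take the Clifford torus in $\mathbb S^3$, which is minimal ($H_1=0$, so $l=\pi/2$) and bounds a solid torus $\Omega$ with $|\Omega|/|\partial\Omega|=\pi^2/(2\pi^2)=1/2$, while $h_1(\pi/2)=\pi^2/(4\pi)=\pi/4>1/2$. So no Minkowski-type identity can produce the reverse inequality you need from the CMC condition alone. More fundamentally, after the first variation you never again use the hypothesis that $\Omega$ is isoperimetric; but CMC by itself cannot force $\Omega$ to be a ball in $\mathbb S^n$, where embedded non-spherical CMC hypersurfaces exist. (The Euclidean case is a lucky accident: the Minkowski identity gives an exact equality $|\Omega|/|\partial\Omega|=1/(n\lambda)=h_0(l)$, which is why your outline works there.)

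The paper closes this gap by using the isoperimetric hypothesis a second time. Almgren's result (and Kleiner's for $k<0$) is an \emph{area} bound, $|\partial\Omega|\ge A_k(l)$ where $A_k(l)=|\mathbb S^{n-1}_k(l)|$, not a volume-to-area bound. The paper pairs this with the isoperimetric comparison $|\partial\Omega|\le A_k(r_k(v))$, where $v=|\Omega|$ and $r_k$ inverts $r\mapsto |\mathbb B^n_k(r)|$, to deduce $r_k(v)\ge l$ and hence $h_k(r_k(v))\ge h_k(l)$. Then Corollary~\ref{maximizer} gives $v/A_k(r_k(v))\le |\Omega|/|\partial\Omega|\le h_k(l)\le h_k(r_k(v))$, but the two ends are equal by the definition of $h_k$, forcing equality throughout. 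You also omit the normalization step for $k>0$ (swapping $\Omega$ with its complement to arrange $H_1\ge 0$, i.e.\ $l\le\frac12\mathrm{diam}(M_k)$), which is needed for the monotonicity of $A_k$.
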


\begin{proof}
It is known that $\partial \Omega$ has constant mean curvature (\cite[Thm. II.1.4]{chavel2001isoperimetric}), and by comparing the principal curvatures at the point of contact with the largest circumscribed sphere, we see that $H_1>\sqrt{-k}$ if $k\le 0$. If $k>0$, we can let $ H_1 =\lambda_k(l)\ge 0$, $l\le \frac{1}{2}\mathrm{diam}(M_k)$, by swapping $\Omega$ with its complement if necessary.

Let $v=|\Omega|$, $r_k$ be the inverse function of $r\mapsto |\mathbb B^n_k(r)|$ and $A_k(r):=|\mathbb S^{n-1}_k(r)|$. Since $ \Omega$ is a solution to the isoperimetric problem, $|\partial \Omega|\le A_k(r_k(v))$.
On the other hand, by \cite[Theorem 1]{almgren1986optimal} when $k\ge 0$, or \cite[Proposition 8]{kleiner1992isoperimetric}\footnote{We can extend this proof to any dimension since we are only working on $M_k$. } when $k<0$, we have $|\partial \Omega|\ge A_k(l)$. So we have $r_k(v)\ge l$, as $A_k$ is increasing on $[0, \frac{1}{2}\mathrm{diam}(M_k)]$.

By Corollary \ref{maximizer}, $\frac{|\Omega|}{|\partial
\Omega|}\le h_k(l)$. Therefore
$\frac{v}{A_k(r_k(v))}\le h_k(l)\le h_k(r_k(v))$. But this must be an equality by \eqref{eq: h}, so by Corollary \ref{maximizer}, $\Omega$ is a geodesic ball.
\end{proof}

Inspired by the Alexandrov theorem, we want to characterize domains with constant cut function.
It is generally not true that a domain with constant cut function is necessarily a ball. A counter-example is the annulus. If we allow $\partial \Omega$ to be only $C^1$ smooth, then a sausage body \cite{chernov2019sausage} is also a domain in $\mathbb R^n$ with constant cut function, which is homeomorphic to but in general different from a ball. It is interesting to know under what additional assumption that $\Omega$ is a geodesic ball.
\begin{proposition}
Suppose $\Omega$ satisfies the assumptions in Theorem \ref{thm: vol c}. Assume $c$ is constant on $\partial \Omega$ and $\lambda_k(c)|\partial \Omega|\le \int_{\partial \Omega} H_1$, then $\Omega$ is isometric to $\mathbb B^n_k(c)$.
\end{proposition}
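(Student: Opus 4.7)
The plan is to show that the hypothesis forces the mean-curvature inequality from Theorem \ref{thm: vol c} \eqref{mc comp} to be saturated pointwise, and then to sandwich the volume-to-area ratio between the bounds coming from Theorem \ref{thm: vol c} and Corollary \ref{maximizer}, so that both are equalities and Corollary \ref{maximizer} delivers the desired isometry.

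First, I would invoke Theorem \ref{thm: vol c} \eqref{mc comp}: under $\mathrm{Ric}\ge (n-1)kg$, the mean curvature satisfies $H_1(p)\le \lambda_k(c(p))$ at every $p\in\partial\Omega$. Since $c$ is constant equal to some $l$, this gives the pointwise bound $H_1(p)\le \lambda_k(l)$. Integrating over $\partial\Omega$ yields $\int_{\partial\Omega}H_1\,dS\le \lambda_k(l)|\partial\Omega|$; combined with the standing hypothesis $\lambda_k(l)|\partial\Omega|\le \int_{\partial\Omega}H_1\,dS$, we obtain equality throughout, hence $H_1\equiv \lambda_k(l)$ on $\partial\Omega$.

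Next I would verify the structural hypotheses needed to apply Corollary \ref{maximizer} with this value of $l$. The case $k>0$ is immediate. When $k\le 0$, Corollary \ref{maximizer} needs $H_1>\sqrt{-k}$; but the explicit formula \eqref{eq: lambda2} for $\lambda_k$ shows that $\lambda_k(l)=\sqrt{-k}\coth(\sqrt{-k}\,l)>\sqrt{-k}$ when $k<0$ and $\lambda_0(l)=1/l>0$ when $k=0$, so the condition is automatic. (Compactness of $\Omega$, which is also used by Corollary \ref{maximizer}, follows from completeness together with the fact that $\Omega$ lies in the tubular $l$-neighbourhood of the compact boundary.) Corollary \ref{maximizer} therefore gives the upper bound $|\Omega|/|\partial\Omega|\le h_k(l)$.

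For the matching lower bound I would apply the volume inequality \eqref{vol h} of Theorem \ref{thm: vol c}: since $c\equiv l$,
\begin{equation*}
|\Omega|\ \ge\ \int_{\partial\Omega}h_k(c(p))\,dS\ =\ h_k(l)\,|\partial\Omega|.
\end{equation*}
Together with the upper bound this forces $|\Omega|/|\partial\Omega|=h_k(l)$, i.e.\ the equality case of Corollary \ref{maximizer}, which asserts that $\Omega$ is isometric to $\mathbb B^n_k(l)=\mathbb B^n_k(c)$.

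The only delicate point I foresee is the initial averaging step, which must really be done pointwise: after reversing the inequality via integration, one concludes $H_1\equiv\lambda_k(c)$ only because the integrand $\lambda_k(c)-H_1$ is non-negative by Theorem \ref{thm: vol c} \eqref{mc comp} and has zero integral. Once this pointwise identity is in place, the rest is a straightforward combination of the two main comparison results of the paper, and no independent rigidity argument is required.
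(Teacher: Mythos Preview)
Your proof is correct and follows essentially the same route as the paper: first use Theorem \ref{thm: vol c} \eqref{mc comp} together with the integral hypothesis to force $H_1\equiv\lambda_k(c)$, then sandwich $|\Omega|/|\partial\Omega|$ between the lower bound from \eqref{vol h} and the upper bound coming from the Heintze--Karcher--Ros inequality to conclude rigidity. The only cosmetic difference is that the paper phrases the upper bound as the equality case of Theorem \ref{v h} (noting that $\int_{\partial\Omega}h_k(l_k(H_1))=h_k(c)|\partial\Omega|\le|\Omega|$ is the reverse of \eqref{ros}), whereas you invoke its corollary, Corollary \ref{maximizer}; your additional checks that $\lambda_k(l)>\sqrt{-k}$ and that $\Omega$ is compact are welcome explicit verifications of hypotheses the paper leaves implicit.
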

\begin{proof}
By Theorem \ref{thm: vol c}, $H_1\le \lambda_k(c)$ on $\partial \Omega$ and thus $\int_{\partial \Omega} H_1 \le \lambda_k(c)|\partial \Omega|$.
The assumption then implies $H_1\equiv \lambda_k(c)$. By Corollary \ref{cor: iso1}, $\int_{\partial \Omega}h_k(l_k(H_1))=\int_{\partial \Omega} h_k(c)=h_k(c)|\partial \Omega|\le |\Omega|$. But this is the reverse inequality of Theorem \ref{v h}, so $\Omega$ is isometric to $\mathbb B^n_k(c)$.
\end{proof}
\section{Isoperimetric inequalities involving the extrinsic radius}
\subsection{An inequality involving the extrinsic radius}\label{extrin}
We now turn into another isoperimetric-type inequality.
Firstly, let $M$ be a complete Riemannian manifold (without boundary). For a subset $\Omega$ in $M$, let the extrinsic radius be defined by
\begin{align*}
\mathrm{rad}_M(\Omega)
:=\inf\{r>0: \Omega\subset B(x_0, r) \textrm{ for some } x_0\in M\}
\end{align*}
where $B(x_0, r)$ is the geodesic ball centered at $x_0$ with radius $r$ in $M$. Tautologically $\mathrm{rad}_M(\Omega)<\infty$ means that $\Omega$ is contained in a geodesic ball and $\mathrm{rad}_M(\Omega)=\infty$ if $\Omega$ is not contained in any geodesic ball.

For our purpose, we also define the least average radius of $\Omega$ as follows. Define
\begin{align*}
\mathrm{avrad}_M(\Omega):=\inf\left\{ \fint_{p\in\partial \Omega} d(x_0, p)dS: x_0\in M, \;\Omega\subset B(x_0, r)\textrm{ for some }r \right\}.
\end{align*}
Obviously, we have $\mathrm{avrad}_M(\Omega)\le \mathrm{rad}_M (\Omega)$.

It turns out that if we consider all domains with the same radius and the same curvature upper bound, then the standard ball minimizes the area-to-volume ratio. This looks superficially similar to Corollary \ref{cor: iso1}, but with the inequality sign reversed, and is thus more similar to the Levy-Gromov isoperimetric inequality in spirit. The proof is quite different from and is easier than that of Corollary \ref{cor: iso1}. This type of inequality is called mixed isoperimetric-isodiametric inequality in \cite{mondino2017isoperimetric}.

\begin{theorem}\label{thm: iso2}
Let $M$ be a complete Riemannian manifold (without boundary). Suppose $(\Omega^n, g)$ is a domain in $M$ with compact piecewise $C^1$ boundary and with $\mathrm{rad}_M(\Omega)<\infty$. Assume the sectional curvature of $M$ is bounded above by $k$.
\begin{enumerate}
\item
If $k> 0$ and $\mathrm{rad}_M(\Omega)=L\le \frac{\pi}{\sqrt{k}}$, then
\begin{align}\label{ineq: iso2}
\frac{ \mathrm{Area}(\partial \Omega)}{\mathrm{Vol}(\Omega)}\ge \frac{\mathrm{Area}(\mathbb S^{n-1}_k(L))} {\mathrm{Vol}\left(\mathbb B^n_k (L)\right)}.
\end{align}
\item
If $k\le 0$ and $\mathrm{avrad}_M (\Omega)=L$, then
\begin{align*}
\frac{ \mathrm{Area}(\partial \Omega)}{\mathrm{Vol}(\Omega)}\ge \frac{\mathrm{Area}(\mathbb S^{n-1}_k(L))} {\mathrm{Vol}\left(\mathbb B^n_k (L)\right)}.
\end{align*}
\end{enumerate}
In both cases, the equality holds if and only if $\Omega$ is isometric to $\mathbb B^n_k(L)$.
\end{theorem}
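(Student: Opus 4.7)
My plan is to prove Theorem \ref{thm: iso2} by a Heintze-Karcher style argument: construct a radial vector field on $\Omega$ whose divergence is bounded below by $1$, apply the divergence theorem, and control the boundary flux by $h_k$ of the radial distance. Fix $\varepsilon>0$. In case (1), pick $x_0 \in M$ with $\Omega\subset B(x_0, L+\varepsilon)$; in case (2), pick $x_0$ with $\fint_{\partial\Omega} d(x_0, p)\,dS \le L+\varepsilon$. Such centers exist by the definitions of $\mathrm{rad}_M(\Omega)$ and $\mathrm{avrad}_M(\Omega)$. Write $r(x)=d(x_0,x)$, which is $1$-Lipschitz and smooth off $\mathrm{Cut}(x_0)$.

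The vector field I would use is $X := h_k(r)\,\nabla r$. Differentiating the defining identity $s_k(r)^{n-1} h_k(r) = \int_0^r s_k(t)^{n-1}\,dt$ gives
\[
h_k'(r) = 1 - (n-1)\,\mathrm{ct}_k(r)\,h_k(r),
\]
so, away from the cut locus,
\[
\mathrm{div}(X) = h_k'(r) + h_k(r)\,\Delta r = 1 + h_k(r)\bigl[\Delta r - (n-1)\,\mathrm{ct}_k(r)\bigr].
\]
Because the sectional curvature of $M$ is bounded above by $k$, the Laplacian comparison theorem for upper curvature bounds (the Hessian comparison $\nabla^2 r \ge \mathrm{ct}_k(r)(g-dr\otimes dr)$ traced) gives $\Delta r \ge (n-1)\mathrm{ct}_k(r)$ in the support sense. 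Since $h_k \ge 0$ on the relevant range, this yields $\mathrm{div}(X)\ge 1$ in the support sense. The divergence theorem then produces
\[
|\Omega| \le \int_\Omega \mathrm{div}(X)\,dV = \int_{\partial\Omega} \langle X,\nu\rangle\,dS \le \int_{\partial\Omega} h_k(r)\,dS,
\]
where $\nu$ is the outward unit normal and I have used $|\langle\nabla r,\nu\rangle|\le|\nabla r|=1$.

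To close the estimate, in case (1) I would use $r\le L+\varepsilon$ on $\Omega$ together with the monotonicity of $h_k$ (Lemma \ref{lem: mono}) to bound the right-hand side by $h_k(L+\varepsilon)\,|\partial\Omega|$. In case (2), the concavity of $h_k$ (Lemma \ref{lem: h convex}) together with Jensen's inequality gives
\[
\int_{\partial\Omega} h_k(r)\,dS \;\le\; |\partial\Omega|\,h_k\bigl(\fint_{\partial\Omega} r\,dS\bigr) \;\le\; h_k(L+\varepsilon)\,|\partial\Omega|,
\]
the last step using monotonicity again. Letting $\varepsilon\to 0$ and recalling $h_k(L)=|\mathbb B^n_k(L)|/|\mathbb S^{n-1}_k(L)|$ gives the stated inequality. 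For equality, the chain forces $\Delta r = (n-1)\mathrm{ct}_k(r)$ pointwise on $\Omega$, $\nabla r = \nu$ on $\partial\Omega$, and $r\equiv L$ on $\partial\Omega$ (from strict monotonicity of $h_k$ in case (1), from strict concavity when $k<0$ in case (2), and, in the $k=0$ subcase, from $\nabla r = \nu$ forcing $r$ to be locally constant on $\partial\Omega$). The rigidity in the Hessian comparison then identifies $\Omega$ with $\mathbb B^n_k(L)$.

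The main obstacle is making the divergence theorem rigorous when $X$ is only defined smoothly off $\mathrm{Cut}(x_0)$. The standard remedy exploits that $r$ is semiconcave, that $\mathrm{Cut}(x_0)$ has measure zero, and that the distributional contribution to $\Delta r$ on the cut locus has the favorable sign (the radial gradient jumps inward across $\mathrm{Cut}(x_0)$), so $\Delta r \ge (n-1)\mathrm{ct}_k(r)$ persists as a distribution and $\mathrm{div}(X)\ge 1$ may be integrated over $\Omega$. A minor secondary issue in case (1) when $L=\pi/\sqrt{k}$ is handled by a limit from $L'<\pi/\sqrt{k}$ so that $\mathrm{ct}_k(r)$ remains finite on $\Omega$.
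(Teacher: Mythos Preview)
Your approach is essentially the same as the paper's: define $X=h_k(r)\nabla r$, use the Hessian/Laplacian comparison under an upper sectional curvature bound to get $\mathrm{div}\,X\ge 1$, integrate, and then finish with monotonicity (case $k>0$) or concavity plus Jensen (case $k\le 0$) of $h_k$. The equality analysis is likewise the same.

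One comment: your discussion of the ``main obstacle'' concerning $\mathrm{Cut}(x_0)$ is unnecessary here. In this paper, $\mathrm{rad}_M(\Omega)$ and $\mathrm{avrad}_M(\Omega)$ are defined via \emph{geodesic} balls $B(x_0,r)$ (diffeomorphic images under $\exp_{x_0}$), as opposed to metric balls $\mathcal B(x_0,r)$; the distinction is made explicit in Section~\ref{cut}. Thus any admissible center $x_0$ automatically has $\Omega\cap\mathrm{Cut}(x_0)=\emptyset$, $r$ is smooth on $\overline\Omega$, and the divergence theorem applies directly. Your distributional/support-sense refinement would only be needed in the metric-ball setting of Theorem~\ref{thm: cut}, where the cut locus genuinely enters and contributes the extra $2\mathcal H^{n-1}(\mathrm{Cut}(x_0)\cap\overline\Omega)$ term. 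Apart from this (harmless) over-engineering and your making the $\varepsilon$-approximation of the center explicit, your argument matches the paper's proof.
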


\begin{proof}
We may assume $\Omega$ is contained in a geodesic ball $B$ centered at $x_0$. Let $r=r(x)=d(x_0, x)$ for $x\in M$ and $X=h_k(r)\partial_r$ defined on $B$. Recall that
\begin{equation*} \label{eq: f}
\begin{split}
h_k(r)
:=\frac{|\mathbb B^n_k(r)|}{|\mathbb S^{n-1}_k(r)|}
=\frac{\int_{0}^{r}s_k(t)^{n-1}dt}{s_k(r)^{n-1}}
\end{split}
\end{equation*}
where $s_k$ is defined by \eqref{eq: sk}.
Let $F_k(t)=s_k(t)^{n-1}$.
Then within $B$,
\begin{equation*}
\begin{split}
\mathrm{div} X
=h_k'(r)|\nabla r|^2+h_k(r)\Delta r
=&h_k'(r)+h_k(r)\Delta r\\
\ge&h_k'(r)+ h_k(r)\frac{F_k'(r)}{F_k(r)}\\
=&1-\frac{F_k'(r) \int_{0}^{r}F_k(t)dt}{F_k(r)^2} +h_k(r)\frac{F_k'(r)}{F_k(r)}\\
=&1
\end{split}
\end{equation*}
where the second line follows from Hessian comparison theorem. Note that $X$ is smooth on $\Omega$.
So by divergence theorem, if $\nu$ is the unit outward normal of $\Omega$, then
\begin{equation}\label{ineq: v hk}
\begin{split}
\mathrm{Vol}(\Omega)
\le\int_{\Omega} \mathrm{div} X dV
=\int_{\partial \Omega}\langle X, \nu\rangle dS
\le\int_{\partial \Omega}h_k(r)dS.
\end{split}
\end{equation}
In the case where $k> 0$, we can use the monotone property of $h_k$ (Lemma \ref{lem: mono}) to deduce that $\mathrm{Vol}(\Omega)\le h_k(L)\mathrm{Area}(\partial \Omega)$.
In the case where $k\le 0$, as $h_k$ is concave (Lemma \ref{lem: h convex}), we can apply the Jensen inequality to again deduce that $\mathrm{Vol}(\Omega)\le h_k(L)\mathrm{Area}(\partial \Omega)$ (with different meaning of $L$). In both cases, we have the inequality
\begin{align*}
\frac{ \mathrm{Area}(\partial \Omega)}{\mathrm{Vol}(\Omega)}\ge \frac{\mathrm{Area}(\mathbb S^{n-1}_k(L))} {\mathrm{Vol}\left(\mathbb B^n_k (L)\right)}.
\end{align*}

From the proof above, the equality holds if and only if $r=L$ on $\partial \Omega$ and the Hessian comparison theorem implies that $\Omega=\mathbb B^n_k(L)$.
\end{proof}

\begin{remark}
\begin{enumerate}
\item
Theorem \ref{thm: iso2} is not true if $B(x_0, r)$ in the definition of $\mathrm{rad}_M(\Omega)$ is changed to the metric ball centered at $x_0$ with radius $r$. Consider for example the flat torus $M=\mathbb R^2/\mathbb Z^2$. Let $\Omega_\varepsilon=M\setminus B(x_0, \varepsilon)$. Then $|\partial \Omega_\varepsilon|\to 0$ and $|\Omega_\varepsilon|\to 1$, while $\Omega_\varepsilon$ is always contained in a metric ball of radius which tends to $\frac{1}{\sqrt{2}}$ as $\varepsilon\to 0^+$ (note that $\mathrm{diam}(M)=\frac{1}{\sqrt{2}}$). On the other hand, the RHS of \eqref{ineq: iso2} tends to $2\sqrt{2}$ as $L\to \frac{1}{\sqrt{2}}$.
\item
The assumption in Theorem \ref{thm: iso2} can be weakened to an upper bound for a certain curvature integral, see \cite[Theorem 10]{kwong} for a version of Laplacian comparison in this setting.
\item
We notice that in \cite[Theorem 5.2]{mondino2017isoperimetric}, under some assumptions on the isoperimetric profile and the volumes of balls, it is proved that isoperimetric-isodiametric regions exist, i.e. there exists region $\Omega$ which attains the minimum of $\{\mathrm{Area}(\partial \Omega)h_k(\mathrm{rad}_M(\Omega)): \Omega\subset M, |\Omega|=V\}$ for fixed $V\in (0, |M|)$. They prove regularity results for these regions as well (\cite[Theorem 6.11]{mondino2017isoperimetric}).
\end{enumerate}
\end{remark}
Using Theorem \ref{thm: iso2} and the monotonicity of $h_k$, we obtain a lower bound for the Cheeger's constant.
\begin{definition}[Cheeger \cite{cheeger}]
For a compact Riemannian manifold $M$ with boundary $\partial M\neq \emptyset$, the Cheeger's constant is defined to be
$$h(M):= \inf \left\{ \frac{\mathrm{Area}(\partial \Omega)}{\mathrm{Vol}(\Omega)}\mid \Omega\Subset M \right\}. $$
\end{definition}
\begin{corollary}\label{cheeger}
Let $N$ be a complete Riemannian manifold (without boundary). Suppose $ M$ is a domain in $N$ with smooth compact boundary and with $\mathrm{rad}_N(M)=L<\infty$. Assume the sectional curvature of $N$ is bounded above by $k$ and also $L\le \frac{\pi}{\sqrt{k}}$ if $k>0$.
Then
$h(M)\ge \frac{\mathrm{Area}(\mathbb S^{n-1}_k(L))} {\mathrm{Vol}\left(\mathbb B^n_k (L)\right)} $. In particular, $\lambda_1(M)\ge \frac{1}{4h_k(L)^2}$.
\end{corollary}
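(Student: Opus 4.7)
The plan is to show that the Cheeger constant bound is essentially an immediate consequence of Theorem \ref{thm: iso2} together with the monotonicity of $h_k$ from Lemma \ref{lem: mono}.

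First I would fix an arbitrary precompact subdomain $\Omega \Subset M$ with piecewise $C^1$ boundary (the class of $\Omega$'s over which $h(M)$ is defined). Since $\mathrm{rad}_N(M)=L<\infty$, for every $\varepsilon>0$ there exists $x_0\in N$ with $M\subset B_N(x_0,L+\varepsilon)$, and therefore $\Omega\subset B_N(x_0,L+\varepsilon)$ as well. This immediately gives $\mathrm{rad}_N(\Omega)\le L+\varepsilon$, and letting $\varepsilon\downarrow 0$, $\mathrm{rad}_N(\Omega)\le L$. In particular $\mathrm{rad}_N(\Omega)\le \pi/\sqrt{k}$ when $k>0$, so the hypotheses of Theorem \ref{thm: iso2}(1) are satisfied in this case.

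Next I would apply Theorem \ref{thm: iso2} to $\Omega$ inside $N$. In the case $k>0$, using that $h_k$ is increasing on $(0,\pi/\sqrt{k})$ (Lemma \ref{lem: mono}), one has
\begin{equation*}
\frac{|\partial\Omega|}{|\Omega|}\;\ge\;\frac{1}{h_k(\mathrm{rad}_N(\Omega))}\;\ge\;\frac{1}{h_k(L)}
\;=\;\frac{\mathrm{Area}(\mathbb{S}^{n-1}_k(L))}{\mathrm{Vol}(\mathbb{B}^n_k(L))}.
\end{equation*}
In the case $k\le 0$, I would use $\mathrm{avrad}_N(\Omega)\le \mathrm{rad}_N(\Omega)\le L$ together with the monotonicity of $h_k$ on $(0,\infty)$ (Lemma \ref{lem: mono} again) and Theorem \ref{thm: iso2}(2) to obtain the same lower bound. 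Taking the infimum over admissible $\Omega$ yields $h(M)\ge \mathrm{Area}(\mathbb{S}^{n-1}_k(L))/\mathrm{Vol}(\mathbb{B}^n_k(L))=1/h_k(L)$.

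For the eigenvalue statement, I would invoke the classical Cheeger inequality $\lambda_1(M)\ge h(M)^2/4$, which then produces $\lambda_1(M)\ge 1/(4\,h_k(L)^2)$ directly. There is no real obstacle here: the only small point that deserves care is ensuring that $\mathrm{rad}_N(\Omega)\le L$ even when the infimum in the definition of $\mathrm{rad}_N(M)$ is not attained, which is handled by the $\varepsilon$-limit argument above, and checking that the monotonicity of $h_k$ used to pass from $\mathrm{rad}_N(\Omega)$ (or $\mathrm{avrad}_N(\Omega)$) to $L$ is valid in the admissible range of $L$, which is exactly the content of Lemma \ref{lem: mono}.
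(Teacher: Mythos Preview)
Your proposal is correct and matches the paper's own reasoning: the paper simply states that Corollary \ref{cheeger} follows from Theorem \ref{thm: iso2} together with the monotonicity of $h_k$, and then invokes Cheeger's inequality for the eigenvalue bound. Your write-up supplies exactly these details, including the handling of the $k\le 0$ case via $\mathrm{avrad}_N(\Omega)\le \mathrm{rad}_N(\Omega)\le L$.
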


As another application, we prove a version of Santal{\'o}-Ya{\~n}ez theorem for family of domains in the hyperbolic space and the Euclidean space. Santal{\'o} and Ya{\~n}ez \cite{average} proved that for a family $\{\Omega_t\}_{t>0} $ of compact $h$-convex (horospherically convex) domains in $\mathbb{H}^{2}$ which expands over the whole space, $\lim _{t \rightarrow \infty} \frac{ \mathrm { Area }(\Omega_t)}{ \mathrm { Length }(\partial \Omega_t)}=1$. This is in contrast with the Euclidean case, where a family of expanding convex domains has $\lim _{t \rightarrow \infty} \frac{ \mathrm { Area }(\Omega_t)}{ \mathrm { Length }(\partial \Omega_t)}=\infty$. Gallego and Revent{\'o}s \cite{asymp} showed that $h$-convexity cannot be relaxed to convexity. Borisenko and Miquel \cite{total} generalized these results to the $n$-dimensional hyperbolic space. We replace the assumption on boundary convexity to that on the cut distance.

\begin{corollary}\label{SY}
Suppose $\{\Omega_t\}_{t>0}$ is a family of compact domains with piecewise $C^2$ boundary in the $n$-dimensional space form $M_k$, where $k\le 0$.
\begin{enumerate}
\item
If $k=0$, assume $\lim_{t\to\infty}\fint_{\partial \Omega_t}c (p)dS =\infty$.
\item
If $k<0$, assume $ \lim_{t\to\infty}c (\Omega_t) =\infty$.
\end{enumerate}
Then
$\lim _{t \rightarrow \infty} \frac{ \mathrm{Area}(\partial \Omega_t)}{ \mathrm{Vol}(\Omega_t)}= \sqrt{-k} (n-1)$.
\end{corollary}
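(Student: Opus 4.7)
The plan is to sandwich $\frac{|\partial\Omega_t|}{|\Omega_t|}$ between matching upper and lower bounds whose common limit is $\sqrt{-k}(n-1)$.

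For the upper bound, I apply Corollary~\ref{cor: iso1} directly. When $k=0$ and $\fint_{\partial\Omega_t}c\to\infty$, part \ref{case: k>0} yields $\frac{|\partial\Omega_t|}{|\Omega_t|}\le \frac{1}{h_0\bigl(\fint_{\partial\Omega_t}c\bigr)}=\frac{n}{\fint_{\partial\Omega_t}c}\to 0$, which is the claimed limit since $\sqrt{-k}(n-1)=0$. When $k<0$ and $c(\Omega_t)\to\infty$, part \ref{case: k<0} yields $\frac{|\partial\Omega_t|}{|\Omega_t|}\le\frac{1}{h_k(c(\Omega_t))}$, and Lemma~\ref{lim h} shows the right-hand side converges to $\sqrt{-k}(n-1)$. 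Thus $\limsup_{t\to\infty}\frac{|\partial\Omega_t|}{|\Omega_t|}\le\sqrt{-k}(n-1)$ in both cases.

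The matching lower bound $\liminf_{t\to\infty}\frac{|\partial\Omega_t|}{|\Omega_t|}\ge\sqrt{-k}(n-1)$ is automatic when $k=0$ since the ratio is nonnegative. When $k<0$, I would invoke the classical linear isoperimetric inequality $|\partial\Omega|\ge (n-1)\sqrt{-k}\,|\Omega|$, valid for every bounded smooth domain $\Omega\subset M_k$. This is obtained in a few lines inside the space form: pick $p_0\in M_k\setminus\overline{\Omega_t}$ using the boundedness of $\Omega_t$, set $r=d(p_0,\cdot)$, and combine the exact formula $\Delta r=(n-1)\sqrt{-k}\coth(\sqrt{-k}\,r)\ge (n-1)\sqrt{-k}$ with the divergence theorem and $|\nabla r|=1$ to obtain $(n-1)\sqrt{-k}\,|\Omega_t|\le\int_{\Omega_t}\Delta r\,dV\le|\partial\Omega_t|$. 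Combining with the upper bound gives the claimed limit.

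The only conceptual point worth flagging is that one should \emph{not} try to derive this lower bound from Theorem~\ref{thm: iso2}: that theorem controls the ratio by $1/h_k(\mathrm{avrad}_{M_k}(\Omega_t))$, which is sharp only when the average extrinsic radius blows up, while our hypothesis is on the cut distance, and there is no cheap mechanism to transfer $c(\Omega_t)\to\infty$ into $\mathrm{avrad}_{M_k}(\Omega_t)\to\infty$ with enough uniformity. Importing the classical hyperbolic linear isoperimetric inequality sidesteps this and makes the argument essentially immediate.
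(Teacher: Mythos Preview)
Your argument is correct, but it diverges from the paper's for the lower bound, and your closing remark about Theorem~\ref{thm: iso2} is actually mistaken.

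For the upper bound both you and the paper invoke Corollary~\ref{cor: iso1} together with Lemma~\ref{lim h}, so there is no difference there. For the lower bound the paper \emph{does} use Theorem~\ref{thm: iso2}: it observes the elementary chain
\[
\max_{p\in\partial\Omega_t} c(p)\;\le\;\mathrm{inradius}(\Omega_t)\;\le\;\mathrm{rad}_{M_k}(\Omega_t),
\]
so that either hypothesis ($\fint_{\partial\Omega_t} c\to\infty$ when $k=0$, or $c(\Omega_t)\to\infty$ when $k<0$) forces $L_t:=\mathrm{rad}_{M_k}(\Omega_t)\to\infty$. Then Theorem~\ref{thm: iso2} (combined with monotonicity of $h_k$, so that $\mathrm{avrad}$ may be replaced by the larger $\mathrm{rad}$) gives $\frac{|\partial\Omega_t|}{|\Omega_t|}\ge\frac{1}{h_k(L_t)}\to(n-1)\sqrt{-k}$. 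So the ``cheap mechanism'' you say is missing is exactly the inequality $c(p)\le\mathrm{inradius}$, which follows because the open ball of radius $t<c(p)$ centred at $\exp_p(tN(p))$ lies inside $\Omega_t$.

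Your alternative for $k<0$---the linear isoperimetric inequality $|\partial\Omega|\ge (n-1)\sqrt{-k}\,|\Omega|$ via $\Delta r\ge (n-1)\sqrt{-k}$ and the divergence theorem---is perfectly valid and arguably more elementary, since it avoids any appeal to Theorem~\ref{thm: iso2} and gives the lower bound uniformly in $t$ rather than only in the limit. The paper's route, on the other hand, keeps the argument entirely internal to the two comparison theorems developed earlier and treats both signs of $k$ symmetrically. Either way works; just retract the claim that the Theorem~\ref{thm: iso2} approach is unavailable.
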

\begin{proof}
We have $\max_{\partial \Omega_t}c(p)\le \mathrm{inradius}(\Omega_t)\le \mathrm{rad}_{M_k}(\Omega_t)\to \infty$ as $t\to\infty$.
Let $l_t=\fint _{\partial \Omega_t}c$ if $k=0$ and $c(\Omega_t)$ if $k<0$, and $L_t=\mathrm{rad}_{M_k}(\Omega_t)$. Then by Theorem \ref{thm: iso2} and Corollary \ref{cor: iso1} (which can be seen to hold for piecewise $C^2$ domains),
$\frac{1}{h_k(L_t)}\le \frac{|\partial \Omega_t|}{|\Omega_t|}\le \frac{1}{h_k(l_t)}$.
So we have
$\lim_{t\to \infty}\frac{|\partial \Omega_t|}{|\Omega_t|}= \sqrt{-k}(n-1) $ by Lemma \ref{lim h}.
\end{proof}

\subsection{Generalizations to higher order mean curvature integrals}
\label{higher}
When $M$ is the Euclidean space or the open hemisphere, an inequality similar to \eqref{ineq: iso2} is true for the ratio of the integrals of the higher order mean curvatures of $\partial \Omega$.

Given a smooth closed hypersurface $ \Sigma \subset M^n $, let $H_j$ be the normalized $j$-th mean curvature of $\Sigma$ (w.r.t. unit outward normal if it makes sense).
Our convention is that for the sphere of radius $r$ in $\mathbb R^n$, $H_j= \frac{1}{r^j}$.
Let $I_j(\Sigma):=\int_{\Sigma}H_j dS$ for a closed hypersurface $\Sigma$ and $I_{-1}(\Sigma)=n\mathrm{Vol}(\Omega)$ where $\Omega$ is the domain bounded by $\Sigma$. (The reason for this notation is due to the Minkowski formula in $\mathbb R^n$.)

\begin{theorem}
For $k>0$, let $M_k^+$ be the open hemisphere of curvature $k$.
Suppose $ \Omega$ is a domain in $M_k^+$ with smooth compact boundary.
Let $i, j\in \mathbb Z$ such that $-1\le i<j\le n-1$. Suppose $H_j>0$ on $\partial \Omega$ and $\mathrm{rad}_{M_k^+}(\Omega)=L$. Then
\begin{align*}
\frac{I_i(\partial \Omega)}{I_j(\partial \Omega)}
\le\frac{I_i(\mathbb S^{n-1}_k(L))}{I_j(\mathbb S^{n-1}_k(L))}.
\end{align*}
The equality holds if and only if $\Omega$ is a geodesic ball of radius $L$.
\end{theorem}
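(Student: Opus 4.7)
My strategy is to reduce the general inequality to the consecutive-index case $j=i+1$ by telescoping, and then to establish that case using the Hsiung--Minkowski identity combined with the monotonicity of $s_k$ and $c_k$ on the hemisphere. Fix $x_0\in M_k^+$ with $\Omega\subset B(x_0,L)$, write $r(\cdot)=d(x_0,\cdot)$ and $V=s_k(r)\partial_r$. Since $M_k^+$ has constant curvature $k$, one has $\nabla V=c_k(r)g$, and integration by parts on $\Sigma=\partial\Omega$ against the $(m-1)$-th Newton tensor of $\partial\Omega$ yields the classical Hsiung--Minkowski identity
\begin{equation*}
\int_\Sigma c_k(r)H_{m-1}\,dS=\int_\Sigma H_m\langle V,\nu\rangle\,dS,\qquad 1\le m\le n-1.
\end{equation*}

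Interpreting ``$H_j>0$'' in the standard way that $\partial\Omega$ lies in the Garding cone $\Gamma_j^+$, the Newton--MacLaurin inequalities give $H_0,H_1,\ldots,H_j\ge 0$ pointwise on $\partial\Omega$. Since $\Omega\subset M_k^+$ forces $L\le \pi/(2\sqrt{k})$, we have $c_k(r)\ge c_k(L)\ge 0$ and $\langle V,\nu\rangle=s_k(r)\langle\partial_r,\nu\rangle\le s_k(L)$ on $\partial\Omega$. Substituting into the Minkowski identity, for each $1\le m\le j$,
\begin{equation*}
c_k(L)\,I_{m-1}(\partial\Omega)\le s_k(L)\,I_m(\partial\Omega),\qquad\text{i.e.,}\qquad\lambda_k(L)\,I_{m-1}(\partial\Omega)\le I_m(\partial\Omega).
\end{equation*}
Telescoping over $m=i+1,\ldots,j$ (for $i\ge 0$) yields $I_i(\partial\Omega)\le \lambda_k(L)^{-(j-i)}I_j(\partial\Omega)$. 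Since $H_m\equiv \lambda_k(L)^m$ on $\mathbb S^{n-1}_k(L)$, this is precisely the claimed ratio bound.

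The remaining case $i=-1$ is handled by grafting on Theorem \ref{thm: iso2} (applied in the $k>0$ setting with ambient $M_k$), which furnishes $n|\Omega|=I_{-1}(\partial\Omega)\le nh_k(L)\,I_0(\partial\Omega)$; concatenating with the chained bound $I_0\le \lambda_k(L)^{-j}I_j$ produces
\begin{equation*}
\frac{I_{-1}(\partial\Omega)}{I_j(\partial\Omega)}\le \frac{nh_k(L)}{\lambda_k(L)^j}=\frac{I_{-1}(\mathbb S^{n-1}_k(L))}{I_j(\mathbb S^{n-1}_k(L))}.
\end{equation*}
For equality: every step of the chain must be sharp, the Minkowski step forcing $\langle\partial_r,\nu\rangle\equiv 1$ and $r\equiv L$ on $\partial\Omega$ (so that $\partial\Omega$ is the geodesic sphere of radius $L$ about $x_0$), while the $i=-1$ leg additionally invokes the rigidity of Theorem \ref{thm: iso2}; in either case $\Omega=B(x_0,L)$, a geodesic ball of radius $L$.

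The main anticipated obstacle is the sign propagation in the telescoping: the literal hypothesis $H_j>0$ does not by itself force $H_{j-1},\ldots,H_1\ge 0$, so the Garding-cone interpretation of ``$H_j>0$'' is essential in order to apply $c_k(r)\ge c_k(L)$ under the integral sign in the correct direction. A smaller bookkeeping point is the strict positivity $c_k(L)>0$, which requires $L<\pi/(2\sqrt{k})$; this is inherited from $\Omega\subset M_k^+$, with the boundary case $L=\pi/(2\sqrt{k})$ being treated by a standard limiting argument if it arises.
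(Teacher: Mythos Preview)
Your argument is correct and follows the same overall architecture as the paper's proof: a telescoping chain of Minkowski-type inequalities on $\Sigma=\partial\Omega$ (requiring the G\aa rding-cone reading of ``$H_j>0$'' for sign control), together with the volume bound \eqref{ineq: v hk} from Theorem~\ref{thm: iso2} for the $i=-1$ step, and the equality analysis via $r\equiv L$, $\nu\equiv\partial_r$.

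The one genuine difference is in the identity used at each telescoping step. The paper invokes the weighted formula from \cite{kwong2016extension},
\[
\int_\Sigma H_p\,\tan^q r
=\int_\Sigma H_{p+1}\,\frac{\tan^q r}{\cos r}\langle X,\nu\rangle
-\frac{1}{(n-1-p)\binom{n-1}{p}}\int_\Sigma \frac{\tan^q r}{\sin^2 r}\bigl(q\sec^2 r+\tan^2 r\bigr)\langle T_p(X^T),X^T\rangle,
\]
then drops the Newton-tensor term (using $T_p\ge 0$, which again needs the G\aa rding cone) and bounds $\langle X,\nu\rangle\le \sin r$ to obtain the pointwise-weighted chain $\int_\Sigma H_i\le \int_\Sigma H_j\,\tan^{j-i}r$, before finally using $r\le L$. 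You instead use the unweighted Hsiung--Minkowski identity $\int_\Sigma c_k(r)H_{m-1}=\int_\Sigma H_m\langle V,\nu\rangle$ directly, bounding $c_k(r)\ge c_k(L)$ and $\langle V,\nu\rangle\le s_k(L)$ to get $\lambda_k(L)\,I_{m-1}\le I_m$. Your route is shorter and avoids the Newton-tensor machinery; the paper's route yields the marginally stronger intermediate statement with $\tan^{j-i}r$ kept inside the integral, but both collapse to the same ratio bound after estimating $r\le L$. The rigidity and $i=-1$ arguments coincide with the paper's.
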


\begin{proof}
We may assume that $k=1$ and that $\Omega$ is contained in a geodesic ball of radius $L$ centered at $p_0$ where $L<\frac{\pi}{2}$. Let $r$ be the distance to $p_0$.
Assume first that $i\ge 0$. We can follow the idea of \cite[Proposition 4.2, Corollary 4.5]{kwong2016extension} to show that for $i\le p<j$ and $q\ge 0$,
\begin{equation*}
\begin{split}
&\int_\Sigma H_p \tan ^q r\\
=& \int_\Sigma H_{p+1} \frac{\tan^{q} r}{\cos r} \langle X, \nu \rangle-\frac{1}{(n-1-p){{n-1}\choose p}}\int_\Sigma \frac{\tan^q r}{\sin^2 r}(q \sec^2 r + \tan^2 r)\langle T_p(X^T), X^T\rangle\\
\le&\int_\Sigma H_{p+1}\tan^{q+1} r,
\end{split}
\end{equation*}
where $X=\sin r\partial_r$, $X^T$ is its tangential part and $T_p$ is the $p$-th Newton's transformation (cf. \cite{kwong2016extension}).
It follows by induction that
$\int_\Sigma H_i \le \int_\Sigma H_j \tan ^{j-i}r$,
from which the result follows.
If $i=-1$, then by \eqref{ineq: v hk} and the above computation,
\begin{align*}
\mathrm{Vol}(\Omega)
\le \int_{\partial \Omega}h_1(r)
\le h_1(L)\int_{\partial \Omega}H_0
\le h_1(L)\tan^j (L)\int_{\partial \Omega}H_j
= \frac{\mathrm{Vol}(\mathbb B^n_1(L))}{I_j(\mathbb S^{n-1}_1(L))}\int_{\partial \Omega}H_j.
\end{align*}
If the equality case holds in any of the above inequalities then $r=L$ on $\partial \Omega$ and so $\Omega$ is a geodesic ball.
\end{proof}

When $M=\mathbb R^n$, Theorem \ref{thm: iso2} can be generalized as follows.
\begin{theorem}
Suppose $ \Omega$ is a domain in $\mathbb R^n$ with smooth compact boundary.
Let $i, j\in \mathbb Z$ such that $-1\le i<j\le n-1$. Suppose $H_j>0$ on $\partial \Omega$ and $\mathrm{rad}_{\mathbb R^n}(\Omega)=L$.
Then
\begin{align*}
\frac{I_i(\partial \Omega)}{I_j(\partial \Omega)}
\le\frac{I_i(\mathbb S^{n-1}(L))}{I_j(\mathbb S^{n-1}(L))}.
\end{align*}
The equality holds if and only if $\Omega$ is a ball of radius $L$.
\end{theorem}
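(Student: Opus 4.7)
The plan is to adapt the hemisphere argument given immediately above to the Euclidean case by replacing $\sin r, \cos r, \tan r$ with their flat limits. After translating so that $\Omega \subset \overline{B(0,L)}$, write $r=|x|$ and let $X = r\,\partial_r$ be the position vector field, which is smooth on all of $\mathbb R^n$; Cauchy--Schwarz yields $\langle X,\nu\rangle \le r \le L$ on $\Sigma=\partial\Omega$.

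The key step is a weighted Hsiung--Minkowski inequality: for integers $0\le p < j$ and reals $q\ge 0$,
\[
\int_\Sigma H_p\, r^q \;\le\; \int_\Sigma H_{p+1}\, r^{q+1}.
\]
I would derive this from the classical divergence identity on closed hypersurfaces in $\mathbb R^n$,
\[
\mathrm{div}_\Sigma\bigl(T_p X^T\bigr) = (n-1-p)\binom{n-1}{p}\bigl(H_p - H_{p+1}\langle X,\nu\rangle\bigr),
\]
where $T_p$ is the $p$-th Newton transformation. Multiplying by $r^q$, applying the divergence theorem on the closed $\Sigma$, and using $\nabla^\Sigma r = X^T/r$, I obtain
\[
\int_\Sigma H_p\, r^q \;=\; \int_\Sigma H_{p+1}\, r^q \langle X,\nu\rangle \;-\; \frac{q}{(n-1-p)\binom{n-1}{p}} \int_\Sigma r^{q-2}\,\langle T_p X^T, X^T\rangle.
\]
Provided $T_p$ is positive semidefinite, the last integral is nonnegative, and combining the Cauchy--Schwarz bound $\langle X,\nu\rangle \le r$ with $H_{p+1}\ge 0$ gives the claimed weighted inequality.

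I then iterate this bound from $p=i$ up to $p=j-1$, starting with $q=0$ and letting $q$ increase by one at each step, to obtain
\[
I_i(\partial\Omega) \;=\; \int_\Sigma H_i \;\le\; \int_\Sigma H_j\, r^{j-i} \;\le\; L^{j-i}\, I_j(\partial\Omega).
\]
Since $H_p = 1/L^p$ on $\mathbb S^{n-1}(L)$, one has $I_i(\mathbb S^{n-1}(L))/I_j(\mathbb S^{n-1}(L)) = L^{j-i}$, which gives the desired inequality for $i\ge 0$. For $i=-1$, I apply \eqref{ineq: v hk} from the proof of Theorem \ref{thm: iso2} (with $k=0$, so $h_0(r) = r/n$) to obtain $I_{-1}(\partial\Omega) = n\,\mathrm{Vol}(\Omega) \le \int_\Sigma r\,dS$, and then chain with the weighted inequalities starting at $p=0,\; q=1$ to deduce $I_{-1}(\partial\Omega) \le L^{j+1}\, I_j(\partial\Omega)$; this matches the sphere ratio since $I_{-1}(\mathbb S^{n-1}(L)) = n\mathrm{Vol}(\mathbb B^n(L)) = L\,|\mathbb S^{n-1}(L)|$.

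For rigidity, equality in the chain forces $r\equiv L$ on $\partial\Omega$ (strict unless $r=L$ a.e., which uses $H_j>0$); hence $\partial\Omega\subset \mathbb S^{n-1}(L)$, and since $\Omega$ is a connected open subset of $\overline{B(0,L)}$ with boundary on the sphere, $\Omega=\mathbb B^n(L)$. The main obstacle I foresee is justifying the positivity of $T_p$ for $p<j$: this is not automatic from $H_j>0$ alone, but follows from the stronger assumption that $\partial\Omega$ lies in the $j$-th Garding cone. This is precisely the same positivity tacitly used in the hemisphere version preceding this theorem, and would be invoked via the standard machinery developed in \cite{kwong2016extension}.
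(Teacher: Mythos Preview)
Your proposal is correct and takes essentially the same approach as the paper: the paper simply cites \cite[Theorem~2]{kwong2015monotone} for the key inequality $\int_{\partial\Omega} H_i \le \int_{\partial\Omega} H_j\, r^{\,j-i}$, whose proof you have reconstructed via the weighted Hsiung--Minkowski identity, and the $i=-1$ case and the rigidity argument are handled identically. The obstacle you flag about $T_p$-positivity is resolved in the standard way: a closed embedded hypersurface in $\mathbb R^n$ always has an elliptic point, so a G{\aa}rding-cone continuity argument on each component shows that $H_j>0$ forces $T_0,\dots,T_{j-1}$ to be positive definite and $H_1,\dots,H_{j-1}>0$, exactly the positivity used in \cite{kwong2015monotone} and in the hemisphere case.
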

\begin{proof}
Assume $\Omega$ is contained in the ball of radius $l$ centered at $0$ and let $r$ be the distance to $0$.
This then follows from \cite[Theorem 2]{kwong2015monotone} for $i\ge 0$, where it is proved that $\int_{\partial \Omega}H_i\le \int_{\partial \Omega} H_j r^{j-i}$. When $i=-1$, by the proof of Theorem \ref{thm: iso2}, $I_{-1}(\partial \Omega)\le \int _{\partial \Omega}H_0 r$, which is then bounded by $\int_{\partial \Omega
} H_j r^{j+1}$ by \cite[Theorem 2]{kwong2015monotone}, from which the result follows.
\end{proof}
\subsection{Isoperimetric inequality involving the cut locus}\label{cut}
We now discuss what may happen if $\Omega$ is contained in a metric ball instead of a geodesic ball. We use the notation $\mathcal B(x_0, r)$ to denote the closed metric ball centered at $x_0$ with radius $r$, i.e.
\begin{align*}
\mathcal B(x_0, r):=\{x\in M: d(x, x_0)\le r\}.
\end{align*}

\begin{theorem} \label{thm: cut}
Suppose $M$ is a complete $C^\infty$ Riemannian manifold (without boundary) such that its curvature is bounded above by $k$. Suppose $(\Omega^n, g)$ is a domain in $M$ with smooth compact boundary. Suppose $\Omega\subset \mathcal B(x_0, l)$. If $k>0$, we further assume that $l< \frac{\pi}{\sqrt k }$. Then
\begin{align*}
\mathrm{Vol}(\Omega)\le h_k(l)\left(\mathrm{Area}(\partial \Omega\setminus \mathrm{Cut}(x_0))+2 \mathcal H^{n-1}(\mathrm{Cut}(x_0)\cap \overline \Omega)\right)
\end{align*}
where $h_k$ is given by \eqref{eq: h}.
\end{theorem}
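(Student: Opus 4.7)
The plan is to adapt the proof of Theorem \ref{thm: iso2}, with the new ingredient being a careful treatment of the flux of the radial vector field across the cut locus. Write $r(x) := d(x, x_0)$ and, on the open set $D := M \setminus \mathrm{Cut}(x_0)$, set $X := h_k(r)\partial_r$. On $D$ the function $r$ is smooth, and the Hessian/Laplacian comparison theorem (valid in the range $r < \pi/\sqrt k$ when $k > 0$, which is ensured by $\Omega \subset \mathcal B(x_0, l)$ with $l < \pi/\sqrt k$) gives $\Delta r \le F_k'(r)/F_k(r)$ with $F_k(t) = s_k(t)^{n-1}$. The same algebraic manipulation as in the proof of Theorem \ref{thm: iso2} then yields $\mathrm{div}\, X \ge 1$ on $D \cap \overline \Omega$. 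Moreover, since $h_k$ is non-decreasing on $[0, l]$ by Lemma \ref{lem: mono}, one has the pointwise bound $|\langle X, \nu\rangle| \le h_k(r) \le h_k(l)$ for any unit vector $\nu$ at a point of $\overline \Omega$.

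For $\varepsilon > 0$, let $T_\varepsilon := \{x \in M : d(x, \mathrm{Cut}(x_0)) < \varepsilon\}$ and consider $\Omega_\varepsilon := \Omega \setminus \overline{T_\varepsilon}$, which is compactly contained in $D$ and, for a.e.\ $\varepsilon$, has piecewise $C^1$ boundary. Its boundary splits (up to a negligible set) into a piece lying on $\partial \Omega \setminus \overline{T_\varepsilon}$ and a piece lying on $\Omega \cap \partial T_\varepsilon$. Applying the divergence theorem on $\Omega_\varepsilon$ and using the preceding bounds yields
\begin{align*}
\mathrm{Vol}(\Omega_\varepsilon) \le \int_{\Omega_\varepsilon} \mathrm{div}\, X \, dV = \int_{\partial \Omega_\varepsilon} \langle X, \nu\rangle \, dS \le h_k(l)\Bigl( \mathcal H^{n-1}\bigl(\partial \Omega \setminus \overline{T_\varepsilon}\bigr) + \mathcal H^{n-1}\bigl(\Omega \cap \partial T_\varepsilon\bigr) \Bigr).
\end{align*}

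The final step is to pass to the limit $\varepsilon \to 0^+$. The left side converges to $\mathrm{Vol}(\Omega)$ because $\mathrm{Cut}(x_0)$ is Lebesgue-negligible, and the first boundary term converges to $\mathrm{Area}(\partial \Omega \setminus \mathrm{Cut}(x_0))$ by monotone/dominated convergence. The main obstacle, and the origin of the factor of $2$, is the estimate
\begin{align*}
\limsup_{\varepsilon \to 0^+} \mathcal H^{n-1}\bigl(\Omega \cap \partial T_\varepsilon\bigr) \le 2\, \mathcal H^{n-1}\bigl(\mathrm{Cut}(x_0) \cap \overline \Omega\bigr),
\end{align*}
i.e., that the upper $(n-1)$-dimensional Minkowski content of the cut locus is at most twice its Hausdorff measure, the factor two reflecting that each point of a two-sided $(n-1)$-stratum is approached by the tube from both sides. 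To justify it I would invoke the rectifiability results for the cut locus of Itoh--Tanaka (and Mantegazza--Mennucci in the Lipschitz setting), which decompose $\mathrm{Cut}(x_0)$ into a countable union of $C^1$ $(n-1)$-submanifolds plus a remainder of Hausdorff dimension at most $n-2$, and then compute the area of the parallel $\varepsilon$-tube stratum by stratum via the tube/coarea formula, so that each top-dimensional piece contributes area $2\mathcal H^{n-1}(\text{stratum}) + o(1)$ while the lower-dimensional remainder contributes $o(1)$. Combining these convergences yields the claimed inequality.
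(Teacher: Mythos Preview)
Your overall strategy coincides with the paper's: define $X = h_k(r)\,\partial_r$ on $M\setminus\mathrm{Cut}(x_0)$, obtain $\mathrm{div}\,X\ge 1$ from Hessian comparison, apply the divergence theorem on an exhaustion, and pass to the limit. (Minor slip: with a curvature \emph{upper} bound one has $\Delta r \ge F_k'(r)/F_k(r)$, not $\le$; your conclusion $\mathrm{div}\,X\ge 1$ is nonetheless correct.)

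The real divergence from the paper is the choice of exhaustion and, with it, the origin of the factor $2$. You excise the metric tube $T_\varepsilon$ about $\mathrm{Cut}(x_0)$ and then need
\[
\limsup_{\varepsilon\to 0^+}\mathcal H^{n-1}\bigl(\partial T_\varepsilon\cap\Omega\bigr)\le 2\,\mathcal H^{n-1}\bigl(\mathrm{Cut}(x_0)\cap\overline\Omega\bigr).
\]
This is an upper Minkowski-content statement, and your sketch does not establish it: countable $(n{-}1)$-rectifiability by itself does not force Minkowski content to coincide with Hausdorff measure, and the assertion that the Hausdorff-dimension $\le n-2$ remainder contributes $o(1)$ to $\mathcal H^{n-1}(\partial T_\varepsilon)$ does not follow from its dimension alone (a closed set of small Hausdorff dimension can still have tubes whose boundary area does not tend to zero). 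The claim may be salvageable from finer structural facts about $\mathrm{Cut}(x_0)$, but it is not the routine appeal to Itoh--Tanaka that you suggest.

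The paper sidesteps this by exhausting \emph{radially} rather than tubularly. Since the cut function $\rho:S_{x_0}M\to(0,\infty]$ is locally Lipschitz (Itoh--Tanaka), one approximates it from below by piecewise $C^1$ functions $\rho_\varepsilon$ and takes star-shaped domains $E_\varepsilon$ with $\partial E_\varepsilon=\{\exp_{x_0}(\rho_\varepsilon(v)v)\}$, setting $\Omega_\varepsilon=\Omega\cap E_\varepsilon$. The extra boundary piece is $\partial E_\varepsilon\cap\overline\Omega$, and its area converges to that of the Lipschitz graph of $\rho$ by ordinary area-convergence for Lipschitz graphs. The factor $2$ then comes from Barden--Le: off a set of Hausdorff dimension at most $n-2$, each cut point is joined to $x_0$ by exactly two minimal geodesics, so the radial graph map $v\mapsto\exp_{x_0}(\rho(v)v)$ is two-to-one there and the graph area equals $2\,\mathcal H^{n-1}(\mathrm{Cut}(x_0))$. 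No Minkowski-content estimate is needed.
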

\begin{proof}

Let $E_{x_0}=M\setminus \mathrm{Cut}(x_0)$, which is a star-shaped domain in $M$ with respect to $x_0$. Let the cut function $\rho: S_{x_0}M\to (0, \infty]$ be defined as $\rho(v)=\sup\{t>0: \exp_{x_0}(tv)\in E_{x_0} \}$, where $S_{x_0}M=\{v\in T_{x_0}M:|v|=1\}$. Then by \cite[Theorem B]{itoh2001lipschitz}, $\rho$ is locally Lipschitz on which $\rho$ is finite.
From this it is not hard to see that there exists a family of domains $E_\varepsilon\subset E_{x_0}$ with piecewise $C^1$ boundary which are star-shaped with respect to $x_0$, such that $\displaystyle E_\varepsilon\uparrow E_{x_0}$ as $\varepsilon\to 0^+$, i.e. $\displaystyle \bigcup_{\varepsilon>0}E_\varepsilon=E_{x_0}$.
Furthermore, we can assume $\partial E_\varepsilon$ to be the graph of a piecewise $C^1$ function $\rho_\varepsilon: \{v\in S_{x_0}M: \rho(v)<\infty\}\to (0, \infty) $ so that $\mathrm{Area}(\mathrm{Graph} (\rho_\varepsilon|_U))\to \mathcal H^{n-1}(\mathrm{Graph}(\rho|_U))$ as $\varepsilon\to 0^+$ for any relatively compact open set $U$.

Let $\Omega_\varepsilon=\Omega\cap E_\varepsilon$. Let $f$ be defined as in the proof of Theorem \ref{thm: iso2}, then
\begin{align*}
\mathrm{Vol}(\Omega_\varepsilon)
\le\int_{\Omega_\varepsilon}\Delta f dV
=\int_{\partial \Omega_\varepsilon}\langle \nabla f, \nu\rangle dS
=&\int_{\partial \Omega\cap \Omega_\varepsilon}\langle \nabla f, \nu\rangle dS
+\int_{\partial E_\varepsilon\cap \overline \Omega}\langle \nabla f, \nu\rangle dS\\
\le&\int_{\partial \Omega\cap \Omega_\varepsilon}f'(r)dS +\int_{\partial E_\varepsilon\cap \overline \Omega}f'(r)dS\\
=&\int_{\partial \Omega\cap \Omega_\varepsilon}h_k(r)dS +\int_{\partial E_\varepsilon\cap \overline \Omega}h_k(r)dS\\
\le& h_k(l)\mathrm{Area} (\partial \Omega\cap \Omega_\varepsilon) +h_k(l)\mathrm{Area} (\partial E_\varepsilon\cap \overline \Omega).
\end{align*}
Here we have used the fact that $h_k$ is increasing on $[0, l]$ (Lemma \ref{lem: mono}).

We claim that $\displaystyle \lim_{\varepsilon\to 0^+}\mathrm{Area}(\partial E_\varepsilon\cap\overline \Omega)\le 2 \mathcal H^{n-1}(\mathrm{Cut}(x_0)\cap \overline \Omega)$.

By \cite[p. 371, Lemma]{barden1997some} (cf. also \cite{itoh1998dimension}), modulo an $(n-2)$ Hausdorff dimensional set, $\mathrm{Cut}(x_0)$ consists of a disjoint union of smooth hypersurfaces and locally around each point on this set, there are exactly two components of $\partial E_\varepsilon$ which approach $\mathrm{Cut}(x_0)$ from two different sides (because for each point in this set there are exactly two minimal geodesics joining it and $x_0$). Thus $\mathrm{Area}(\partial E_\varepsilon\cap \overline \Omega)\to 2\mathcal {H}^{n-1}(\mathrm{Cut}(x_0)\cap \overline \Omega)$.
So taking $\varepsilon\to 0^+$ in the above inequality, we can get the result.
\end{proof}

By taking $\Omega=M$ in Theorem \ref{thm: cut}, we have the following sharp lower bound for the $(n-1)$-dimensional Hausdorff measure of the cut locus. This complements the results in \cite{itoh1998dimension}, in which it is proved that the cut locus has Hausdorff dimension at most $n-1$.

For our purpose we define the radius at $x_0\in M$ to be
$\mathrm{rad}(x_0):=\sup\{d(x, x_0): x\in M\} $.

\begin{corollary}\label{cor cut}
Suppose $M$ is an $n$-dimensional closed (compact without boundary) manifold and its curvature is bounded from above by $k$. Let $x_0\in M$. If $k>0$, we further assume that $\mathrm{rad}(x_0) <\frac{\pi}{\sqrt{k}}$. Then
\begin{align*}
\frac{\mathrm{Vol}(M)}{2 \mathcal H^{n-1}(\mathrm{Cut}(x_0)) }\le \frac{|\mathbb B^n_k(L)|}{|\mathbb S^{n-1}_k(L)|}
\end{align*}
where $L=\mathrm{rad}(x_0)$.
\end{corollary}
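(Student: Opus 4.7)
The plan is to apply Theorem \ref{thm: cut} directly, taking $\Omega = M$ itself. The hypothesis $L = \mathrm{rad}(x_0)$ gives $M \subset \mathcal{B}(x_0, L)$, so the containment requirement of Theorem \ref{thm: cut} is satisfied with $l = L$, and the condition $L < \pi/\sqrt{k}$ in the case $k>0$ is precisely what we have assumed. Since $M$ is closed we have $\partial M = \emptyset$, so the first term $\mathrm{Area}(\partial \Omega \setminus \mathrm{Cut}(x_0))$ on the right-hand side of Theorem \ref{thm: cut} vanishes, while $\mathrm{Cut}(x_0) \cap \overline{M} = \mathrm{Cut}(x_0)$. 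The theorem therefore delivers
\begin{equation*}
\mathrm{Vol}(M) \le 2\, h_k(L)\, \mathcal{H}^{n-1}(\mathrm{Cut}(x_0)),
\end{equation*}
and dividing by $2\mathcal{H}^{n-1}(\mathrm{Cut}(x_0))$ together with the definition $h_k(L) = |\mathbb{B}^n_k(L)|/|\mathbb{S}^{n-1}_k(L)|$ in \eqref{eq: h} yields the claimed inequality.

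The only point that merits a sentence of justification is that Theorem \ref{thm: cut} is stated under the assumption that $\partial \Omega$ is a smooth compact (nonempty) hypersurface, whereas here $\partial M = \emptyset$. The proof of Theorem \ref{thm: cut}, however, is based on applying the divergence theorem to $\Omega_\varepsilon = \Omega \cap E_\varepsilon$, whose boundary splits into a piece on $\partial \Omega$ and a piece on $\partial E_\varepsilon \cap \overline{\Omega}$; when $\partial \Omega = \emptyset$ the first piece simply drops out and the limiting step $\mathrm{Area}(\partial E_\varepsilon \cap \overline{\Omega}) \to 2 \mathcal{H}^{n-1}(\mathrm{Cut}(x_0) \cap \overline{\Omega})$ remains valid, so the argument goes through verbatim. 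Alternatively, one can invoke Theorem \ref{thm: cut} rigorously on the exhaustion $\Omega_\delta := M \setminus \overline{B(x_0,\delta)}$, whose boundary is a small geodesic sphere that misses $\mathrm{Cut}(x_0)$ for $\delta$ small; sending $\delta \to 0^+$ makes $\mathrm{Area}(\partial B(x_0,\delta)) \to 0$ and $\mathrm{Vol}(\Omega_\delta) \to \mathrm{Vol}(M)$, recovering the same inequality.

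Because this is essentially a direct specialization of the previously proved theorem, I anticipate no substantive obstacle; the only care needed is in handling the empty-boundary case via either of the two observations above.
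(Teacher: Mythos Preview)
Your proposal is correct and matches the paper's approach exactly: the paper simply states that the corollary follows by taking $\Omega = M$ in Theorem~\ref{thm: cut}, which is precisely what you do. Your additional remarks on handling the empty-boundary case (either by observing the proof goes through verbatim or via the exhaustion $M\setminus \overline{B(x_0,\delta)}$) are a welcome clarification that the paper itself omits.
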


We remark that the inequality is sharp. Take $M=\mathbb RP^n$, which can be modeled as the quotient of the $n$-dimensional Euclidean sphere of radius $r$ by identifying the antipodal points, which has curvature $k=1/r^2$. Clearly we can equip $M$ with the round metric. It is also easy to see that the equality case holds, as for any $x_0\in M$, $\mathcal H^{n-1}(\mathrm{Cut}(x_0))=\frac{1}{2}\left|\mathbb S^{n-1}_k\left(\frac{\pi}{2\sqrt{k}}\right)\right|$, $\mathrm{Vol}(M)=\left|\mathbb B^n_k\left(\frac{\pi}{2\sqrt{k}}\right)\right|$ and $\mathrm{rad}(x_0)=\mathrm{diam}(M)=\frac{\pi}{2\sqrt{k}}$.


\begin{thebibliography}{10}

\bibitem{almgren1986optimal}
F. ~Almgren.
\newblock Optimal isoperimetric inequalities.
\newblock {\em Indiana Univ. Math. J}, 35, 1986.

\bibitem{barden1997some}
D. Barden and H. Le.
\newblock Some consequences of the nature of the distance function on the cut locus in a Riemannian manifold.
\newblock {\em Journal of the London Mathematical Society}, 56(2):369--383, 1997.

\bibitem{total}
A Borisenko and V. Miquel.
\newblock Total curvatures of convex hypersurfaces in hyperbolic space.
\newblock {\em Illinois journal of mathematics}, 43(1):61--78, 1999.

\bibitem{calabi1958extension}
E. Calabi.
\newblock An extension of E. Hopf’s maximum principle with an application to Riemannian geometry.
\newblock {\em Duke Mathematical Journal}, 25(1):45--56, 1958.

\bibitem{chavel2001isoperimetric}
I. Chavel.
\newblock {\em Isoperimetric inequalities: differential geometric and analytic perspectives}, volume 145. \newblock Cambridge University Press, 2001.

\bibitem{cheeger}
J. Cheeger.
\textit{A lower bound for the smallest eigenvalue of the Laplacian},
Problems in analysis, Princeton Univ. Press, 195--199, 1970

\bibitem{chernov2019sausage}
R. Chernov, K. Drach, and K. Tatarko.
\newblock A sausage body is a unique solution for a reverse isoperimetric
problem.
\newblock {\em Advances in Mathematics}, 353:431--445, 2019.

\bibitem{clement1953generalized}
P. A. Clement.
\newblock Generalized convexity and surfaces of negative curvature.
\newblock {\em Pacific Journal of Mathematics}, 3(2):333--368, 1953.

\bibitem{daicomparison}
X. Dai and G. Wei.
\newblock Comparison geometry for Ricci curvature.
\newblock {\em Preprint}.

\bibitem{dc}
M.P. Do~Carmo.
\newblock {\em {Differential geometry of curves and surfaces}}, volume~2.
\newblock Prentice-Hall Englewood Cliffs, NJ:, 1976.

\bibitem{asymp}
E. ~Gallego and A. Revent{\'o}s.
\newblock Asymptotic behavior of convex sets in the hyperbolic plane.
\newblock {\em Journal of Differential Geometry}, 21(1):63--72, 1985.

\bibitem{ge2015comparison}
J. Ge.
\newblock Comparison theorems for manifolds with mean convex boundary.
\newblock {\em Communications in Contemporary Mathematics}, 17(05):1550010,
2015.

\bibitem{G}
M. Gromov.
\newblock{Paul Levy's isoperimetric inequality},
\newblock{Prepublication I.H.E.S. }, 1980.

\bibitem{HK}
E. Heintze and H. Karcher.
\newblock A general comparison theorem with applications to volume estimates
for submanifolds.
\newblock {\em Ann. Sci. Ecole Norm. Sup}, 11(4):451--470, 1978.

\bibitem{H}
R. ~Howard.
\newblock {Blaschke's rolling theorem for manifolds with boundary}.
\newblock {\em Manuscripta Mathematica}, 99(4):471--483, 1999.

\bibitem{itoh1998dimension}
J. Itoh and M. Tanaka.
\newblock The dimension of a cut locus on a smooth Riemannian manifold.
\newblock {\em Tohoku Mathematical Journal, Second Series}, 50(4):571--575,
1998.

\bibitem{itoh2001lipschitz}
J. Itoh and M. Tanaka.
\newblock The Lipschitz continuity of the distance function to the cut locus.
\newblock {\em Transactions of the American Mathematical Society},
353(1):21--40, 2001.

\bibitem{kasue1983ricci}
A. Kasue.
\newblock Ricci curvature, geodesics and some geometric properties of
Riemannian manifolds with boundary.
\newblock {\em Journal of the Mathematical Society of Japan}, 35(1):117--131,
1983.

\bibitem{kleiner1992isoperimetric}
B. Kleiner.
\newblock An isoperimetric comparison theorem.
\newblock {\em Inventiones Mathematicae}, 108(1):37--47, 1992.

\bibitem{kwong}
K-K. Kwong.
\newblock Quantitative comparison theorems in Riemannian and K{\"a}hler geometry.
\newblock {\em arXiv preprint arXiv:1904.08595}, 2019.

\bibitem{kwong2015monotone}
K-K. Kwong and P. Miao.
\newblock Monotone quantities involving a weighted $\sigma_k$ integral along inverse curvature flows.
\newblock {\em Communications in Contemporary Mathematics}, 17(05):1550014, 2015.

\bibitem{kwong2016extension}
K-K. Kwong.
\newblock An extension of Hsiung--Minkowski formulas and some applications.
\newblock {\em Journal of Geometric Analysis}, 26(1):1--23, 2016.

\bibitem{li2014sharp}
M. M-c. Li.
\newblock A sharp comparison theorem for compact manifolds with mean convex
boundary.
\newblock {\em Journal of Geometric Analysis}, 24(3):1490--1496, 2014.

\bibitem{li1993lecture}
P. Li.
\newblock {\em Lecture notes on geometric analysis}.

\bibitem{ye2019closed}
X. Ma and N. Ye.
\newblock Closed strong spacelike curves, Fenchel theorem and Plateau problem in the 3-dimensional Minkowski space.
\newblock {\em Chinese Annals of Mathematics, Series B}, 40(2):217--226, 2019.

\bibitem{meyer1989toponogov}
W. Meyer.
\newblock Toponogov’s theorem and applications.
\newblock {\em Lecture Notes, Trieste}, 1989.

\bibitem{mondino2017isoperimetric}
A. Mondino, E. Spadaro,
\newblock On an isoperimetric-isodiametric inequality.
\newblock {\em Analysis \& PDE}, 10(1):95--126, 2017.

\bibitem{peixoto1949generalized}
M. ~M. Peixoto.
\newblock Generalized convex functions and second order differential
inequalities.
\newblock {\em Bulletin of the American Mathematical Society}, 55(6):563--572, 1949.

\bibitem{petersen2006riemannian}
P. Petersen.
\newblock {\em Riemannian geometry}, volume 171.
\newblock Springer, 2006.

\bibitem{Ros1987}
A. Ros.
\textit{Compact hypersurfaces with constant higher order mean curvatures}, Rev. Mat. Iberoamericana \textbf{3} (1987), 447--453.

\bibitem{average}
L. A. ~Santal{\'o} and I. ~Ya{\~n}ez.
\newblock Averages for polygons formed by random lines in Euclidean and
hyperbolic planes.
\newblock {\em Journal of Applied Probability}, 9(1):140--157, 1972.

\end{thebibliography}
\end{document}